\newcommand{\rlg}{\breve\delta}
\newcommand{\oo}{{\overline{1}}}
\newcommand{\ood}{{\overline{1}}}
\newcommand{\eev}{{\overline{0}}}
\newcommand{\sseq}{\subseteq}
\renewcommand{\oline}{\overline}
\renewcommand{\subeq}{\subseteq} 
\renewcommand{\la}{\langle}
\renewcommand{\ra}{\rangle}
\newcommand\Evol{\mathop{\rm Evol}\nolimits}
\renewcommand{\mlabel}{\label}
\begin{document} 

%%%%%%%%%%%%%%%%%%%%%%%%%%

\title{Differentiable vectors and unitary representations of 
Fr\'echet--Lie supergroups}

\author{Karl--Hermann Neeb
\begin{footnote}{
Department Mathematical,
FAU Erlangen-N\"urnberg,
Cauerstra\ss e 11, 91058 Erlangen, Deutschland,
\texttt{karl-hermann.neeb@math.uni-erlangen.de}
}
\end{footnote}
\and Hadi Salmasian\begin{footnote}
{
Department of Mathematics and Statistics,
University of Ottawa, 585 King Edward Ave., Ottawa, ON K1N 6N5,
Canada,
\texttt{hsalmasi@uottawa.ca}
}
\end{footnote}
\begin{footnote}{
The second author was supported by an NSERC Discovery Grant and  the Emerging Field Program at Universit\" at Erlangen--N\"urnberg.
}\end{footnote}
}

\maketitle

\begin{abstract}  A locally convex 
Lie group $G$ has the Trotter property if, for every $x_1, x_2 \in \g$, 
\[ \exp_G(t(x_1 + x_2)) = \lim_{n \to \infty} 
\Big(\exp_G\Big(\frac{t}{n}x_1\Big)\exp_G\Big(\frac{t}{n}x_2\Big)\Big)^n \] 
holds uniformly on compact subsets of $\R$. All locally exponential Lie groups have this property, but also 
groups of automorphisms of principal bundles over compact smooth manifolds. A key result 
of the present article is that, if $G$ has the Trotter property, $\pi \: G \to \GL(V)$ is a continuous representation 
of $G$ on a locally convex space, 
and  $v \in V$  is a vector such that
$\oline{\dd\pi}(x)v := \frac{d}{dt}|_{t=0} \pi(\exp _G(tx))v$ exists for every 
$x \in \g$, then the map $\g \to V, x \mapsto \oline{\dd\pi}(x)v$ is linear. 

Using this result we conclude that, 
for a representation of a locally exponential Fr\'echet--Lie group 
$G$ on a metrizable locally convex space, 
the space of $\cC^k$-vectors coincides with the 
common domain of the $k$-fold products of the operators 
$\oline{\dd\pi}(x)$. For unitary representations on Hilbert spaces, the assumption 
of local exponentiality can be weakened to the Trotter property. 

As an application, we show that for smooth (resp., analytic) unitary representations of Fr\'echet--Lie supergroups $(G,\g)$ where $G$ has 
the Trotter property, the common domain of the operators of $\g = \g_\eev \oplus \g_\ood$ 
can always be extended to the space of smooth (resp., analytic) vectors for~$G$. \\
{\em Keywords:} infinite dimensional Lie group, representation, 
differentiable vector, smooth vector, analytic vector, derived representation, 
Lie supergroup, Trotter property.\\
{\em MSC2000:} 22E65, 22E45, 17B65.  
\end{abstract}

%{\bf This is {\tt frechetsuper.tex; \datum}} 

\section*{Introduction}

Let $G$ be a Lie group modeled on a 
locally convex space. Assume that
$G$ has a (smooth) exponential function
$\exp_G \: \L(G) = \g \to G$, so that every smooth one-parameter group 
of $G$ is of the form 
$\gamma_x(t): = \exp_G(tx)$ for some $x \in \L(G)$. For every 
continuous representation $\pi \: G \to \GL(V)$ of $G$ on a locally convex space 
$V$, set
$$\cD_x := \Big\{ v \in V \: \derat0 \pi(\exp_G (tx))v \ \mbox{ exists } \Big\}.$$ 
Thus, $\cD_x$ is the domain 
of the infinitesimal generator 
$$
\oline{\dd\pi}(x)v := \derat0 \pi(\exp_G(tx))v
$$ 
of the one-parameter group $\pi(\exp_G(tx))$. 
Let
 $\cD^1 := \bigcap_{x \in \g} \cD_x$ be the common
 domain  of these operators. 
Clearly, each vector $v$ with a continuously differentiable orbit map (a $\cC^1$-vector) 
is contained in $\cD^1$, but the converse problem turns out to be a tricky question. 
The main difficulty is to establish that, for $v \in \cD^1$, 
the map 
\[ \omega_v \: \g \to V, \quad x \mapsto \oline{\dd\pi}(x)v \] 
is linear and, if this is the case, and $\omega_v$ is continuous, 
to show that this implies that $v$ is a $\cC^1$-vector. As we have seen in 
\cite{Ne10}, the latter problem can be solved rather easily if $G$ is locally 
exponential, but in practice this assumption appears rather strong because 
it is not satisfied for groups of diffeomorphisms. 
With respect to the linearity of $\omega_v$, 
we managed to show in \cite[Thm.~2.8]{Ne10} that 
$\omega_v$ is linear if $G$ is a Banach--Lie group and $V$ is a Banach space. 
To achieve this result, we had to build on quite involved 
results of Neklyudov (\cite{Nek08}). 

In the present note we obtain much more satisfactory solutions to both 
problems with rather direct proofs. The key to our new approach is the 
recent paper \cite{BB11} by I.~and D.~Belti\c{t}\u{a}, where 
they address the linearity problem for $\omega_v$ in the context of 
topological groups. In Section~\ref{sec:s2}, we show that their approach 
can actually be carried much further to obtain our 
Theorem~\ref{thm:3.4}, which asserts that for a Lie group 
$G$ with the Trotter property, and a continuous function 
$\phi:G\to V$ (where $V$ is a locally convex space),
if 
the derivatives $D_x \phi(g) := \frac{d}{dt}|_{t = 0} 
 \phi(g\gamma_x(t))$ with respect to all 
 one-parameter 
groups $\gamma_x$, $x \in \g$, are continuous maps, 
then the map $x \mapsto D_x\phi$ is linear. 

That a Lie group $G$ has the {\it Trotter property} means that, for $x_1, x_2 \in \g$, 
\[ \exp_G(t(x_1 + x_2)) = \lim_{n \to \infty} 
\Big(\exp_G\Big(\frac{t}{n}x_1\Big)\exp_G\Big(\frac{t}{n}x_2\Big)\Big)^n \] 
holds uniformly on compact subsets of $\R$. 
The main advantage of this property 
is that the class of Lie groups with this property contains 
all locally exponential Lie groups (hence all Banach--Lie groups), 
groups of automorphisms of principal bundles over
compact smooth manifolds (in particular, diffeomorphism groups of  compact smooth manifolds), 
and direct limit Lie groups 
(see Section \ref{sec:examples}). 

Theorem~\ref{thm:3.4} turns out to be just the right tool to deal with differentiable 
vectors in continuous representations of Lie groups. 
Combining it with the techniques 
developed in \cite{Ne10}, we show in Section~\ref{sec:3} that, for a continuous representation 
of a locally exponential Fr\'echet--Lie group on a metrizable locally convex space,
$\cD^1$ coincides with the space of $\cC^1$-vectors, and there is a similar 
characterization of $\cC^k$-vectors (Theorem~\ref{thm:2.9}). 
This already generalizes the corresponding Banach results from \cite{Ne10} considerably 
with substantially simpler proofs.  

In Section~\ref{sec:4} we turn to the special class of 
unitary representations which are differentiable 
in the sense that the space of $\cC^1$-vectors is dense. 
For these we can weaken the assumption of  
local exponentiality to the Trotter property. Namely, for any Fr\'echet--Lie group $G$ 
with the Trotter property,
we obtain
the natural characterization of the space 
of $\cC^k$-vectors as the common domain of the $k$-fold products of the operators 
$\oline{\dd\pi}(x)$, $x \in \g$ (Theorem~\ref{thm:2.1}). 

In Section~\ref{sec:5} we apply all this to unitary representations 
of Lie supergroups $(G,\g)$, which we consider as a pair consisting of a 
Lie superalgebra $\g=\g_\eev\oplus\g_\ood$ and a Lie group $G$ whose Lie algebra is the even part 
$\g_\eev$ of $\g$ (see \cite{CCTV06}). A crucial difficulty in dealing with unitary representations 
of Lie supergroups is the specification of the common domain 
of the operators corresponding to the odd part $\g_\ood$ (see \cite{CCTV06} and \cite{MNS11} for a detailed discussion). 
For the large class of Fr\'echet--Lie supergroups where $G$ has the Trotter property, 
we roughly show that, if the representation of $G$ is smooth, resp., analytic, 
the common domain of the operators of $\g = \g_\eev \oplus \g_\ood$ 
can always be extended to the space of smooth, resp., analytic vectors for~$G$. 
This generalizes the respective stability results for Banach--Lie supergroups from \cite{MNS11} and 
for finite-dimensional Lie supergroups from \cite{CCTV06}.  
We thus obtain a natural context for a global unitary representation theory 
for Lie supergroups modeled on Fr\'echet spaces, which applies in particular 
to diffeomorphism groups and gauge groups.

\tableofcontents

\section{Locally convex Lie groups} \mlabel{sec:1}

In this section we briefly recall the basic concepts related 
to infinite-dimensional Lie groups. Throughout these notes all topological 
groups and vector spaces are assumed to be Hausdorff.

\begin{defn} (a) 
Let $E$ and $F$ be locally convex spaces, $U
\subeq E$ open and $f \: U \to F$ a map. Then the {\it derivative
  of $f$ at $x$ in the direction $h$} is defined as 
$$ \dd f(x)(h) := (\partial_h f)(x) := \derat0 f(x + t h) 
= \lim_{t \to 0} \frac{1}{t}(f(x+th) -f(x)) $$
whenever it exists. The function $f$ is called {\it differentiable at
  $x$} if $\dd f(x)(h)$ exists for all $h \in E$. It is called {\it
  continuously differentiable}, if it is differentiable at all
points of $U$ and 
$$ \dd f \: U \times E \to F, \quad (x,h) \mapsto \dd f(x)(h) $$
is a continuous map. Note that this implies that the maps 
$\dd f(x)$ are linear (cf.\ \cite[Lemma~2.2.14]{GN12}). 
The map $f$ is called a {\it $\cC^k$-map}, $k \in \N \cup \{\infty\}$, 
if it is continuous, the iterated directional derivatives 
$$ \dd^{j}f(x)(h_1,\ldots, h_j)
:= (\partial_{h_j} \cdots \partial_{h_1}f)(x) $$
exist for all integers $1\leq j \leq k$, $x \in U$ and $h_1,\ldots, h_j \in E$, 
and all maps $\dd^j f \: U \times E^j \to F$ are continuous. 
As usual, $\cC^\infty$-maps are called {\it smooth}. 

  (b) If $E$ and $F$ are complex locally convex spaces, then $f$ is 
called {\it complex analytic} if it is continuous and for each 
$x \in U$ there exists a $0$-neighborhood $V$ with $x + V \subeq U$ and 
continuous homogeneous polynomials $\beta_k \: E \to F$ of degree $k$ 
such that for each $h \in V$ we have 
$$ f(x+h) = \sum_{k = 0}^\infty \beta_k(h), $$
as a pointwise limit (\cite{BS71}). 
The map $f$ is called {\it holomorphic} if it is $\cC^1$ 
and for each $x \in U$ the 
map $\dd f(x) \: E \to F$ is complex linear (cf.\ \cite[p.~1027]{Mil84}). 
If $F$ is sequentially complete, then $f$ is holomorphic if and only if 
it is complex analytic (\cite[Ths.~3.1, 6.4]{BS71}). 

(c) If $E$ and $F$ are real locally convex spaces, 
then we call a map $f \: U \to F$, $U \subeq E$ open, 
{\it real analytic} or a $\cC^\omega$-map, 
if for each point $x \in U$ there exists an open neighborhood 
$V \subeq E_\C$ and a holomorphic map $f_\C \: V \to F_\C$ with 
$f_\C\res_{U \cap V} = f\res_{U \cap V}$  (cf.\ \cite{Mil84}). 
The advantage of this definition, which differs from the one in 
\cite{BS71}, is that it also works nicely for non-complete spaces. 
Any analytic map is smooth, 
and the corresponding chain rule holds without any condition 
on the underlying spaces, which is the key to the definition of 
analytic manifolds (see \cite{Gl02} for details).
\end{defn}

Once the concept of a smooth function 
between open subsets of locally convex spaces is established  
(cf.\ \cite{Ne06}, \cite{GN12}), it is clear how to define 
a locally convex smooth manifold. 
A {\it (locally convex) Lie group} $G$ is a group equipped with a 
smooth manifold structure modeled on a locally convex space 
for which the group multiplication and the 
inversion are smooth maps. We write $\1 \in G$ for the identity element. 
% and 
%$\lambda_g(x) = gx$, resp., $\rho_g(x) = xg$ for the left, resp.,
%right multiplication on $G$. 
Then each $x \in T_\1(G)$ corresponds to
a unique left invariant vector field $x_l$ with 
$x_l(\1) = x$. The space of left invariant vector fields is closed under the Lie
bracket of vector fields, hence inherits a Lie algebra structure. 
In this sense we obtain on $\g := T_\1(G)$ a continuous Lie bracket which
is uniquely determined by $[x,y] = [x_l, y_l](\1)$ for $x,y \in \g$. 
We shall also use the functorial notation $\L(G) := (\g,[\cdot,\cdot])$ 
for the Lie algebra of $G$. 
% and, accordingly, 
%$\L(\phi) = T_\1(\phi)\: \L(G_1) \to \L(G_2)$ 
%for the Lie algebra morphism associated to 
%a morphism $\phi \: G_1 \to G_2$ of Lie groups. 
%Then $\L$ defines a functor from the category 
%of locally convex Lie groups to the category of locally convex topological 
%Lie algebras. 
The adjoint action of $G$ on $\g$ is defined by 
$\Ad(g) := T_\1(c_g)$, where $c_g(x) = gxg^{-1}$ is the conjugation map. 
The adjoint action is smooth and 
each $\Ad(g)$ is a topological isomorphism of $\g$. 
If $\g$ is a Fr\'echet, resp., a Banach space, then 
$G$ is called a {\it Fr\'echet-}, resp., a 
{\it Banach--Lie group}. 
%For every Lie group $G$, the tangent bundle $TG$ is a Lie group 
%with respect to the tangent map $T(m_G)$ of the multiplication 
%$m_G \: G\times G \to G$ on $G$. It contains $G$ as the zero section, 
%which is a Lie subgroup, and the projection $TG \to G$ is a morphism of 
%Lie groups whose kernel is the additive group 
%of $\g \cong T_\1(G)$. In this sense we write 
%$g.x = T_{(g,\1)}(m_G)(0,x) = T_\1(\lambda_g)x$ and 
%$x.g = T_{(\1,g)}(m_G)(x,0) = T_\1(\rho_g)x$ for 
%$g \in G$ and $x \in \g$. The two maps 
%\begin{equation}
%  \label{eq:tang-triv}
%G \times \g \to TG, \quad (g,x) \mapsto g.x 
%\quad \mbox{ and } \quad 
%G \times \g \to TG, \quad (g,x) \mapsto x.g 
%\end{equation}
%trivialize the tangent bundle. 

A smooth map $\exp_G \: \g \to G$  is called an {\it exponential function} 
if each curve $\gamma_x(t) := \exp_G(tx)$ is a one-parameter group 
with $\gamma_x'(0)= x$. The Lie group $G$ is said to be 
{\it locally exponential} 
if it has an exponential function for which there is an open $0$-neighborhood 
$U$ in $\g$ mapped diffeomorphically by $\exp_G$ onto an 
open subset of $G$. If, in addition, $G$ is analytic and 
the exponential function is an analytic 
diffeomorphism in a $0$-neighborhood, 
then $G$ is called a {\it BCH--Lie group} (for Baker--Campbell--Hausdorff). 
The class of BCH--Lie groups contains in particular all Banach--Lie groups 
(\cite[Prop.~IV.1.2]{Ne06}).

\section{Linearity of differentials} \mlabel{sec:2}
\label{sec:s2}

The main result of this section is Theorem~\ref{thm:3.4}. 
It extends \cite[Thm.~2.5]{BB11} in the sense that it weakens the necessary assumptions 
considerably and thus makes it much simpler to apply. Its main application 
is that, for a continuous representation 
of the Lie group $G$ (with the Trotter property) on a locally convex space $V$
and an element $v \in V$ for which 
$\omega_v(x) := \frac{d}{dt}|_{t=0} \pi(\exp_G (tx))v$ exists for every $x \in \L(G)$, 
the map $\omega_v \: \g \to V$ 
is linear (Theorem~\ref{thm:additive}). For continuous representations 
of Banach--Lie groups on Banach spaces this was known 
from \cite[Thm.~8.2]{Ne10}, but the proof used quite involved 
results of Neklyudov (\cite{Nek08}) which are now bypassed. Therefore 
our results also provide a considerable simplification of the arguments for 
Banach--Lie groups.

Let $S\subseteq\R$ and $G$ be a topological group. We say that a sequence of functions $\gamma_n:S\to G$ is \emph{uniformly convergent} to $\gamma:S\to G$ if for every $\1$-neighborhood $U\subseteq G$ there exists an $N\geq 1$ such that $\gamma_n(s)\in\gamma(s)U$ 
for every $s\in S$ and every $n\geq N$. 
If the maps $\gamma_n:S\to G$ are continuous then $\gamma$ is continuous as well. If, in addition, $S$ is compact, then the definition is symmetric, that is, 
for every $\1$-neighborhood $U\subseteq G$ there exists an $N\geq 1$ such that
 $\gamma_n(s)\in U\gamma(s)$ for  every $s\in S$ and every $n\geq N$.

\begin{defn} (cf.\ \cite{BB11}) 
(a) Let $G$ be a topological group and 
$\fL(G) := \Hom(\R,G)$ denote the set of continuous 
one-parameter groups. We endow this set with the 
topology of uniform convergence on compact subsets of~$\R$. 
For $X, X_1, X_2 \in \fL(G)$ we say that $X = X_1 + X_2$ if 
\begin{equation}
  \label{eq:trotter} 
X(t) = \lim_{n \to \infty} 
\Big(X_1\Big(\frac{t}{n}\Big)X_2\Big(\frac{t}{n}\Big)\Big)^n 
\end{equation}
holds uniformly in $t$ on compact subsets of $\R$. 

(b) We say that a Lie group $G$ 
has the {\it Trotter property} if 
it has a smooth exponential function and the one-parameter groups 
$\gamma_x(t) := \exp_G(tx)$, $x \in \L(G)$, satisfy 
$\gamma_{x_1 + x_2} = \gamma_{x_1} + \gamma_{x_2}$ 
in the sense defined above, i.e., 
\[ \exp_G(t(x_1 + x_2)) = \lim_{n \to \infty} 
\Big(\exp_G\Big(\frac{t}{n}x_1\Big)\exp_G\Big(\frac{t}{n}x_2\Big)\Big)^n \text{ \ for all }x_1,x_2\in\L(G),\] 
uniformly on compact subsets of $\R$. 

\end{defn}

\begin{lem} \mlabel{lem:2.2} Let $G$ be a topological group and 
$\gamma_n \: I = [0,1] \to G$ be a sequence of continuous curves 
converging uniformly to a limit curve 
$\gamma$. Then 
\[ \gamma(I) \cup \bigcup_{n = 1}^\infty \gamma_n(I) \]
is a compact subset of $G$. 
\end{lem}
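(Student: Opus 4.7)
The plan is to exhibit the set in question as the continuous image of a compact space, which is the cleanest way to get compactness in an arbitrary topological group (where we cannot appeal to sequential criteria or metric arguments).

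First, I would introduce the compact subspace $X := \{0\} \cup \{1/n : n \geq 1\}$ of $\R$, which is a convergent sequence together with its limit, and form the compact product space $X \times I$. I then define a map
\[
\Phi \: X \times I \to G, \qquad \Phi(1/n,t) := \gamma_n(t), \qquad \Phi(0,t) := \gamma(t).
\]
Its image is precisely $\gamma(I) \cup \bigcup_{n \geq 1} \gamma_n(I)$, so once I show $\Phi$ is continuous, compactness of this set follows immediately from compactness of $X \times I$.

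The continuity of $\Phi$ at points of the form $(1/n, t_0)$ is immediate, because near such a point $\Phi$ coincides with $\gamma_n$ on the slice $\{1/n\} \times I$ (and $\{1/n\}$ is open in $X$). The only nontrivial step is continuity at points $(0, t_0)$, and this is where the uniform convergence must be used carefully. Given a neighborhood $W$ of $\gamma(t_0)$, I would choose a symmetric $\1$-neighborhood $U$ with $\gamma(t_0) U^2 \subeq W$, using continuity of the multiplication at $(\1,\1)$. By continuity of $\gamma$, there is $\delta > 0$ such that $\gamma(t) \in \gamma(t_0) U$ whenever $|t - t_0| < \delta$; by the definition of uniform convergence, there is $N$ such that $\gamma_n(s) \in \gamma(s) U$ for all $s \in I$ and $n \geq N$. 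Combining these two facts yields $\gamma_n(t) \in \gamma(t_0) U^2 \subeq W$ for $n \geq N$ and $|t-t_0| < \delta$, while $\gamma(t) \in W$ for $|t-t_0| < \delta$ by the symmetric inclusion. This shows that $\Phi$ maps a neighborhood of $(0,t_0)$ into $W$.

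With continuity of $\Phi$ established, the lemma follows at once: the image $\Phi(X \times I)$ is compact, and it visibly equals $\gamma(I) \cup \bigcup_n \gamma_n(I)$. The only real obstacle is the continuity check at $(0,t_0)$, and this is handled by the standard two-step estimate above; the virtue of the argument is that it never uses anything beyond the definition of uniform convergence and the joint continuity of the group operation, so it works verbatim in any topological group.
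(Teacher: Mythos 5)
Your proof is correct and follows essentially the same route as the paper: realize the set as the continuous image of $(\{0\}\cup\{1/n : n\in\N\})\times I$ under the map sending $(1/n,t)\mapsto\gamma_n(t)$ and $(0,t)\mapsto\gamma(t)$. The paper simply asserts the continuity of this map, whereas you spell out the verification at the points $(0,t_0)$ via uniform convergence and the joint continuity of multiplication, which is a welcome (and correct) filling-in of the detail.
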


\begin{prf} Let 
$X := Y \times I$, where 
$Y := \{0\} \cup \{ \frac{1}{n} \: n \in \N\} \subeq \R$. 
Then $Y$ is compact, and therefore $X$ is a compact subset of 
$\R^2$. The map 
$F \: X \to G$ defined by 
\[ F(s,t) := 
\begin{cases} 
\gamma_n(t) & \text{ for } s = \frac{1}{n} \\ 
\gamma(t) & \text{ for } s = 0 
\end{cases} \] 
is continuous, and therefore its image is a compact subset of~$G$.
\end{prf}

\begin{lem} \mlabel{lem:2.3} Let $G$ and $H$ be topological groups, $K \subeq G$ be 
a compact subset and 
$f \: G \to H$ continuous. Then $f$ is uniformly continuous on $K$ in the following sense: 
For every $\1$-neighborhood $U_H$ in $H$ there exists a $\1$-neighborhood $U_G$ in 
$G$ such that 
\[ (\forall x \in K)\ f(xU_G) \subeq f(x) U_H.\] 
\end{lem}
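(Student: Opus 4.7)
The plan is to adapt the classical proof that a continuous function from a compact metric space to a metric space is uniformly continuous, replacing epsilons by neighborhoods in a topological group.

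First I would reduce the multiplicative structure to a tractable one by choosing a symmetric $\1$-neighborhood $V_H \subseteq H$ with $V_H V_H \subseteq U_H$, which exists by the standard axiom of continuous multiplication. Next, for each $x \in K$, continuity of $f$ at $x$ and the fact that neighborhoods of $f(x)$ have the form $f(x)V_H$ produces a $\1$-neighborhood $W_x \subseteq G$ with $f(xW_x) \subseteq f(x)V_H$. I would then pick a symmetric $\1$-neighborhood $W_x' \subseteq G$ satisfying $W_x' W_x' \subseteq W_x$.

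The family $\{xW_x' : x \in K\}$ is an open cover of $K$, and compactness yields finitely many points $x_1,\dots,x_n \in K$ with $K \subseteq \bigcup_{i=1}^n x_i W_{x_i}'$. I would then set
\[ U_G := \bigcap_{i=1}^n W_{x_i}', \]
which is a $\1$-neighborhood in $G$. Given any $x \in K$, pick $i$ with $x \in x_i W_{x_i}'$; then $xU_G \subseteq x_i W_{x_i}' W_{x_i}' \subseteq x_i W_{x_i}$, hence $f(xU_G) \subseteq f(x_i)V_H$. Similarly, $f(x) \in f(x_i W_{x_i}') \subseteq f(x_i)V_H$, so by symmetry of $V_H$ we obtain $f(x_i) \in f(x)V_H$. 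Combining these inclusions gives $f(xU_G) \subseteq f(x)V_H V_H \subseteq f(x)U_H$, as desired.

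There is no serious obstacle here: the only subtlety is making sure the symmetric neighborhood $W_x'$ with $W_x'W_x' \subseteq W_x$ is used so that if $x$ lies in $x_i W_{x_i}'$ then the whole translate $xU_G$ still lies in $x_i W_{x_i}$, which is exactly the analogue of the triangle-inequality trick $d(x,x_i) < \delta/2$ implies $d(y,x_i) < \delta$ when $d(x,y) < \delta/2$. Everything else is straightforward manipulation in topological groups.
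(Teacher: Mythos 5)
Your proof is correct: the symmetric neighborhoods $V_H$ and $W_x'$ do exactly the job of the ``$\delta/2$'' trick, the finite subcover gives a single $\1$-neighborhood $U_G$, and the chain $f(xU_G)\subeq f(x_i)V_H\subeq f(x)V_HV_H\subeq f(x)U_H$ is watertight (symmetry of $V_H$ is indeed needed to reverse $f(x)\in f(x_i)V_H$ into $f(x_i)\in f(x)V_H$, and you use it correctly). However, your route differs from the paper's. The paper considers the single continuous map $F\:K\times G\to H$, $F(x,z):=f(x)^{-1}f(xz)$, notes that $F^{-1}(U_H^0)$ is open and contains $K\times\{\1\}$, and then invokes compactness of $K$ (the tube-lemma step) to produce one open $U_G$ with $K\times U_G\subeq F^{-1}(U_H^0)$, which is exactly the assertion; it also remarks that the statement follows from the theory of uniform structures on compact spaces (Remark \ref{rem:3.9}). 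Your argument is the elementary covering proof, essentially unpacking by hand what the tube lemma packages: it is longer and requires the bookkeeping with $V_H$, $W_x$, $W_x'$, but it is completely self-contained and mirrors the familiar metric-space proof. The paper's version is shorter and cleaner because the translation-invariance of the problem is absorbed into the definition of $F$, at the cost of quietly relying on the tube lemma (itself a compactness-covering argument of the same nature as yours).
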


\begin{prf} This essentially follows from Remark \ref{rem:3.9}(b) but we also give a direct proof. The map 
\[ F \: K \times G \to H, \quad F(x,z) := f(x)^{-1}f(xz)\] 
is continuous, so that the inverse image 
$F^{-1}(U_H^0)$ of the interior $U_H^0$ of $U_H$ 
is an open subset of $K \times G$ containing 
$K \times \{\1\}$. Hence there exists an open subset $U_G \subeq G$ 
with $K \times U_G \subeq F^{-1}(U_H^0)$. 
\end{prf}

\begin{defn} (\cite[Def.~2.1]{BB11})
Let $G$ be a topological group and $V$ be a locally convex space.  
For $\phi\colon G\to V$, $X\in\fL(G)$, and $g\in G$,  we write 
\begin{equation}\label{aux4_eq1}
(D_X\phi)(g)=\lim_{t\to 0}\frac{\phi(gX(t))-\phi(g)}{t} 
\end{equation}
whenever the limit on the right-hand side exists. 
\end{defn}

The next lemma is a 
more accurate version of
\cite[Prop. 2.2]{BB11}. We denote the space of continuous maps from $G$ into $V$ by $\cC(G,V)$.

\begin{lem} \mlabel{lem:2.5} 
Let $G$ be a topological group, $V$ be a locally convex space 
and $\phi \in \cC(G,V)$. Then the following assertions hold: 
\begin{description} 
\item[\rm(i)] Let $X \in \fL(G)$ and $n \geq 1$ be such that the function 
$D_X^n \phi$ is defined and continuous. 
For every $g\in G$ and $t\in\R$ we then have 
\[ \phi(gX(t))
=\sum_{j=0}^n\frac{t^j}{j!}(D_X^j\phi)(g)+t^n\chi_1^X(g,t)\] 
where $\chi_1^X\colon G\times\R\to V$ is continuous 
and $\chi_1^X(g,0) = 0$ for every $g\in G$. 
\item[\rm(ii)] If $X_1,X_2\in\fL(G)$ are such that 
$D_{X_1}\phi$ and $D_{X_2}\phi$ are defined and continuous, then 
\[ \phi(gX_1(t)X_2(t))
=\phi(g)+t((D_{X_1}+D_{X_2})\phi)(g)+t\chi_2(g,t)\quad \mbox{ for all } \quad (g,t) \in G \times \R,\] 
 where $\chi_2\colon G\times\R\to V$  
is a continuous function satisfying 
$\chi_2(g,0) = 0$ for every $g\in G$. 
\end{description}
\end{lem}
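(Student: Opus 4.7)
The plan is to prove (i) by induction on $n$, reducing the joint continuity of the Taylor remainder to a mean value inequality, and then to derive (ii) by applying the $n=1$ case of (i) twice.

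For (i), define
\[
\chi_1^X(g,t) := t^{-n}\Bigl[\phi(gX(t)) - \sum_{j=0}^n \tfrac{t^j}{j!}(D_X^j\phi)(g)\Bigr]\quad (t\neq 0),\qquad \chi_1^X(g,0):=0.
\]
Continuity at points with $t\neq 0$ is immediate from the continuity of $\phi$ and of the $D_X^j\phi$, so the content lies in joint continuity at points $(g_0,0)$. The key computation uses the one-parameter group identity $X(s+h)=X(s)X(h)$ together with the definition of $D_X\phi$ to show that, for each fixed $g$, the curve $s\mapsto \phi(gX(s)) - \sum_{j=0}^n \tfrac{s^j}{j!}(D_X^j\phi)(g)$ is $\cC^1$ with derivative
\[
(D_X\phi)(gX(s)) - \sum_{k=0}^{n-1}\tfrac{s^k}{k!}(D_X^{k+1}\phi)(g),
\]
which is exactly the order-$(n-1)$ Taylor remainder of $D_X\phi$ at $g$. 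In the base case $n=1$ this derivative is simply $(D_X\phi)(gX(s))-(D_X\phi)(g)$, jointly continuous and vanishing on $G\times\{0\}$. In the inductive step, apply (i) at order $n-1$ to $D_X\phi$ (whose iterated derivative $D_X^{n-1}(D_X\phi)=D_X^n\phi$ is continuous by hypothesis) to rewrite this derivative as $s^{n-1}\tilde\chi(g,s)$ with $\tilde\chi$ jointly continuous and $\tilde\chi(g,0)=0$.

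Now invoke the mean value inequality for $\cC^1$ paths into a locally convex space (derived from the scalar Lagrange estimate via Hahn--Banach): for every continuous seminorm $p$ on $V$,
\[
p\bigl(\chi_1^X(g,t)\bigr)\;\leq\;\sup_{|s|\leq|t|} p\bigl(\tilde\chi(g,s)\bigr).
\]
A standard compactness argument on the varying interval $[-|t|,|t|]$, combined with the joint continuity of $\tilde\chi$ and $\tilde\chi(g_0,0)=0$, shows that the right-hand side tends to $0$ as $(g,t)\to(g_0,0)$, giving joint continuity of $\chi_1^X$ there. For (ii), apply (i) with $n=1$ twice: first expand $\phi(gX_1(t)X_2(t))$ along $X_2$ at the base point $gX_1(t)$, then expand $\phi(gX_1(t))$ along $X_1$ at $g$. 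The cross-term $(D_{X_2}\phi)(gX_1(t))-(D_{X_2}\phi)(g)$ is jointly continuous and vanishes at $t=0$ by continuity of $D_{X_2}\phi$ and of $X_1$, so it can be folded into the remainder $\chi_2(g,t):=\chi_1^{X_1}(g,t)+\chi_1^{X_2}(gX_1(t),t)+\bigl[(D_{X_2}\phi)(gX_1(t))-(D_{X_2}\phi)(g)\bigr]$.

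The main obstacle is precisely the joint continuity of $\chi_1^X$ at $(g_0,0)$: the pointwise convergence of the difference quotient is only the definition of $(D_X\phi)(g)$, so one has to upgrade it to uniformity as the base point $g$ varies in a neighborhood of $g_0$. This upgrade is accomplished by combining the mean value inequality with the joint continuity of the auxiliary remainder $\tilde\chi$ supplied by the induction hypothesis, which is why framing the curve-wise derivative as the Taylor remainder of $D_X\phi$ (rather than performing a direct estimate of higher-order quotients) is the right setup.
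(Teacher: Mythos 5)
Your treatment of (ii) coincides with the paper's proof: both expand first along $X_2$ at the base point $gX_1(t)$, then along $X_1$ at $g$, and fold the cross term $(D_{X_2}\phi)(gX_1(t))-(D_{X_2}\phi)(g)$ into $\chi_2$, so that part is fine. For (i), however, you take a genuinely different route, and as written it has a gap relative to the stated hypotheses. The lemma assumes only that $D_X^n\phi$ is defined and continuous; for $1\le j\le n-1$ the functions $D_X^j\phi$ are merely required to exist. Your induction step applies (i) at order $n-1$ to the function $D_X\phi$, which presupposes $D_X\phi\in\cC(G,V)$ (the standing hypothesis of the lemma), and your continuity claim at points $(g_0,t_0)$ with $t_0\neq 0$ likewise invokes ``continuity of the $D_X^j\phi$''; neither is granted. (Continuity of the lower-order derivatives is in fact a consequence of the lemma, e.g.\ by evaluating the asserted expansion at $n+1$ small nonzero values of $t$ and inverting a Vandermonde matrix, but it cannot be used as an input without circularity.) So your argument proves (i) only for $n=1$, or for general $n$ under the stronger assumption that all $D_X^j\phi$, $j\le n$, are continuous. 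Since (ii) and its later use in Lemma~\ref{lem:2.6} need only $n=1$, the applications are unaffected, but the statement of (i) as given is not fully established.

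The paper sidesteps this entirely by writing the remainder in integral Taylor form,
\[ \chi_1^X(g,t)=\frac{1}{(n-1)!}\int_0^1\Bigl((1-s)^{n-1}(D_X^n\phi)(gX(ts))-\frac{1}{n}(D_X^n\phi)(g)\Bigr)\,ds, \]
which involves only $D_X^n\phi$, so joint continuity is immediate from Lemma~\ref{lem:2.3} applied to $(g,t,s)\mapsto (D_X^n\phi)(gX(ts))$ --- no induction and no mean value inequality. If you prefer to keep your integral-free scheme, it can be repaired: do not invoke the lemma inductively, but iterate the mean value inequality directly on the curvewise remainders $R_k^{(m)}(g,s):=(D_X^m\phi)(gX(s))-\sum_{j=0}^k\frac{s^j}{j!}(D_X^{m+j}\phi)(g)$, which satisfy $\frac{d}{ds}R_k^{(m)}(g,s)=R_{k-1}^{(m+1)}(g,s)$ pointwise for $m+k\le n$, with no continuity of intermediate derivatives needed. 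Iterating from $(k,m)=(n,0)$ down to $(0,n)$ gives, for every continuous seminorm $p$,
\[ p\bigl(R_n^{(0)}(g,t)\bigr)\le |t|^n\,\sup_{|u|\le |t|}p\bigl((D_X^n\phi)(gX(u))-(D_X^n\phi)(g)\bigr), \]
which yields joint continuity of $\chi_1^X$ at every point $(g_0,0)$ using only the continuity of $D_X^n\phi$; continuity of the lower-order derivatives, and hence of $\chi_1^X$ at points with $t\neq 0$, then follows by the Vandermonde argument mentioned above.
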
 

\begin{prf} (i) The Taylor formula leads to the asserted identity with 
the remainder term 
\[ \chi_1^X(g,t) = 
\frac{1}{(n-1)!}\int_0^1\Bigl((1-s)^{n-1}(D_X^n\phi)(gX(ts))
-\frac{1}{n}(D_X^n\phi)(g)\Bigr)\, ds 
\] 
which is continuous on $G \times \R$ because the function $D_X^n\phi$ 
is continuous and hence one can use Lemma \ref{lem:2.3} for compact sets of the form $\{g\}\times [0,1]\subseteq G\times\R$. 

(ii) First we use (i) to obtain a continuous function 
$\chi_1^{X_2}$ on $G \times \R$ vanishing in $(g,0)$ and satisfying 
\[ \phi(gX_2(t))=\phi(g)+t(D_{X_2}\phi)(g)+t\chi_1^{X_2}(g,t) \quad \mbox{ for } \quad 
(g,t) \in G \times \R.\] 
This leads immediately to 
\[ \phi(gX_1(t)X_2(t))=\phi(gX_1(t))+t(D_{X_2}\phi)(gX_1(t))
+t\chi_1^{X_2}(gX_1(t),t).\] 
From (i) we also obtain a continuous function 
$\chi_1^{X_1}$ on $G \times \R$ vanishing in $(g,0)$ and satisfying 
\[ \phi(gX_1(t))=\phi(g)+t(D_{X_1}\phi)(g)+t\chi_1^{X_1}(g,t)
\quad \mbox{ for } \quad (g,t) \in G \times \R.\] 
By plugging this formula into the previous one, we get 
\[ \phi(gX_1(t)X_2(t))
=\phi(g)+t(D_{X_1}\phi)(g)+t(D_{X_2}\phi)(g) +t\chi_2(g,t),\] 
where 
\[ \chi_2(g,t)=
(D_{X_2}\phi)(gX_1(t))
-(D_{X_2}\phi)(g)+\chi_1^{X_1}(g,t)+\chi_1^{X_2}(gX_1(t),t)\] 
is a continuous function on $G \times \R$ vanishing 
in all pairs~$(g,0)$. 
\end{prf}

Based on the preceding lemma, we 
obtain Lemma \ref{lem:2.6} below, which is  a 
sharpening  
of \cite[Lemma~2.3]{BB11}.
Here the main point is that Lemmas~\ref{lem:2.2} and \ref{lem:2.3} 
permit us to draw stronger conclusions from the proof given 
in \cite{BB11}.

\begin{lem} \mlabel{lem:2.6}
Let $G$ be a topological group and $V$ be a locally convex space. 
Let $X, X_1, X_2 \in \fL(G)$ with $X = X_1 + X_2$ and 
$\phi \in \cC(G,V)$ be such that 
$D_{X_1} \phi$ and $D_{X_2} \phi$ exist and are continuous. 
Then $D_X\phi$ also exists and satisfies 
\begin{equation}
  \label{eq:sum} 
D_X \phi  = D_{X_1} \phi + D_{X_2} \phi.
\end{equation}
\end{lem}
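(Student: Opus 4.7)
The plan is to iterate Lemma~\ref{lem:2.5}(ii) along the Trotter approximation $(X_1(t/n)X_2(t/n))^n \to X(t)$ and then let $n\to\infty$, using Lemmas~\ref{lem:2.2} and~\ref{lem:2.3} to control the resulting sums uniformly in $n$. Fix $g\in G$ and, for $n\geq 1$ and $0\leq k\leq n$, set $g_k = g(X_1(t/n)X_2(t/n))^k$, so $g_0 = g$. Applying Lemma~\ref{lem:2.5}(ii) with base point $g_k$ and parameter $t/n$ and telescoping over $k=0,\dots,n-1$ yields
\[\phi(g_n)-\phi(g) \;=\; \frac{t}{n}\sum_{k=0}^{n-1}((D_{X_1}+D_{X_2})\phi)(g_k) \;+\; \frac{t}{n}\sum_{k=0}^{n-1}\chi_2(g_k, t/n).\]
Because $X = X_1 + X_2$, the point $g_n$ converges to $gX(t)$ uniformly on compact $t$-intervals, so by continuity of $\phi$ the left-hand side tends to $\phi(gX(t)) - \phi(g)$ as $n\to\infty$. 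The remaining task is to control the right-hand side uniformly in $n$ for small $t$.

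The key geometric input is a compactness statement. Setting $Y_m(\tau) := (X_1(\tau/m)X_2(\tau/m))^m$, one notes that $g_k = g\,Y_k(kt/n)$, and for $|t|\leq t_0$ the argument $kt/n$ lies in $[-t_0, t_0]$. Since $Y_m \to X$ uniformly on $[-t_0, t_0]$ by the hypothesis $X = X_1 + X_2$, Lemma~\ref{lem:2.2} (applied after reparametrizing $[0,1]$ to $[-t_0, t_0]$) implies that $X([-t_0, t_0]) \cup \bigcup_{m\geq 1} Y_m([-t_0, t_0])$ has compact closure, so all the $g_k$ lie in a fixed compact set $gK_0\subseteq G$. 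A refinement of the same uniform-convergence argument, using continuity of $X$ at $0$ and handling the finitely many indices below the uniform-convergence threshold separately, shows that $Y_m(\tau)\to\1$ uniformly in $m$ as $\tau\to 0$, whence $g_k \to g$ uniformly in $k,n$ as $t\to 0$.

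With these two facts in hand, fix a continuous seminorm $p$ on $V$ and $\eps > 0$. By continuity of $(D_{X_1}+D_{X_2})\phi$ at $g$ combined with the uniform convergence $g_k \to g$, there is $\delta_1 > 0$ such that $p(((D_{X_1}+D_{X_2})\phi)(g_k) - ((D_{X_1}+D_{X_2})\phi)(g)) < \eps$ for all $n,k$ whenever $|t|<\delta_1$. For the remainder, applying Lemma~\ref{lem:2.3} to $\chi_2$ on the compact set $gK_0$ (a tube-lemma argument using $\chi_2(\cdot,0) \equiv 0$) produces $\delta_2 > 0$ with $p(\chi_2(g_k, t/n)) < \eps$ for all $n,k$ and $|t|<\delta_2$. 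Inserting these bounds into the telescoped identity gives, uniformly in $n$,
\[p\bigl(\phi(g_n) - \phi(g) - t\,((D_{X_1}+D_{X_2})\phi)(g)\bigr) \;\leq\; 2|t|\eps;\]
letting $n\to\infty$, dividing by $|t|\neq 0$, and sending $\eps\to 0$ then establishes that $D_X\phi(g)$ exists and equals $((D_{X_1}+D_{X_2})\phi)(g)$. The main subtlety I expect is the uniform-in-$(k,n)$ control of the points $g_k$ for small $t$; this is exactly where Lemmas~\ref{lem:2.2} and~\ref{lem:2.3} provide the strengthening over the argument in \cite{BB11}.
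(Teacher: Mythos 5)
Your proposal is correct and follows essentially the same route as the paper's proof: telescoping Lemma~\ref{lem:2.5}(ii) along the Trotter approximants, the reindexing $\gamma(t/n)^k=\gamma_k(kt/n)$ to keep intermediate points near $\1$ and inside the compact set supplied by Lemma~\ref{lem:2.2}, uniform smallness of $\chi_2$ via Lemma~\ref{lem:2.3}, and then the limit $n\to\infty$ in the resulting estimate. The only cosmetic difference is that you record the telescoped sum as an exact identity before estimating, whereas the paper estimates the two sums directly with the triangle inequality.
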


\begin{prf} For $t \in \R$ we put $\gamma(t)=X_1(t)X_2(t)$. 
Fix $g_0 \in G$. We have to show that $D_X\phi$ is defined in $g_0$ 
and satisfies  
\begin{equation}\label{eq:4} 
(D_X\phi)(g_0)
=(D_{X_1}\phi)(g_0)+(D_{X_2}\phi)(g_0).
\end{equation} 
To this end fix an arbitrary continuous seminorm 
$\vert\cdot\vert$ on $V$ and let $\eps>0$ be arbitrary. 
Since the functions $D_{X_j}\phi$ are continuous for $j=1,2$, 
there exists an open $\1$-neighborhood $U$ in $G$ such that 
\begin{equation} \label{eq:5} 
(\forall g\in U)\quad 
\vert((D_{X_1}+D_{X_2})\phi)(g_0g)
-((D_{X_1}+D_{X_2})\phi)(g_0)\vert<\frac{\eps}{2}.
\end{equation}

Let $\delta_1>0$ be such that $X(t)\in U$ for $-\delta_1\leq t \leq \delta_1$. 
By using \eqref{eq:trotter} with uniform convergence on the interval 
$[-\delta_1,\delta_1]$, we obtain an $n_1 \in \N$ such that 
if $n\ge n_1$ and $-\delta_1\le t\le\delta_1$, 
then $\gamma(t/n)^n\in U$. There also exists a $\delta_2\in(0,\delta_1)$ such that 
for  $n= 1,\dots,n_1$ and $-\delta_2\le t\le\delta_2$ we have 
 $\gamma(t/n)^n\in U$. 
Therefore 
\[ \gamma\Bigl(\frac{t}{n}\Bigr)^n\in U
\text{ if } -\delta_2\le t\le\delta_2 
\text{ and }n\in \N. \] 
If  $1\le k\le n$ and $\vert t\vert\le\delta_2$, 
then $\vert(k/n)t\vert\le\vert t\vert\le\delta_2$, 
so that we also obtain 
$$\gamma\Bigl(\frac{t}{n}\Bigr)^k
=\gamma\Bigl(\frac{(k/n)t}{k}\Bigr)^k
\in U.$$ 

According to Lemma~\ref{lem:2.2}, the set 
\[ C := X([-\delta_2, \delta_2]) \cup 
\bigcup_{n \in \N} 
\Big\{ \gamma\Big(\frac{t}{n}\Big)^n \: -\delta_2 \leq t \leq \delta_2\Big\} \] 
is compact. 
Therefore the continuity of 
$\chi_2$ in Lemma~\ref{lem:2.5}(ii) implies (by Lemma \ref{lem:2.3})
the existence of $\delta_3 \in (0,\delta_2)$ 
such that, for $x \in g_0C$ and $0 < |t| \leq \delta_3$, we have 
\begin{equation}\label{eq:6}
|\chi_2(x,t)| = \Bigl\vert\frac{1}{t}(\phi(x\gamma(t))-\phi(x))
-((D_{X_1}+D_{X_2})\phi)(x)\Bigr\vert
<\frac{\eps}{2}. 
\end{equation}
Note that we also have 
$\gamma(t/n)^k \in C$ for $0 \leq k \leq n$ and $|t| \leq \delta_2$.  
This allows us to use \eqref{eq:5} 
in order to show that if $0 < |t| \leq \delta_3$ and $n\in \N$, then 
\allowdisplaybreaks
\begin{align}
&\Bigl\vert\frac{1}{t}
\Bigl(\phi\Bigl(g_0 \gamma\Bigl(\frac{t}{n}\Bigr)^n\Bigr)-\phi(g_0)\Bigr)
-((D_{X_1}+D_{X_2})\phi)(g_0)\Bigr\vert  \nonumber \\
&\le\frac{1}{n}\sum_{k=1}^n
\Bigl\vert\frac{1}{t/n}
\Bigl(
\phi\Bigl(g_0 \gamma\Bigl(\frac{t}{n}\Bigr)^k\Bigr)
-\phi\Bigl(g_0 \gamma\Bigl(\frac{t}{n}\Bigr)^{k-1}\Bigr)
\Bigr) 
\hskip-3pt 
-((D_{X_1}+D_{X_2})\phi)\Bigl(g_0 \gamma\Bigl(\frac{t}{n}\Bigr)^{k-1}\Bigr)
\Bigr\vert \nonumber \\
&\hskip10pt +\frac{1}{n}\sum_{k=1}^n
\Bigl\vert 
((D_{X_1}+D_{X_2})\phi)\Bigl(g_0 \gamma\Bigl(\frac{t}{n}\Bigr)^{k-1}\Bigr)
-((D_{X_1}+D_{X_2})\phi)(g_0)
\Bigr\vert 
\nonumber \\
& <\frac{1}{n}\sum_{k=1}^n\frac{\eps}{2}+\frac{1}{n}\sum_{k=1}^n\frac{\eps}{2}=\eps. \nonumber
\end{align}
Since $\phi$ is continuous, we have 
$\lim_{n\to\infty}\phi(g_0 \gamma(t/n)^n)=\phi(g_0X(t))$ in~$V$. 
The above estimates thus lead for $0 < |t| \leq \delta_3$ to 
\[ \Bigl\vert\frac{1}{t}
(\phi(g_0X(t))-\phi(g_0))
-((D_{X_1}+D_{X_2})\phi)(g_0)\Bigr\vert\le\eps.\] 
Since $\vert\cdot\vert$ is an arbitrary continuous seminorm on 
the Hausdorff locally convex space~$V$, 
it follows that $D_X \phi$ exists in $g_0$ and that 
\eqref{eq:4} is satisfied. 
\end{prf}

The following theorem extends \cite[Thm~2.4]{BB11} in the sense that 
it applies to all continuous functions $G \to V$, which facilitates the 
application of this theorem considerably. 

\begin{thm} \mlabel{thm:3.4} Let $G$ be a Lie group with 
the Trotter property. Then, for every locally convex space $V$ and 
$\phi \in \cC(G,V)$ for which $D_x \phi := D_{\gamma_x}\phi$ exists and is continuous 
for every $x \in \L(G)$, the map 
\[ \L(G) \to \cC(G,V), \quad x \mapsto D_x \phi \] 
is linear. 
\end{thm}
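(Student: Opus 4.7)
The plan is to prove linearity by verifying additivity and homogeneity of $x \mapsto D_x\phi$ separately, with the bulk of the work already encapsulated in Lemma~\ref{lem:2.6}.

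First I would handle homogeneity, which does not need the Trotter property at all. For $\lambda \in \R$ and $x \in \L(G)$, the one-parameter group $\gamma_{\lambda x}$ satisfies $\gamma_{\lambda x}(t) = \exp_G(t\lambda x) = \gamma_x(\lambda t)$. Plugging this into the definition \eqref{aux4_eq1} and applying the chain rule for the single real variable $t$, one immediately obtains $D_{\lambda x}\phi(g) = \lambda\, D_x\phi(g)$ for every $g \in G$. Thus $D_{\lambda x}\phi = \lambda D_x\phi$ in $\cC(G,V)$.

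Next I would address additivity, which is where the Trotter property enters. Fix $x_1, x_2 \in \L(G)$. By the Trotter property applied to the pair $(x_1, x_2)$, the one-parameter groups $X := \gamma_{x_1 + x_2}$, $X_1 := \gamma_{x_1}$ and $X_2 := \gamma_{x_2}$ satisfy $X = X_1 + X_2$ in the sense of \eqref{eq:trotter}. By hypothesis, $\phi$ is continuous and both $D_{X_1}\phi = D_{x_1}\phi$ and $D_{X_2}\phi = D_{x_2}\phi$ exist and are continuous on $G$. Lemma~\ref{lem:2.6} therefore applies and yields that $D_X\phi$ exists and equals $D_{X_1}\phi + D_{X_2}\phi$, i.e.\ $D_{x_1+x_2}\phi = D_{x_1}\phi + D_{x_2}\phi$.

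Combining these two statements, the map $x \mapsto D_x\phi$ from $\L(G)$ to $\cC(G,V)$ is both additive and $\R$-homogeneous, hence $\R$-linear, which is the assertion of the theorem. There is essentially no obstacle in this argument once Lemma~\ref{lem:2.6} is in hand; all the real analytic work has already been performed there (in particular, the uniform control on products $\gamma(t/n)^k$ over the compact set $C$ and the Taylor-type expansion of Lemma~\ref{lem:2.5}). The only thing worth double-checking is that the continuity hypothesis on each $D_x\phi$ is used exactly in the way Lemma~\ref{lem:2.6} requires, namely to apply the uniform-continuity Lemma~\ref{lem:2.3} to the remainder term $\chi_2$ and to obtain the key estimate \eqref{eq:5}.
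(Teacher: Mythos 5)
Your proof is correct and matches the paper's intent exactly: the paper gives no separate argument for Theorem~\ref{thm:3.4}, since it is meant to follow immediately from Lemma~\ref{lem:2.6} (additivity via the Trotter property) together with the elementary reparametrization $\gamma_{\lambda x}(t)=\gamma_x(\lambda t)$ for homogeneity, which is precisely your decomposition. Nothing is missing; the continuity hypotheses are used exactly as Lemma~\ref{lem:2.6} requires.
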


\section{Examples of groups with the Trotter property}
\label{sec:examples}
An important point of the Trotter property is that almost all natural 
classes of infinite-dimensional Lie groups have this property, even if they are
not locally exponential, such as groups of diffeomorphisms. 

%\subsection*{A criterion for the Trotter property} 

\begin{lem}
\label{uniformcnv}
Let $G$ and $H$ be topological groups, $f:G\to H$ be a continuous map, and  $\gamma_n:I=[0,1]\to G$ be a sequence of continuous curves converging uniformly to a limit curve $\gamma$. Then the sequence $f\circ \gamma_n$ converges uniformly to $f\circ\gamma$. 
\end{lem}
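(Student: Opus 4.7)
The plan is to reduce this to a direct application of Lemma~\ref{lem:2.2} and Lemma~\ref{lem:2.3}, which together package exactly the two ingredients needed: compactness of the combined image of the curves, and uniform continuity of $f$ on compact sets.

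First I would observe that by Lemma~\ref{lem:2.2}, the set
\[ K := \gamma(I) \cup \bigcup_{n=1}^\infty \gamma_n(I) \]
is a compact subset of $G$. In particular, $\gamma(I)$ itself is compact. (Strictly speaking, to apply uniform continuity below, it is enough to work with $\gamma(I)$, but having $K$ compact also clarifies the geometric picture.)

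Next, given an arbitrary $\1$-neighborhood $U_H \subseteq H$, I would invoke Lemma~\ref{lem:2.3} for the continuous map $f \: G \to H$ on the compact set $\gamma(I)$ to obtain a $\1$-neighborhood $U_G \subseteq G$ with
\[ f(x U_G) \subseteq f(x)\, U_H \qquad \text{for every } x \in \gamma(I). \]
Since $\gamma_n \to \gamma$ uniformly on $I$, there exists $N \geq 1$ such that $\gamma_n(s) \in \gamma(s) U_G$ for all $s \in I$ and all $n \geq N$. Combining these two facts, for every such $n$ and $s$,
\[ f(\gamma_n(s)) \in f(\gamma(s)\, U_G) \subseteq f(\gamma(s))\, U_H, \]
which is precisely the statement that $f \circ \gamma_n \to f \circ \gamma$ uniformly on $I$.

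There is really no obstacle here: the lemma is essentially the assertion that a continuous map is ``uniformly continuous along uniformly convergent sequences of curves,'' and the two previously established lemmas have been tailored to deliver this conclusion immediately. The only minor point to bear in mind is to apply Lemma~\ref{lem:2.3} on a compact set that contains $\gamma(I)$ (so that the neighborhood $U_G$ is uniform in the base point $\gamma(s)$), which is guaranteed by the first step.
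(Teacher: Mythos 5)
Your proof is correct and follows essentially the same route as the paper: the paper simply notes that $K:=\gamma(I)$ is compact and invokes Lemma~\ref{lem:2.3}, which is exactly your argument spelled out in detail (your appeal to Lemma~\ref{lem:2.2} is harmless but, as you note yourself, unnecessary since only the compactness of $\gamma(I)$ is used).
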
 

\begin{proof}
%By Lemma \ref{lem:2.2} 
The set $K:=\gamma(I)$
is compact and the statement follows from
Lemma \ref{lem:2.3}.
%\cup\bigcup_{n=1}^\infty\gamma_n(I)$ 
%is compact, and by Lemma \ref{lem:2.3} the %map $f\big|_K:K\to H$ is uniformly %continuous. The lemma follows 
%immediately from the uniform continuity of %$f\big|_K$.
\end{proof}

\begin{lem}
\mlabel{bootstrap}
Let $G$ be a topological group and $\gamma:\R\to G$ be a continuous curve with 
$\gamma(0) = \1$. Set 
\[
\gamma_n:\R\to G\ , \gamma_n(t):=\gamma\Big(\frac{t}{n}\Big)^n.
\]
%For every $\varepsilon>0$ set $\gamma_{\varepsilon,n}:=\gamma_n\big|_{[-\varepsilon,\varepsilon]}$.
 Assume that there exists an  
$\varepsilon>0$ such that the sequence 
$(\gamma_n\res_{[-\eps,\eps]})_{n = 1}^{\infty}$ converges uniformly.
Then $(\gamma_n)_{n=1}^\infty$ converges uniformly on compact subsets of $\R$.
\end{lem}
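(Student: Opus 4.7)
The plan is to bootstrap uniform convergence on $[-\eps,\eps]$ to uniform convergence on an arbitrary compact interval $[-T,T]$, exploiting the elementary identity
\[ \gamma_{kn}(t) \;=\; \gamma\Big(\frac{t}{kn}\Big)^{kn} \;=\; \Big(\gamma\Big(\frac{t/k}{n}\Big)^{n}\Big)^{k} \;=\; \gamma_n(t/k)^k,\qquad k,n\in\N,\ t\in\R. \]
Denote by $\gamma^*\colon[-\eps,\eps]\to G$ the (continuous) uniform limit of $(\gamma_n|_{[-\eps,\eps]})_n$.

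Fix $T>0$ and choose $k\in\N$ with $T/k\le\eps$. For each sufficiently large $n$, write $n=kq+r$ with $0\le r<k$ (so $q=q(n)\to\infty$ as $n\to\infty$) and decompose
\[ \gamma_n(t) \;=\; \gamma(t/n)^{kq}\,\gamma(t/n)^{r} \;=\; \gamma_q\big(qt/n\big)^{k}\,\gamma(t/n)^{r}, \]
using the identity above to rewrite $\gamma(t/n)^{kq}$. For $t\in[-T,T]$ one has $|qt/n|\le|t|/k\le\eps$, and the map $t\mapsto qt/n$ converges to $t\mapsto t/k$ uniformly on $[-T,T]$. Combining uniform convergence $\gamma_q\to\gamma^*$ on $[-\eps,\eps]$ with uniform continuity of $\gamma^*$ on this compact set (via Lemma~\ref{lem:2.3}), I first obtain $\gamma_q(qt/n)\to\gamma^*(t/k)$ uniformly in $t\in[-T,T]$; then applying Lemma~\ref{lem:2.3} to the continuous $k$-th power map $x\mapsto x^k$ on the compact set $\gamma^*([-T/k,T/k])$ upgrades this to $\gamma_q(qt/n)^{k}\to\gamma^*(t/k)^{k}$ uniformly. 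Meanwhile the second factor $\gamma(t/n)^{r}$ tends to $\1$ uniformly on $[-T,T]$: given a $\1$-neighborhood $U$, pick a smaller $\1$-neighborhood $U'$ with $(U')^k\subeq U$ and observe that $\gamma(t/n)\in U'$ for all $t\in[-T,T]$ once $n$ is large enough, whence $\gamma(t/n)^{r}\in(U')^{r}\subeq U$.

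Uniform convergence of the product $\gamma_n(t)\to\gamma^*(t/k)^{k}$ on $[-T,T]$ then follows from continuity of the multiplication map together with one more application of Lemma~\ref{lem:2.3} on the compact image of the first limit. Since $T$ was arbitrary, this gives uniform convergence of $(\gamma_n)$ on every compact subset of $\R$.

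The main obstacle is purely topological: since $G$ is only a topological group, there is no metric in which to phrase uniform convergence, and every estimate must be expressed through $\1$-neighborhoods. The recurring tool is Lemma~\ref{lem:2.3}, which provides the appropriate \emph{uniform continuity on compact sets} statement for each continuous map entering the argument (namely $\gamma^*$, the $k$-th power map, and the multiplication map). A minor bookkeeping point is that the remainder $r=r(n)$ varies with $n$ but stays strictly less than $k$, so the control of $\gamma(t/n)^{r(n)}$ must be uniform in $r$; this is precisely why we choose $U'$ with $(U')^k\subeq U$ rather than merely $(U')^{r}\subeq U$.
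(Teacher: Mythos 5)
Your argument is correct, but it takes a genuinely different route than the paper. The paper reduces everything to interval doubling: it shows it suffices to pass from $[-\eps,\eps]$ to $[-2\eps,2\eps]$ and iterates, splitting the sequence into even and odd indices via $\gamma_{2n}(t)=\gamma_n(t/2)^2$ and $\gamma_{2n+1}(t)=\gamma\big(\tfrac{t}{2n+1}\big)\gamma_n\big(\tfrac{nt}{2n+1}\big)^2$, and then applies Lemma~\ref{uniformcnv} (continuity of the squaring and triple-product maps) to pass limits through. You instead jump in a single step from $[-\eps,\eps]$ to an arbitrary $[-T,T]$ via division with remainder $n=kq+r$ and the decomposition $\gamma_n(t)=\gamma_q(qt/n)^k\,\gamma(t/n)^r$; both proofs ultimately rest on the same identity $\gamma_{mn}(t)=\gamma_n(t/m)^m$, but yours uses a general $m=k$ with a remainder factor while the paper uses $m=2$ and an induction on the interval. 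The price of your version is the reparametrization $qt/n\to t/k$, which forces you to use uniform (right) continuity of the limit curve $\gamma^*$ on $[-\eps,\eps]$; note that Lemma~\ref{lem:2.3} is stated for maps defined on an entire topological group, so to invoke it you should first extend $\gamma^*$ continuously to $\R$ (e.g.\ constantly beyond the endpoints) or argue directly from compactness of $[-\eps,\eps]^2$ — a cosmetic fix, and the paper's odd-index step (the curves $\gamma_n\circ h_n$ with $h_n(t)=\tfrac{nt}{2n+1}$) implicitly needs the same kind of argument. What each approach buys: yours avoids the even/odd case split and the iteration, and identifies the limit on $[-T,T]$ explicitly as $\gamma^*(\cdot/k)^k$; the paper's keeps every reparametrized point inside the interval where convergence is already established, so it never needs to name the limit beyond $[-\eps,\eps]$ at each stage. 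One small simplification of your write-up: since the remainder $\gamma(t/n)^{r}$ multiplies on the right and tends to $\1$ uniformly (your $(U')^k\subeq U$ trick handles the varying $r$ correctly), the final product step needs no further appeal to Lemma~\ref{lem:2.3}; the error terms simply accumulate on the right of $\gamma^*(t/k)^k$.
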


\begin{prf}
It suffices to show that the sequence $(\gamma_n)_{n=1}^\infty$ converges uniformly on 
$[-2\eps,2\eps]$, since one can iterate the argument. 
Define 
\[ \eta:[-\varepsilon,\varepsilon]\to G, \quad 
\eta(t):=\lim_{n\to\infty}\gamma_n(t) \] 
and observe that
\[\gamma_{2n}(t) = \gamma_n\Big(\frac{t}{2}\Big)^2. \] 
Since the square map $f:G\to G, g \mapsto g^2$ is continuous, 
from Lemma \ref{uniformcnv} it follows that the sequence 
$(\gamma_{2n})_{n=1}^\infty$ converges uniformly on $[-2\eps,2\eps]$ to 
the curve $t \mapsto \eta\big(\frac{t}{2}\big)^2$. 
 A minor modification of the above argument proves that $(\gamma_{2n+1})_{n=0}^{\infty}$ 
also converges uniformly on $[-2\eps,2\eps]$ to the same curve. In fact, 
\begin{equation}
\label{eq:gamma2n+1}
\gamma_{2n+1}(t) = \gamma\Big(\frac{t}{2n+1}\Big)^{2n+1}= \gamma\Big(\frac{t}{2n+1}\Big)
\gamma_n(h_n(t))^2, \quad \mbox{ where } \quad 
h_n(t):=\frac{nt}{2n+1}.
\end{equation}
Since $\gamma(0) = \1$, the curves 
$t \mapsto \gamma\big(\frac{t}{2n+1}\big)$ 
converge uniformly on $[-2\eps,2\eps]$ 
to the constant function with value $\1$. 
The curves $(\gamma_n\circ h_n)_{n=0}^\infty$ converge uniformly on $[-2\eps,2\eps]$ to 
$\eta\big(\frac{t}{2}\big)$. 
Since the map $
\mu:G\times G\times G\to G$ defined 
by $\mu(x,y,z):= xyz$
is continuous, Lemma \ref{uniformcnv} implies that
$(\gamma_{2n+1})_{n=0}^\infty$
converges uniformly  on  $[-2\eps,2\eps]$ to 
$\eta\big(\frac{t}{2}\big)^2$. 
\end{prf}

The following proposition provides a criterion for a Lie group  to have 
the Trotter property. We shall see below that it is crucial to verify that 
certain groups of diffeomorphisms have the Trotter property. 

\begin{proposition}
\label{fromepstocpt}
Let $G$ be a  Lie group with a smooth exponential map. Assume that, for every $x_1,x_2\in\L(G)$, there exists an $\varepsilon>0$ such that
\begin{align}
\label{expx1x2eq} 
\exp_G(t(x_1 + x_2)) = \lim_{n \to \infty} 
\Big(\exp_G\Big(\frac{t}{n}x_1\Big)\exp_G\Big(\frac{t}{n}x_2\Big)\Big)^n \end{align} 
holds uniformly on $[-\varepsilon,\varepsilon]$. Then 
$G$ has the Trotter property.
\end{proposition}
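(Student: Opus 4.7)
The plan is to reduce the proposition directly to Lemma~\ref{bootstrap} applied to the right choice of curve. Fix $x_1, x_2 \in \L(G)$, and set
\[
\gamma:\R\to G, \quad \gamma(t) := \exp_G(tx_1)\exp_G(tx_2).
\]
Since $\exp_G$ is continuous and $\exp_G(0)=\1$, this is a continuous curve with $\gamma(0)=\1$, and the iterated product $\gamma_n(t)=\gamma(t/n)^n$ is exactly the expression appearing in \eqref{expx1x2eq}. The hypothesis of the proposition states precisely that there is some $\eps>0$ for which $(\gamma_n|_{[-\eps,\eps]})_n$ converges uniformly, the limit being $t\mapsto\exp_G(t(x_1+x_2))$. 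Hence the hypothesis of Lemma~\ref{bootstrap} is fulfilled, and we conclude that $(\gamma_n)_n$ converges uniformly on compact subsets of $\R$.

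It remains to identify the limit on all of $\R$ with $t\mapsto\exp_G(t(x_1+x_2))$. Let $\eta:\R\to G$ denote the (continuous) limit produced by Lemma~\ref{bootstrap}; by hypothesis, $\eta(t)=\exp_G(t(x_1+x_2))$ for $|t|\le\eps$. Reading off the doubling step inside the proof of Lemma~\ref{bootstrap}, the limit on $[-2\eps,2\eps]$ is $t\mapsto\eta(t/2)^2$, which on that interval equals
\[
\exp_G\!\Big(\tfrac{t}{2}(x_1+x_2)\Big)^2 = \exp_G(t(x_1+x_2)),
\]
since $t\mapsto\exp_G(t(x_1+x_2))$ is a one-parameter subgroup. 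Iterating the doubling identification yields $\eta(t)=\exp_G(t(x_1+x_2))$ on $[-2^k\eps,2^k\eps]$ for all $k\in\N$, hence on all of $\R$. This is exactly the Trotter property for the pair $(x_1,x_2)$, and since $x_1,x_2$ were arbitrary, $G$ has the Trotter property.

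The proof is really just plumbing once Lemma~\ref{bootstrap} is in hand, so I do not expect any genuine obstacle; the only mildly delicate point is making sure one correctly identifies the global limit as the one-parameter subgroup $t\mapsto\exp_G(t(x_1+x_2))$, but this is immediate from the doubling step together with the one-parameter-subgroup identity $\exp_G(t(x_1+x_2))=\exp_G(\tfrac{t}{2}(x_1+x_2))^2$.
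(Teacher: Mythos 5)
Your proposal is correct and follows essentially the same route as the paper: apply Lemma~\ref{bootstrap} to $\gamma(t)=\exp_G(tx_1)\exp_G(tx_2)$ and then identify the global limit $\eta$ with the one-parameter group $t\mapsto\exp_G(t(x_1+x_2))$ via the doubling relation. The paper phrases the identification slightly more directly by noting $\eta(2t)=\lim_n\gamma_{2n}(2t)=\lim_n\gamma_n(t)^2=\eta(t)^2$, rather than appealing to the internals of the lemma's proof, but the argument is the same.
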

\begin{prf}
Setting $\gamma:\R\to G\,,\,\gamma(t):=\exp_G({tx_1})\exp_G({tx_2})$ in Lemma \ref{bootstrap} implies that 
the right hand side of \eqref{expx1x2eq} 
is uniformly convergent  on
compact subsets of $\R$
to a map $\eta:\R\to G$. 
For every $t\in\R$ and $\gamma_n(t) := \gamma\big(\frac{t}{n}\big)^n$ 
we have 
\[
\eta(2t)
=\lim_{n\to\infty}\gamma_{2n}(2t) 
=\lim_{n\to\infty}\gamma_{n}(t)^2 
=\eta(t)^2.\] 
Since
$\eta(t)=\exp_G(t(x_1+x_2))$ for $t\in[-\varepsilon,\varepsilon]$, 
it follows that 
$\eta(t)=\exp_G(t(x_1+x_2))$ for every $t\in\R$.
\end{prf}

\subsubsection*{Covering groups}
Since the condition in Proposition 
\ref{fromepstocpt}
is local,  
we immediately 
obtain the following statement. 

\begin{cor} If $q_G \: \hat G \to G$ is a covering morphism of Lie groups, 
then $G$ has the Trotter property if and only if $\hat G$ has the Trotter property. 
\end{cor}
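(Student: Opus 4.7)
The plan is to show each direction separately, using Proposition \ref{fromepstocpt} to reduce the Trotter property to uniform convergence on a short interval, together with the path-lifting property of the covering morphism. Throughout I identify $\L(\hat G)$ with $\L(G)$ via the isomorphism $\dd q_G$, so that $q_G\circ \exp_{\hat G} = \exp_G$. Given $x_1,x_2\in\L(\hat G)=\L(G)$, write
\[ \hat\gamma_n(t) := \Big(\exp_{\hat G}\Big(\tfrac{t}{n}x_1\Big)\exp_{\hat G}\Big(\tfrac{t}{n}x_2\Big)\Big)^n,\quad \hat\gamma(t) := \exp_{\hat G}(t(x_1+x_2)), \]
and let $\gamma_n,\gamma$ denote the analogous curves in $G$, so that $q_G\circ\hat\gamma_n=\gamma_n$ and $q_G\circ\hat\gamma=\gamma$.

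For the direction ``$\hat G$ Trotter $\Rightarrow$ $G$ Trotter'', one simply applies $q_G$ to the uniform convergence $\hat\gamma_n\to\hat\gamma$ on compact subsets of $\R$ and invokes Lemma~\ref{uniformcnv} to conclude that $\gamma_n\to\gamma$ uniformly on compacts, which is exactly the Trotter property for $G$.

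The converse is the substantive direction. Assume $G$ has the Trotter property. By Proposition~\ref{fromepstocpt} it suffices to establish the uniform convergence $\hat\gamma_n\to\hat\gamma$ on some interval $[-\varepsilon,\varepsilon]$. Since $q_G$ is a covering morphism, choose an open neighborhood $U_0$ of $\1\in\hat G$ on which $q_G$ restricts to a homeomorphism $U_0\to V_0$, and let $s\colon V_0\to U_0$ denote its inverse. Since $\hat\gamma$ is continuous with $\hat\gamma(0)=\1$, shrink $\varepsilon>0$ so that $\hat\gamma([-\varepsilon,\varepsilon])$ is a compact subset of $U_0$; by shrinking further we may also arrange that it has an open neighborhood whose closure lies in $U_0$. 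By the Trotter property of $G$ we have $\gamma_n\to\gamma$ uniformly on $[-\varepsilon,\varepsilon]$, and this forces $\gamma_n([-\varepsilon,\varepsilon])\subseteq V_0$ for all sufficiently large $n$.

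The key step is now a path-lifting argument: for such $n$, both $\hat\gamma_n|_{[-\varepsilon,\varepsilon]}$ and $s\circ\gamma_n|_{[-\varepsilon,\varepsilon]}$ are continuous lifts of $\gamma_n$ along $q_G$ starting at $\1$, so by uniqueness of lifts they coincide. Applying Lemma~\ref{uniformcnv} to the continuous map $s$ on the compact set $\gamma([-\varepsilon,\varepsilon])\subseteq V_0$ converts the uniform convergence $\gamma_n\to\gamma$ on $[-\varepsilon,\varepsilon]$ into the uniform convergence $\hat\gamma_n=s\circ\gamma_n\to s\circ\gamma=\hat\gamma$ on the same interval, which is what we needed. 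The only delicate point is ensuring that the approximating curves stay in the trivializing neighborhood $V_0$ for large $n$, which is why one picks $\varepsilon$ so that the image of the limit curve lies \emph{strictly} inside $U_0$.
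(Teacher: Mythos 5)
Your proof is correct and follows essentially the paper's own route: the paper disposes of the corollary in one line by observing that the condition in Proposition~\ref{fromepstocpt} is local, and your argument is exactly that locality spelled out — the easy direction by pushing forward with $q_G$ via Lemma~\ref{uniformcnv}, the converse by working inside a trivializing neighborhood with the local section $s$, uniqueness of path lifts to identify $\hat\gamma_n$ with $s\circ\gamma_n$, and the uniform-convergence transfer (the proof of Lemma~\ref{uniformcnv} adapts verbatim to the only locally defined map $s$, since it only uses continuity near the compact set $\gamma([-\eps,\eps])$). Nothing further is needed.
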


\subsubsection*{Locally exponential groups}

\begin{proposition}
\mlabel{locexpprp}
Every locally exponential Lie group has the Trotter property.
\end{proposition}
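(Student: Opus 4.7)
The plan is to combine Proposition \ref{fromepstocpt}, which reduces the Trotter property to uniform convergence on some interval $[-\varepsilon,\varepsilon]$ around~$0$, with a direct analysis of the group multiplication read in the exponential chart at~$\1$. Fix $x_1, x_2 \in \g := \L(G)$. Using local exponentiality I would choose an open $0$-neighborhood $U \subseteq \g$ on which $\exp_G$ is a diffeomorphism onto an open subset of $G$, together with a smaller open $0$-neighborhood $W \subseteq U$ with $\exp_G(W)\exp_G(W) \subseteq \exp_G(U)$. The local multiplication
\[ m\: W \times W \to U, \qquad m(x,y) := \exp_G^{-1}(\exp_G(x)\exp_G(y)), \]
is smooth and satisfies $m(x,0)=x$ and $m(0,y)=y$, so Taylor expansion in $s$ yields a representation $\sigma(s) := m(sx_1, sx_2) = s(x_1+x_2) + s^2 \rho(s)$ on a neighborhood $J$ of $0 \in \R$ with $\rho\:J \to \g$ smooth.

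Next I would observe that for $|t| \leq \varepsilon$ with $\varepsilon$ small enough, all partial iterates of $m$ starting from $\sigma(t/n)$ stay in $W$, so that the left-hand side of the Trotter formula becomes $\exp_G(m^{(n)}(\sigma(t/n)))$, where $m^{(n)}$ denotes the $n$-fold iterated product. The one-parameter subgroup identity $\exp_G((t/n)(x_1+x_2))^n = \exp_G(t(x_1+x_2))$ translates in the chart into the exact equality $m^{(n)}((t/n)(x_1+x_2)) = t(x_1+x_2)$, so proving the Trotter property on $[-\varepsilon,\varepsilon]$ reduces to showing
\[ m^{(n)}(\sigma(t/n)) \longrightarrow t(x_1+x_2) \qquad (n \to \infty) \]
uniformly in $t \in [-\varepsilon,\varepsilon]$.

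I expect the quantitative analysis of $m^{(n)}$ to be the main obstacle, since it is an $n$-fold composition whose differential at $0$ scales like $n$, while the perturbation $\sigma(t/n) - (t/n)(x_1+x_2)$ is only of order $(t/n)^2$. My approach would be to write $m(x,y) = x + y + b(x,y)$, with $b$ smooth and vanishing whenever $x=0$ or $y=0$; then for every continuous seminorm $p$ on $\g$ one obtains, by smoothness of $b$ and Lemma~\ref{lem:2.3}, a smaller neighborhood $W_p \subseteq W$, a continuous seminorm $q_p \geq p$, and a constant $C_p > 0$ such that $p(b(x,y)) \leq C_p\, q_p(x)\, q_p(y)$ on $W_p$. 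An inductive estimate then controls the error at step $n$, in the seminorm $p$, by a quantity of order $n \cdot (t/n)^2 = t^2/n$, which tends to $0$ as $n \to \infty$. Since this holds for every continuous seminorm on $\g$, the required uniform convergence follows, and Proposition \ref{fromepstocpt} completes the proof.
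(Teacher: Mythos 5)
Your reduction via Proposition~\ref{fromepstocpt} and the passage to the exponential chart are fine, but the quantitative step that is supposed to carry the proof does not work as stated, and it also hides the one observation that makes the whole iteration analysis unnecessary. Writing $m(x,y)=x+y+b(x,y)$ and iterating, the $k$-th partial product $z_k$ (with $z_1=\sigma(t/n)$, $z_{k+1}=m(z_k,\sigma(t/n))$) has size of order $k|t|/n$, not $|t|/n$; so even granting a bilinear-type bound $p(b(x,y))\leq C_p\,q_p(x)q_p(y)$, the $k$-th step contributes an error of order $(k|t|/n)(|t|/n)$, and summing over $k=1,\dots,n$ gives an accumulated error of order $t^2$, not $n\cdot(t/n)^2=t^2/n$. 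Thus the claimed inductive estimate does not yield convergence. Moreover, the uniform seminorm estimate for $b$ on a whole $0$-neighborhood is not a consequence of smoothness plus Lemma~\ref{lem:2.3} in a general (non-normable) locally convex model space; Lemma~\ref{lem:2.3} concerns uniform continuity on compact sets and gives no product-type bound. Finally, the estimate you invoke is only available on the chart domain, while you would also need to know a priori that all partial products stay in $W$ uniformly in $n$ and $t$.

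The missing idea is that no estimate is needed at all: since $\gamma(s):=\exp_G(sx_1)\exp_G(sx_2)=\exp_G(\sigma(s))$ for small $s$, the one-parameter group property gives $\gamma(t/n)^n=\exp_G(\sigma(t/n))^n=\exp_G\bigl(n\sigma(t/n)\bigr)$ \emph{exactly} (in your notation, $m^{(n)}(\sigma(t/n))=n\sigma(t/n)$ on the nose -- you used this identity for the straight line $t(x_1+x_2)$ but not for $\sigma$ itself, where it kills the error term $b$ completely). With your Taylor expansion $\sigma(s)=s(x_1+x_2)+s^2\rho(s)$, equivalently $\sigma(s)=s\delta(s)$ with $\delta$ continuous and $\delta(0)=x_1+x_2$, one gets $n\sigma(t/n)=t\,\delta(t/n)\to t(x_1+x_2)$ uniformly on compact subsets of $\R$, and then Lemma~\ref{uniformcnv} (continuity of $\exp_G$, viewing $(\g,+)$ as a topological group) gives the uniform convergence of $\gamma(t/n)^n$ to $\exp_G(t(x_1+x_2))$ on all compact subsets directly; this is the paper's proof, and it does not even need Proposition~\ref{fromepstocpt}.
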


\begin{prf} Let $G$ be a locally exponential Lie group and 
$x,y \in \g = \L(G)$. Then 
\[ \gamma:\R\to G, \quad \gamma(t):=\exp_G({tx})\exp_G({ty}) \] 
is a smooth curve. Since $G$ is locally exponential, 
there exists an $\eps > 0$ and a smooth curve 
$\beta \: [-\eps, \eps] \to \g$ with 
$\gamma(t) = \exp_G({\beta(t)})$ for $|t| \leq \eps$. 
Then $x + y = \gamma'(0) = \beta'(0)$ follows from 
$T_0(\exp_G) = \id$. 
By Taylor's formula 
$\beta(t) = t\delta(t)$ for a continuous function $\delta$ on $[-\eps,\eps]$ 
satisfying $\delta(0) = \beta'(0)$. 
Then 
\[ \beta_n(t) := n\beta\Big(\frac{t}{n}\Big)= t\delta\Big(\frac{t}{n}\Big) \] 
converges uniformly on every compact subset of $\R$ to 
the curve $\eta(t) := t(x+y)$. 
Considering $(\g,+)$ as a topological group, Lemma \ref{uniformcnv} implies that 
the sequence $\gamma_n = \exp_G \circ \beta_n$ converges uniformly on compact subsets of $\R$
to $\exp_G\circ\eta$. 
\end{prf}

\begin{ex}
\mlabel{ex:3.6}
Proposition
\ref{locexpprp} implies that mapping groups, and in particular loop groups, have the Trotter property.  
If $M$ is a compact manifold and $K$ is a locally exponential Lie group 
with the Lie algebra $\fk$, then it follows from \cite[Th.~IV.1.12]{Ne06} that,
for $r \in \N_0 \cup \{\infty\}$, the 
mapping group $\cC^r(M,K)$ is a locally exponential 
Lie group with Lie algebra $\cC^r(M,\fk)$. 
Note also that central extensions of locally exponential Lie groups are 
locally exponential by \cite[Th.~IV.2.11]{Ne06}.  
\end{ex}

\subsubsection*{Diffeomorphism groups}
Our next goal is to prove that groups of automorphisms 
of principal bundles on compact smooth manifolds have the Trotter property. To this end, we begin by a closer look at the topology of these groups. 

If $X, Y$ are topological spaces, then we write 
$\cC(X,Y)$ for the space of continuous maps $X \to Y$, endowed 
with the compact open topology. Suppose that $X$ is locally compact. 
Then the group $\Homeo(X)$, endowed with 
the topology inherited from the embedding 
\[ \Homeo(X) \to \cC(X,X)^2, \quad \phi \mapsto 
(\phi, \phi^{-1}), \] 
is a topological group 
(\cite[Cor.~9.15]{Str06}). If, in addition, $X$ is a $\cC^k$-manifold 
for some $k \in \N$,  
then we endow the group $\Diff^k(M)$ of $\cC^k$-diffeomorphisms with the 
group topology inherited from the embedding 
\[ T^k \: \Diff^k(X) \to \Homeo(T^k(X)), \quad 
\phi \mapsto T^k(\phi).\]
This topology is called the {\it modified compact open 
$\cC^k$-topology}. If $X$ is a smooth manifold, then 
we endow the group $\Diff(X) = \Diff^\infty(X)$ with the group topology inherited from 
the embedding
\[  \Diff(X) \to \prod_{k \in \N_0} \Homeo(T^k(X)), \quad 
\phi \mapsto (T^k(\phi))_{k \in \N_0}\]
where $\N:=\{1,2,3,\ldots\}$ and $\N_0=\N\cup\{0\}$. 
This topology is called the {\it modified smooth compact open topology}. 

In general, this topology does not turn $\Diff(X)$ into a Lie 
group. A typical example is the disjoint union 
$X = \coprod_{n \in \N} \bS^1$ of infinitely many circles (in 
this case the group $\Diff(X)$ contains
the compact group $\T^\N$ as a topological subgroup, and this is not 
compatible with a manifold structure). 
However, there are many important cases where 
topological subgroups of $\Diff(X)$ actually carry natural Lie group 
structures.

\begin{exs} \mlabel{ex:diff} The following groups carry Lie group 
structures compatible with their modified smooth compact open topology. 

(a) If $M$ is a compact manifold, then $\Diff(M)$ carries a natural Lie group 
structure (\cite{Ha82}), \cite{Mil84}, \cite{Ne06}). 
From the smoothness of the action of the Lie group $\Diff(M)$ 
on $M$, one easily derives that the modified smooth compact open topology 
is coarser than the Lie topology, but the construction of charts for the manifold structure 
on $\Diff(M)$ (see \cite[Ex. II.3.14]{Ne06}) easily implies that the two topologies 
coincide. 

(b) The group $\Aut(P) = \Diff(P)^K$ of automorphisms of a 
principal $K$-bundle $q \: P \to M$ over a compact smooth manifold $M$. 
It is a Lie group extension of the Lie group $\Diff(M)$  
by the gauge group $\Gau(P)$ (\cite{Wo07}, \cite{ACMM89}). 

(c) The group $\Aut(\bV)$ of automorphisms of a vector bundle 
over a compact smooth manifold can be identified 
with the automorphism group of the corresponding frame bundle 
$\Fr(\bV)$, which is a principal $\GL(V)$-bundle. 
Therefore (b) applies to $\Aut(\bV)$. 

(d) The higher tangent bundles $T^k(M)$ of a compact smooth manifold 
carry natural bundle structures, which, for $k > 1$, are not vector 
bundles. They are {\it multilinear bundles} (cf.\ \cite[Sect.~15]{Be08}).  
This implies that they are also associated to a principal 
$K$-bundle $q_P \:  P \to M$, where $K$ 
is a finite-dimensional Lie group of polynomial 
diffeomorphisms of the fiber, such that 
$\Aut(T^k(M)) \cong \Aut(P)$. As in (c), we thus obtain 
on $\Aut(T^k(M))$ a Lie group structure from (b). 
\end{exs}

\begin{rem} \mlabel{rem:vecflow} Let $\cV(M)$ denote the space of smooth vector 
fields on the smooth manifold $M$. If 
$X \in \cV(M)$ is complete, we write 
$(\Phi^X_t)_{t \in \R}$ for the corresponding smooth flow. 
Then the smoothness of the flow map 
$\Phi^X \: \R \times M \to M$ and $\Phi^X_{-t} = (\Phi^X_t)^{-1}$ 
implies that \break $\Phi^X \: \R \to \Diff(M)$ is a continuous 
one-parameter group with respect to the modified 
smooth compact open topology. Here we use that the induced 
flow $T^k(\Phi^X_t) = \Phi^{T^k(X)}_t$ on $T^k(M)$ is also smooth. 

If $M$ is compact, then $\cV(M) = \L(\Diff(M))$ is the Lie algebra 
of the group of all diffeomorphisms of $M$ and the exponential 
function is given by the time-$1$-flow $\exp(X) = \Phi^X_1$.  
Note that the compactness of $M$ implies that 
every vector field on $M$ is complete. 

Similarly, the Lie algebra $\aut(P)$ of the automorphism 
group $\Aut(P)$ of a principal $K$-bundle over $M$ 
is the Lie algebra $\cV(P)^K$ of $K$-invariant vector fields. 
All these vector fields are complete and 
$\exp(X) = \Phi^X_1$ defines the exponential function 
$\aut(P) \to \Aut(P)$. 
\end{rem}

\begin{rem} 
\label{rem:3.9}

(a) For the following we recall from 
Theorem 1 in \cite[Ch.~2, \S 4.1]{Bou89} that 
every compact space $X$ carries a unique uniform structure compatible 
with the topology. 

(b) We also recall that, for a locally compact space $X$ 
and a uniform space $Y$, on $\cC(X,Y)$ the topology of uniform 
convergence on compact subsets coincides with the compact open topology  
(cf.\ \cite[Ch.~10]{Bou74}). 

(c) Putting both pieces together, we see that, 
if $X$ is a compact space and $Y$ a compact subset of a topological space 
$Z$, then the compact open topology on the subspace 
$\cC(X,Y) \subeq \cC(X,Z)$ coincides with the topology of uniform convergence.  
This means that, for a sequence $f_n \: X \to Z$ of continuous map 
for which $\bigcup_{n \in \N} f_n(X)$ has compact closure $Y$, 
uniform convergence in $\cC(X,Y)$ is equivalent to convergence in 
$\cC(X,Z)$ in the compact open topology. 
\end{rem}

\begin{lem} \mlabel{lem:nelson} 
Let $M$ be a smooth manifold and 
$X, Y \in \cV(M)$ be complete vector fields for which 
$X + Y$ is also complete. Then  for each 
compact subset $C \subeq M$ 
there exists
an $\eps > 0$ such that 
\begin{equation}
\mlabel{phiphixphiy}
 \Phi^{X+Y}_t = \lim_{n \to \infty} \Big( \Phi^X_{t/n} \circ \Phi^Y_{t/n}\Big)^n 
 \end{equation} 
holds uniformly on $|t| \leq \eps$, where 
both sides are considered as 
functions from $\R$ into $\cC(C,M)$, endowed with the compact open topology.

\end{lem}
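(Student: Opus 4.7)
\begin{Proof*}[Proof Plan]
The plan is to reduce the statement to a classical ODE estimate in local coordinates, combining a Taylor expansion to second order with a telescoping argument. By Remark~\ref{rem:3.9}(c), once all iterates lie in a fixed compact subset of $M$, convergence in $\cC(C,M)$ amounts to uniform convergence with respect to any compatible distance. So first I would use the continuity of $\Phi^{X+Y}$ together with a rough a priori bound on the iterates to find a compact neighborhood $C'\supseteq C$ and an $\eps>0$ such that both $\Phi^{X+Y}_t(C)\subseteq C'$ and $(\Phi^X_{t/n}\Phi^Y_{t/n})^k(C)\subseteq C'$ hold for all $|t|\le\eps$, $n\in\N$, and $0\le k\le n$. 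The key estimate here is that a single step $\Phi^X_{t/n}\Phi^Y_{t/n}$ moves a point in $C'$ by at most $O(t/n)$ uniformly (using local bounds on $X,Y$ together with Gronwall), so $n$ iterated steps aggregate to $O(t)$-displacement, which remains inside $C'$ for $\eps$ sufficiently small.

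Next I would fix a Riemannian distance $d$ on $M$ and establish, for $p,q\in C'$ and small $|s|$, the two uniform bounds
\[
 d\bigl(\Phi^X_s\Phi^Y_s(p),\,\Phi^{X+Y}_s(p)\bigr)\,\le\,Ks^2
 \qquad\text{and}\qquad
 d\bigl(\Phi^{X+Y}_s(p),\,\Phi^{X+Y}_s(q)\bigr)\,\le\,e^{L|s|}d(p,q).
\]
The first bound follows by expanding both $\Phi^X_s\Phi^Y_s(p)$ and $\Phi^{X+Y}_s(p)$ to order $s^2$ in a chart around $p$; on the compact set $C'$, the vector fields and their first derivatives are bounded, making the $O(s^2)$ term uniform (the leading discrepancy is $\shalf s^2[X,Y](p)$, but only the bound is needed). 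The second is the standard Lipschitz estimate for $\Phi^{X+Y}$ coming from the variational equation on $C'$.

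To conclude, set $\psi_s:=\Phi^X_s\Phi^Y_s$ and $\sigma_s:=\Phi^{X+Y}_s$, and consider the interpolation $a_k:=\sigma_{t/n}^{n-k}\circ\psi_{t/n}^k(p)$, so that $a_0=\sigma_t(p)$ and $a_n=\psi_{t/n}^n(p)$. Telescoping then yields
\[
 d\bigl(\psi_{t/n}^n(p),\,\sigma_t(p)\bigr)\,\le\,\sum_{k=0}^{n-1}d(a_{k+1},a_k)\,\le\,\sum_{k=0}^{n-1}e^{L|t|}K(t/n)^2\,=\,\frac{e^{L|t|}Kt^2}{n},
\]
where each term is bounded via the Lipschitz estimate on $\sigma_{t/n}^{n-k-1}$ applied to the one-step error at $q_k:=\psi_{t/n}^k(p)\in C'$. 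This bound is uniform in $p\in C$ and $|t|\le\eps$ and tends to $0$ as $n\to\infty$; hence \eqref{phiphixphiy} holds in the asserted uniform sense. The most delicate step, I expect, will be the bootstrap in the first paragraph: the telescoping argument yields useful information only when the intermediate iterates are confined to a common compact set on which $K$ and $L$ are uniform, so this confinement must be established independently \emph{before} the sharp $O(t^2/n)$ estimate can be deployed.
\end{Proof*}
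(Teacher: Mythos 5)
Your plan is correct in substance, but it follows a genuinely different route from the paper. The paper's proof is very short: it invokes Nelson's local Trotter theorem (\cite[\S 4, Th.~1]{Nel69}), which already gives, for every point $m_0\in M$, a neighborhood $V$ and an $\eps_V>0$ on which the convergence in \eqref{phiphixphiy} is uniform for $|t|\le\eps_V$; one then covers $C$ by finitely many such neighborhoods, takes $\eps$ to be the minimum of the $\eps_V$'s, and uses Remark~\ref{rem:3.9} to identify uniform convergence (all values lying in a fixed compact set $D$) with convergence in the compact open topology of $\cC(C,M)$. You instead re-prove the quantitative core of Nelson's theorem from scratch: confinement of the iterates to a compact set by a bootstrap, a uniform one-step error $O(s^2)$ from a second-order Taylor expansion in charts, a Lipschitz (Gronwall-type) estimate for the exact flow, and the standard Chernoff telescoping sum giving an explicit $O(t^2/n)$ rate uniform in $p\in C$ and $|t|\le\eps$. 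What your approach buys is self-containedness and an explicit convergence rate; what the paper's approach buys is brevity, since the delicate steps you rightly flag (the confinement bootstrap, the uniformity of the chart-wise Taylor remainder over $C'$, and the need to take all bounds on a slightly larger compact neighborhood rather than on $C'$ itself) are exactly what the citation to Nelson absorbs. Two small points of care if you carry your plan out in full: the Lipschitz estimate must be applied for flow times up to $|t|$, not just $t/n$, so the constant has to be taken uniform over $[-\eps,\eps]$ on a compact set containing all relevant trajectories (smoothness of the flow map on $[-\eps,\eps]\times C''$ gives this directly, so the exponential form is not essential); and you share with the paper the final step of invoking Remark~\ref{rem:3.9}(c) to pass from uniform metric convergence on a fixed compact target to the compact open topology on $\cC(C,M)$, which you do mention.
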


\begin{prf} From \cite[\S 4, Th.~1]{Nel69} it follows that, 
every point $m_0 \in M$ has a neighborhood $V$ for which there exists an 
$\eps_V > 0$ such that 
$\gamma_n(t) := \big( \Phi^X_{t/n} \circ \Phi^Y_{t/n}\big)^n$ converges 
to $\Phi^{X+Y}_t$ uniformly on $V$ and for $|t| \leq \eps_V$. 
Covering $C$ with finitely many such neighborhoods 
$V_1, \ldots, V_N$, we put $\eps := \min \{ \eps_{V_1}, \ldots, \eps_{V_N}\}$. 
For $|t| \leq \eps$ and on $C$,
both sides of \eqref{phiphixphiy}
attain values 
in some compact subset $D \subeq M$. By Remark 
\ref{rem:3.9}
the topology of uniform convergence on
 $\cC(C,D)$ coincides with the compact open topology 
induced  
 from $\cC(C,M)$.
 \end{prf}

\begin{thm} 
\label{thm:prinbun} 
Let $K$ be a finite-dimensional Lie group 
and $q \: P \to M$ be a smooth $K$-principal bundle 
over the compact smooth manifold $M$. 
Then the Lie group $\Aut(P) = \Diff(P)^K$ of bundle automorphisms 
has the Trotter property.
\end{thm}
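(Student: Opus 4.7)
The strategy is to combine Proposition \ref{fromepstocpt}, the description of the topology of $\Aut(P)$ from Example \ref{ex:diff}(b), and Lemma \ref{lem:nelson} (Nelson's theorem on flows). Fix $x_1, x_2 \in \aut(P) = \cV(P)^K$; by Remark \ref{rem:vecflow}, $X := x_1$, $Y := x_2$ and $X+Y$ are complete $K$-invariant vector fields, and $\exp(Z) = \Phi^Z_1$. Setting $\gamma(t) = \Phi^X_t \circ \Phi^Y_t$, $\gamma_n(t) = \gamma(t/n)^n$, and $\eta(t) = \Phi^{X+Y}_t$, the task is to show that $\gamma_n \to \eta$ uniformly on compact subsets of $\R$ in the topology of $\Aut(P)$. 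By Proposition \ref{fromepstocpt} it suffices to produce some $\eps > 0$ with uniform convergence on $[-\eps, \eps]$. Since $\Aut(P)$ embeds topologically into $\prod_{k \geq 0} \Homeo(T^k(P))$ via $\phi \mapsto (T^k(\phi))_k$, and a basic $\1$-neighborhood in this product is determined by finitely many compact subsets (one at each relevant level $k$), it is enough to establish, for every $k \in \N_0$ and every compact $C \subseteq T^k(P)$, uniform convergence of $T^k(\gamma_n(t))|_C \to T^k(\eta(t))|_C$, and of the inverses, on $[-\eps,\eps]$ in $\cC(C, T^k(P))$.

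The key functorial identities are
\[ T^k(\gamma_n(t)) = \bigl(\Phi^{T^k(X)}_{t/n} \circ \Phi^{T^k(Y)}_{t/n}\bigr)^n, \qquad T^k(\eta(t)) = \Phi^{T^k(X)+T^k(Y)}_t,\]
since $T^k$ is a functor and $T^k(X+Y) = T^k(X)+T^k(Y)$. By Remark \ref{rem:vecflow} the three vector fields $T^k(X)$, $T^k(Y)$, $T^k(X+Y)$ on $T^k(P)$ are complete, so Lemma \ref{lem:nelson} applied to them furnishes, for every compact $D \subseteq T^k(P)$, some $\eps_D > 0$ giving uniform convergence of $T^k(\gamma_n(t))|_D \to T^k(\eta(t))|_D$ on $[-\eps_D, \eps_D]$.

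To pass from this small time interval to the prescribed $[-\eps,\eps] \times C$, I will choose $D$ to be a compact neighborhood of $T^k(\eta)([-\eps,\eps]\times C)$ with sufficient buffer that all iterated trajectories $T^k(\gamma_n(s))(C)$ remain inside $D$ for $s$ in the growing intervals met during the doubling. Then the identity $\gamma_{2n}(t) = \gamma_n(t/2)^2$ together with the odd analogue from \eqref{eq:gamma2n+1} and Lemma \ref{uniformcnv} allow uniform convergence on $[-\eps_D,\eps_D]\times D$ to be bootstrapped step-by-step, exactly as in the proof of Lemma \ref{bootstrap}, to $[-2^j\eps_D, 2^j\eps_D]\times C$ for any $j$; after finitely many doublings the interval covers $[-\eps,\eps]$. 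The inverses $T^k(\gamma_n(t)^{-1}) = (\Phi^{T^k(Y)}_{-t/n}\circ\Phi^{T^k(X)}_{-t/n})^n$ are handled by the same argument with the roles of $X$ and $Y$ swapped.

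The main obstacle will be the compact-set bookkeeping in this bootstrap: the $\eps$ produced by Lemma \ref{lem:nelson} shrinks as its compact subset grows, so one must choose $D$ (and a buffer open neighborhood of $T^k(\eta)([-\eps,\eps]\times C)$) carefully, so that $T^k(\eta)([-\eps,\eps]\times C) \subseteq D$ and the iterated trajectories of $\gamma_n$ on $C$ stay inside $D$ for sufficiently large $n$. This ensures that a single invocation of Lemma \ref{lem:nelson} on $D$ is enough, with no reinvocation at successively smaller scales. The remaining ingredients—functoriality of $T^k$, coordinate-wise convergence in the product, and completeness of tangent lifts—are essentially formal.
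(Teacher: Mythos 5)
Your overall frame (reduce to the levels $T^k(P)$, get small-time convergence from Lemma \ref{lem:nelson}, then globalize in time) matches the paper's, but the decisive middle step is different and, as written, has a gap. You never use the $K$-invariance of the fields or the compactness of $M$ beyond completeness, and instead you globalize in time on a fixed compact set $C\subeq T^k(P)$ by running the doubling argument ``exactly as in the proof of Lemma \ref{bootstrap}'' after a single invocation of Lemma \ref{lem:nelson} on a large compact set $D$. But Lemma \ref{bootstrap} is a statement about a topological group, where squaring is globally defined and Lemma \ref{uniformcnv} applies; in $\cC(C,T^k(P))$ the identity $\gamma_{2n}(t)=\gamma_n(t/2)^2$ does not preserve ``uniform convergence on $C$'': to pass from times $|t|\le 2^j\eps_D$ to $|t|\le 2^{j+1}\eps_D$ on $C$ you must already have uniform convergence, for times up to $2^j\eps_D$, on thickened images $\eta(s)(C)$, i.e.\ on compact sets other than $C$. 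So the induction has to carry a nested family of domains $D\supeq D_1\supeq\cdots\supeq C$, and the number of doublings needed depends on $\eps_D$, which depends on $D$, which must be chosen before the doublings are counted --- a circularity your sketch does not resolve. Hence ``to $[-2^j\eps_D,2^j\eps_D]\times C$ for any $j$'' is precisely the nontrivial point, not a formal repetition of Lemma \ref{bootstrap}; the fact that your argument would need neither $K$-invariance nor compactness of $M$ is a warning sign that more is required here. Your appeal to Proposition \ref{fromepstocpt} is also only as good as this step, since it presupposes one $\eps$ valid for all levels $k$ and all compact sets $C$ simultaneously.

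The paper avoids all of this by exploiting exactly the hypotheses you leave unused: since the fields are $K$-invariant and $M$ is compact, there is a compact $C\subeq P$ with $C^0\cdot K=P$, and equivariance of the flows propagates Nelson's convergence on $C$ (with its single $\eps$) to every compact subset of $P$; at level $r$ the same trick, applied to the principal $H$-bundle to which $T^r(P)$ is associated (Example \ref{ex:diff}(d)), gives uniform convergence on all of $T^r(P)$ for a single $\eps_r$. After handling inverses by exchanging $X$ and $Y$, the time-globalization is then done inside the topological group $\Homeo(T^r(P))$ by Lemma \ref{bootstrap}, level by level; since the $\eps_r$ depend on $r$, the paper does not use Proposition \ref{fromepstocpt} for $\Aut(P)$, but coordinate-wise uniform convergence on compact $t$-intervals suffices for the product topology. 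If you want to keep your route, the compact-set bookkeeping must be turned into an actual induction over nested domains; the cleaner repair is to insert the equivariance/compactness step, which is the real key idea of the theorem.
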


\begin{prf}  Let $X, Y \in \aut(P) = \cV(P)^K$. We need to show that 
\begin{equation}
  \label{eq:trot-vec2}
\Phi^{X+Y}_t = \lim_{n \to \infty} \Big( \Phi^X_{t/n} \circ \Phi^Y_{t/n}\Big)^n
\end{equation}
holds uniformly on compact subsets of $\R$ with respect to the 
modified smooth compact open topology on $\Aut(P)$. 
We thus have to show that, for every $r \in \N_0$, 
we have 
\begin{equation}
  \label{eq:trot-vec3}
\Phi^{T^r(X)+T^r(Y)}_t = \lim_{n \to \infty} \Big( \Phi^{T^r(X)}_{t/n} 
\circ \Phi^{T^r(Y)}_{t/n}\Big)^n
\end{equation}
uniformly on compact subsets of $\R$ in 
the topological group $\Homeo(T^r(P))$ with respect to the 
modified compact open topology. 
Here  $T^r(X) \: T^r(P) \to T^{r+1}(P)$ denotes the canonical lift 
of $X \in \aut(P)$ to a smooth vector field on $T^r(P)$ which 
generates the flow 
$T^r(\Phi^X_t) = \Phi^{T^r(X)}_t$ on $T^r(P)$. 

%Since $T^k(M) \to M$ is a principal bundle over $M$ and 
%the canonical lifts $T^k(X) \: T^k(M) \to T^{k+1}(M)$ of vector 
%fields on $M$ generate bundle automorphisms, 
%the assertion follows from Lemma~\ref{lem:prinbun}.

From Lemma~\ref{lem:nelson} it follows that, 
for every compact subset $C \subeq P$, there exists an $\eps > 0$ such that 
\[ \Phi^{X+Y}_t\res_C = \lim_{n \to \infty} \Big( \Phi^X_{t/n} \circ \Phi^Y_{t/n}\Big)^n
\res_C \] 
holds uniformly on $[-\eps, \eps]$ in the space $\cC(C,P)$, endowed 
with the compact open topology. 
Since 
$M$ is compact, there exists a compact subset 
$C \subeq P$ whose interior $C^0$ satisfies $q_P(C^0) = M$. 
We now have $P = C^0\cdot K$, and for every compact subset 
$C' \subeq P$ there exists a finite subset $F \subeq K$ 
with $C' \subeq C\cdot F$. This implies that 
\[ \Phi^{X+Y}_t = \lim_{n \to \infty} \Big( \Phi^X_{t/n} \circ \Phi^Y_{t/n}\Big)^n\] 
actually holds uniformly on $[-\eps, \eps]$ in $\cC(P,P)$.

For $r \in \N$, let 
$C^r \subeq T^r(P)$ be a compact subset which is a neighborhood 
of $C$ in $T^r(P)$ (recall that $C\subeq P \subeq T^r(P)$). 
According to Lemma~\ref{lem:nelson}, there exists an $\eps_r > 0$ such that 
\[ \Phi^{T^r(X)+T^r(Y)}_t\res_{C^r} 
= \lim_{n \to \infty} \Big( \Phi^{T^r(X)}_{t/n} \circ \Phi^{T^r(Y)}_{t/n}\Big)^n
\res_{C^r} \] 
holds uniformly on $[-\eps_r, \eps_r]$ in the space $\cC(C^r,T^r(P))$, endowed 
with the compact open topology. As above, we see that the 
same statement holds with $\tilde C := (C^r)^0\cdot K$ instead of $C^r$ 
(here we use the canonical lift of the $K$-action on $P$ to $T^k(P)$). 
This is an open subset of $T^r(P)$ containing the canonical 
image of $P$. As $T^r(P)$ is associated to a principal 
bundle with some structure group $H$ over $P$ 
(cf.\ Example~\ref{ex:diff}(d)) and 
the maps $T^r(\phi)$, $\phi \in \Diff(P)$, are bundle
automorphisms, it follows with a similar argument as above, 
applied to the flows on the corresponding principal $H$-bundle, 
that actually 
\begin{equation}
  \label{eq:trot-vec} 
 \Phi^{T^r(X)+T^r(Y)}_t 
= \lim_{n \to \infty} \Big( \Phi^{T^r(X)}_{t/n} \circ \Phi^{T^r(Y)}_{t/n}\Big)^n
\end{equation}
holds uniformly on $[-\eps_r, \eps_r]$ in the space $\cC(T^r(P),T^r(P))$, endowed 
with the compact open topology. 

Since 
\[ (\Phi^{T^r(X)+T^r(Y)}_t)^{-1} 
= \Phi^{T^r(X)+T^r(Y)}_{-t} 
= \lim_{n \to \infty} \Big( \Phi^{T^r(Y)}_{-t/n} \circ \Phi^{T^r(X)}_{-t/n}\Big)^n, \]  
we can also apply the preceding argument with 
$X$ and $Y$ exchanged to see that, for some $0<\eps'_r < \eps_r$, 
 we have uniform convergence in $[-\eps'_r, \eps'_r]$ 
of \eqref{eq:trot-vec} 
 with respect to the 
modified compact open topology on the group $\Homeo(T^r(P))$. 
Lemma \ref{bootstrap} now implies that \eqref{eq:trot-vec} 
holds uniformly on compact subsets of $\R$.
\end{prf}

\begin{cor} \mlabel{cor:3.11} If $M$ is a compact smooth manifold, then the Lie group 
$\Diff(M)$ of smooth diffeomorphisms of $M$ has the Trotter property.
\end{cor}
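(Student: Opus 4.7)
The plan is to deduce Corollary \ref{cor:3.11} as the special case of Theorem \ref{thm:prinbun} in which one takes the trivial Lie group $K = \{\1\}$ together with the trivial principal $K$-bundle $P = M$ over $M$. For this choice, $K$-equivariance becomes an empty condition, so $\Aut(P) = \Diff(P)^K = \Diff(M)$ as abstract groups; and since the gauge group $\Gau(P) = \cC^\infty(M, K)$ is trivial, the Lie group extension from Example \ref{ex:diff}(b) collapses, identifying the Lie group $\Aut(P)$ with $\Diff(M)$ equipped with the Lie group structure from Example \ref{ex:diff}(a).

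The main point that requires verification, and the only potential obstacle, is that the proof of Theorem \ref{thm:prinbun} remains valid for this degenerate choice of $K$. The one place where the structure group is used at the level of $P$ itself is the identity $P = C^0 \cdot K$ for a compact $C \subeq P$ with $q_P(C^0) = M$, which is met trivially by taking $C = M$, thanks to the compactness of $M$. At the level of the higher tangent bundles $T^r(P) = T^r(M)$, which are not compact, the corresponding covering argument relies instead on the principal $H$-bundle structure with which $T^r(P)$ is equipped in Example \ref{ex:diff}(d), where $H$ is a nontrivial finite-dimensional Lie group of polynomial fiber diffeomorphisms. This structure is independent of whether $K$ is trivial, so the passage from uniform convergence on a compact piece $C^r$ to uniform convergence on an arbitrary compact subset of $T^r(M)$ goes through without modification.

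With these observations in place, the Trotter property for $\Diff(M)$ follows immediately from Theorem \ref{thm:prinbun}, without any further estimates or limit interchanges.
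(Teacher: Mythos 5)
Your proposal is correct and is essentially the paper's own (implicit) argument: Corollary~\ref{cor:3.11} is obtained from Theorem~\ref{thm:prinbun} by specializing to the trivial structure group $K=\{\1\}$ and the trivial bundle $P=M$, so that $\Aut(P)=\Diff(M)$ with its Lie group structure from Example~\ref{ex:diff}(a). Your additional check that the proof of Theorem~\ref{thm:prinbun} degenerates harmlessly in this case (taking $C=M$ and noting the $H$-bundle structure on $T^r(M)$ is independent of $K$) is a sensible but not essentially different elaboration.
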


\subsubsection*{Direct limits} 

\begin{ex} \mlabel{ex:2.8} 
If $G$ is the direct limit of a sequence $(G_n)$ of 
finite-dimensional Lie groups and  injective homomorphisms $G_n \into G_{n+1}$, 
then $\L(G)$ is isomorphic as a topological Lie algebra 
to the corresponding direct limit of finite-dimensional Lie algebras 
$\L(G_n)$ and $G$ has the Trotter property (cf.\ \cite[Prop.~4.6]{Gl05}). 
\end{ex} 

\subsubsection*{Semidirect products} 

Let $V$ be a complete locally convex space, $G$ a Lie group with a smooth 
exponential function and 
$\alpha \: G \to \GL(V)$ be a homomorphism defining a smooth action 
of $G$ on $V$, so that we can form the semidirect product 
Lie group $V \rtimes_\alpha G$. This Lie group has a smooth exponential 
function, given explicitly by 
\[ \exp_{V \rtimes_\alpha G}(v,x) 
= \big( \beta(x)v,  \exp_G(x)\big) \quad \mbox{ with } \quad 
\beta(x) =  \int_0^1 \alpha(\exp_G(sx))\, ds\] 
(\cite[Ex.~II.5.9]{Ne06}). 

\begin{prop} \mlabel{prop:trot-semdir} If $G$ has the Trotter property, 
then so does $H := V \rtimes_\alpha G$. 
\end{prop}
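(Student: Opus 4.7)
\medskip
\textbf{Proof plan.} The strategy is to invoke Proposition~\ref{fromepstocpt}, so that it suffices to verify the Trotter identity \eqref{expx1x2eq} for $H = V \rtimes_\alpha G$ on some interval $[-\eps,\eps]$. Fix $(v_1,x_1), (v_2,x_2) \in \L(H) = V \rtimes \g$. Using the explicit exponential formula $\exp_H(v,x) = (\beta(x)v, \exp_G(x))$ and the multiplication rule $(v,g)(w,h) = (v + \alpha(g)w, gh)$ in $H$, a straightforward induction on $n$ yields
\[ \Big(\exp_H\Big(\tfrac{t}{n}(v_1,x_1)\Big)\exp_H\Big(\tfrac{t}{n}(v_2,x_2)\Big)\Big)^n = \bigl(S_n(t),\, g_n(t)^n\bigr), \]
where $g_n(t) := \exp_G(\tfrac{t}{n}x_1)\exp_G(\tfrac{t}{n}x_2)$, the $V$-increment is $w_n(t) := \tfrac{t}{n}\beta(\tfrac{t}{n}x_1)v_1 + \tfrac{t}{n}\alpha(\exp_G(\tfrac{t}{n}x_1))\beta(\tfrac{t}{n}x_2)v_2$, and $S_n(t) := \sum_{k=0}^{n-1}\alpha(g_n(t)^k) w_n(t)$. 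The $G$-component $g_n(t)^n$ converges to $\exp_G(t(x_1+x_2))$ uniformly on compacts by the Trotter property of $G$, so the task reduces to analysing $S_n(t)$.

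The target limit of $S_n(t)$ is the $V$-coordinate of $\exp_H(t(v_1+v_2,x_1+x_2))$, which by the substitution $u = st$ equals $I(t) := \int_0^t \alpha(\exp_G(u(x_1+x_2)))(v_1+v_2)\,du$. The idea is to compare $S_n(t)$ with the Riemann sum
\[ R_n(t) := \tfrac{t}{n}\sum_{k=0}^{n-1}\alpha\Big(\exp_G\Big(\tfrac{kt}{n}(x_1+x_2)\Big)\Big)(v_1+v_2). \]
Since the integrand $u \mapsto \alpha(\exp_G(u(x_1+x_2)))(v_1+v_2)$ is a continuous $V$-valued function on the compact interval $[-\eps,\eps]$ and $V$ is complete, standard Riemann-sum theory gives $R_n(t) \to I(t)$ uniformly in $t \in [-\eps,\eps]$ in every continuous seminorm on $V$.

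The central remaining task is to show $S_n(t) - R_n(t) \to 0$ uniformly on $[-\eps,\eps]$. Continuity of $\alpha$ and $\beta$ first shows $\tfrac{n}{t}w_n(t) \to v_1+v_2$ uniformly in $t$. The rescaling identity $g_n(t)^k = g_k(\tfrac{kt}{n})^k$ (valid for $k \geq 1$), together with the Trotter property of $G$ applied to the exponent $k$, gives $g_n(t)^k \to \exp_G(\tfrac{kt}{n}(x_1+x_2))$ in $G$; continuity of $\alpha$ then transfers this to the operators $\alpha(g_n(t)^k)$. Splitting the sum into a block with $k \geq N$ (handled by the Trotter estimate combined with equicontinuity of $\alpha$ on a compact neighborhood of the orbit $\{\exp_G(u(x_1+x_2)) : u \in [-\eps,\eps]\}$) and a block with $k < N$ (where both $g_n(t)^k$ and $\exp_G(\tfrac{kt}{n}(x_1+x_2))$ sit near $\1_G$ and the total weight is $O(N/n)$), and using the factor $\tfrac{|t|}{n} \leq \eps/n$ per term to sum the errors, yields the uniform bound. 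The main obstacle is precisely the simultaneous uniformity in $k \in \{0,\dots,n-1\}$ and $t \in [-\eps,\eps]$: the Trotter convergence in $G$ is only asserted as the exponent tends to infinity, so the regime where $k$ is comparable to $n$ must be handled through the rescaling $g_n(t)^k = g_k(\tfrac{kt}{n})^k$ while the residual small-$k$ regime is treated by direct continuity estimates, and the two regimes then spliced via equicontinuity of $\alpha$.
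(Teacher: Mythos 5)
Your plan is essentially the paper's own proof: the same decomposition of the $n$-th power into $(S_n(t),g_n(t)^n)$, the same comparison of the $V$-component with a Riemann sum for $\int_0^t \alpha(\exp_G(u(x_1+x_2)))(v_1+v_2)\,du$, and the same key rescaling $g_n(t)^k=g_k(kt/n)^k$ with a split at a fixed threshold $N$, using compactness (Lemma~\ref{lem:2.2}) together with continuity of the action to make the estimates uniform in $k$ and $t$. The only slip is the phrase ``equicontinuity of $\alpha$ on a compact neighborhood of the orbit'' --- such a neighborhood need not exist since $G$ is not locally compact --- but what is actually needed, and what the paper uses, is the estimate $p(\alpha(g)c-c)\le\eps$ for $g$ in a $\1$-neighborhood of $G$ and $c$ in a compact set of vectors (plus equicontinuity of $\alpha(K)$ for $K\subeq G$ compact, both consequences of the continuity of the action), so your argument goes through unchanged.
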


\begin{prf} Let $X = (u,x), Y = (w,y) \in \L(H) \cong V \rtimes \g$, 
$z := x + y$, 
and consider the smooth curves 
\[ \Gamma(t) := \exp_H(tX) \exp_H(tY) 
\quad \mbox{ and } \quad 
\Gamma_n(t) := \Gamma(t/n)^n \] 
in $H$. In terms of the semidirect product structure, 
we write $\Gamma(t) = (\delta(t), \gamma(t))$ where
\[ 
\delta(t) = t \beta(tx)u + \alpha(\exp_G(tx))t \beta(ty)w\,
 \text{ and } \,
\gamma(t) = \exp_G(tx) \exp_G(ty).\] 
If $G$ has the Trotter property, then 
$\gamma_n(t) = \gamma(t/n)^n$ converges uniformly on each compact 
interval in $\R$ to $\exp_G(tz)$, the second component 
of $\exp_H(t(X + Y))$. To see what happens in the first component, 
we note that 
\[ \gamma_n(t) = \Big(\delta(t/n), \gamma(t/n)\Big)^n  
= \Big(B_n(t)n\delta(t/n), \gamma(t/n)^n\Big),\] 
where 
\[ B_n(t) = \frac{1}{n}\Big(\1 
+ \alpha(\gamma(t/n)) + \cdots + 
\alpha(\gamma(t/n))^{n-1}\Big).\] 

Next we observe that $\delta \: \R \to V$ is a smooth curve 
with $\delta(0) = 0$ and $\delta'(0) = u + w$. Therefore 
$n\delta(t/n)$ converges uniformly on compact subsets of $\R$ 
to $t(u + w)$. It therefore suffices to show that, on 
compacts subsets $C_V \subeq V$, we have 
\[ B_n(t) \to \beta(t(x+y)) = \beta(tz) 
= \int_0^1 \alpha(\exp_G(stz))\, ds.\] 

Fix $T > 0$. Writing 
\begin{align*}
B_n(t)v - \beta(tz)v 
&= B_n(t)v - \int_0^1 \alpha(\exp_G(stz))\, ds \\ 
&= \frac{1}{n}\Big(
\sum_{j = 0}^{n-1} \alpha(\gamma(t/n))^j v - \alpha\Big(\exp_G\Big(\frac{jt}{n}z\Big)\Big)v\Big) \\ 
&\qquad\qquad+ \frac{1}{n} \sum_{j = 0}^{n-1} \alpha\Big(\exp_G\Big(\frac{jt}{n}z\Big)\Big)v
- \int_0^1 \alpha(\exp_G(stz))v\, ds,  
\end{align*}
we see that the second summand converges to $0$ because it describes a 
Riemann sum approximation of the integral $\beta(tz)v$, which converges 
uniformly for $|t| \leq T$ and $v \in C_V$. 

By Lemma \ref{lem:2.2}, for every compact subset $C \subeq V$, the set 
\[ \{ \alpha(\gamma_n(t))v \: v \in C, |t| \leq T, n \in \N \} \] 
has compact closure $\tilde C$ which contains also the elements 
$\alpha(\exp_G(tz))v$, $v \in C$, $|t| \leq T$. 

Now let $p \: V \to \R$ be a continuous seminorm 
and $C \subeq V$ be a compact subset. 
Then  for every $\eps > 0$ 
there exists
a $\1$-neighborhood $U= U_{\eps,C} \subeq G$ such that 
$p(\alpha(g)c - c) \leq \eps$ for $g \in U_{\eps,C}$, 
$c \in \tilde C$. 

Since 
%the action $\alpha$ of $G$ on $V$ is continuous and 
$\gamma_n(t) \to \exp_G(tz)$ uniformly on $[-T,T]$, 
 for each $\1$-neighborhood $U \subeq G$
there exists  
an element $N_U \in \N$ such that 
\[ \gamma_n(t) \in U\exp_G(tz) \quad \mbox{ for } \quad n \geq N_U, 
|t| \leq T.\] 
For $0 \leq j < n$ we then have 
\[ \gamma(t/n)^j  = \gamma(tj/nj)^j = \gamma_j(tj/n) 
\in U \exp_G\Big(\frac{tj}{n}z\Big) \quad \mbox{ for } \quad j \geq N_U.\] 
For $j \geq N_U$ we thus obtain for $v \in C$ the estimate 
\[ p\Big(\alpha(\gamma(t/n))^j v - \alpha(\exp_G\big(\frac{jt}{n}z\big))v\Big) 
\leq \eps\] 
and therefore 
\[ p\Big(\frac{1}{n}\Big(
\sum_{j = 0}^{n-1} \alpha(\gamma(t/n))^j v - \alpha\Big(\exp_G\Big(\frac{jt}{n}z\Big)\Big)v
\Big) 
 \leq \frac{N_U}{n} 2 \sup p(\tilde C) + \frac{n-N_U}{n} \eps
< 2 \eps \] 
if $n$ is sufficiently large. This proves that 
$B_n(t)v \to \beta(tz)v$ uniformly for $|t| \leq T$ and $v \in C$ 
and hence that $H = V \rtimes_\alpha G$ has the Trotter property. 
\end{prf}

\subsubsection*{Regular Lie groups}
For the definition and properties of $C^0$-regularity, which  are used in the next theorem and its proof, see 
Appendix  \ref{app:b}.
\begin{thm} \mlabel{thm:3.15}
Suppose that $G$ is a $C^0$-regular Lie group with the Trotter 
property and that $q\: \hat G \to G$ is a central extension by  
a $C^0$-regular abelian Lie group $Z$ with Lie algebra $\fz$ 
(f.i., $Z = \fz/\Gamma$ where $\fz$ is complete and $\Gamma \subeq \fz$ is 
a discrete subgroup). 
Then $\hat G$ also has the Trotter property. 
\end{thm}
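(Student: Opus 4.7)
The strategy is to lift the Trotter identity from $G$ to $\hat G$ through a local smooth section of $q$, so that the residual error lies in the abelian group $Z$, where $C^0$-regularity makes it tractable.

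By Proposition~\ref{fromepstocpt} it suffices to establish local uniform convergence on some interval $[-\eps,\eps]$. Fix $\hat x_1, \hat x_2 \in \hat\g := \L(\hat G)$ and set $x_i := \L(q)(\hat x_i) \in \g$. Write
\[ \gamma_n(t) := \bigl(\exp_{\hat G}(t\hat x_1/n)\exp_{\hat G}(t\hat x_2/n)\bigr)^n, \qquad \eta(t) := \exp_{\hat G}(t(\hat x_1+\hat x_2)). \]
Choose a smooth local section $\sigma \: U \to \hat G$ of $q$ with $\sigma(\1) = \1$ on a $\1$-neighborhood $U \subseteq G$. Since $G$ has the Trotter property, $q(\gamma_n(t)) \to q(\eta(t)) = \exp_G(t(x_1+x_2))$ uniformly on compact subsets of $\R$, so shrinking $\eps$ we may assume $q(\gamma_n(t)), q(\eta(t)) \in U$ for all $|t|\leq \eps$ and $n$ large. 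Decompose
\[ \gamma_n(t) = \sigma(q(\gamma_n(t)))\,z_n(t), \qquad \eta(t) = \sigma(q(\eta(t)))\,\zeta(t), \qquad z_n(t),\zeta(t) \in Z. \]
By continuity of $\sigma$, the first factors converge uniformly, and the problem reduces to showing $z_n(t) \to \zeta(t)$ uniformly in $Z$ on $[-\eps,\eps]$.

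The next step is an explicit formula for $z_n$. Let $c(g_1, g_2) := \sigma(g_1)\sigma(g_2)\sigma(g_1 g_2)^{-1} \in Z$ be the local $Z$-valued cocycle of $\sigma$, put $a(s) := \exp_G(sx_1)\exp_G(sx_2)$, and write $\exp_{\hat G}(s\hat x_1)\exp_{\hat G}(s\hat x_2) = \sigma(a(s))\delta(s)$ with smooth $\delta \: (-\eps',\eps') \to Z$ satisfying $\delta(0)=\1$. Centrality of $Z$ together with induction on the exponent yields
\[ z_n(t) = \delta(t/n)^n \prod_{j=1}^{n-1} c\bigl(a(t/n)^j, a(t/n)\bigr). \]
Because $Z$ is $C^0$-regular abelian, $\exp_Z \: \fz \to Z$ is a local diffeomorphism at $0$ and a group homomorphism. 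Writing $\delta = \exp_Z\circ\psi$ and $c = \exp_Z\circ\kappa$ locally gives
\[ z_n(t) = \exp_Z\!\Bigl( n\psi(t/n) + \sum_{j=1}^{n-1} \kappa(a(t/n)^j, a(t/n)) \Bigr). \]
The first summand tends uniformly to $t\psi'(0)$. The cocycle sum is a Riemann-type approximation: since $\kappa$ vanishes whenever one of its arguments equals $\1$, and the Trotter property of $G$ keeps $a(t/n)^j$ uniformly close to $\exp_G((tj/n)(x_1+x_2))$ in a fixed compact subset of $G$ for $0 \leq j \leq n$ and $|t|\leq \eps$, this sum converges uniformly on $[-\eps,\eps]$ to an integral of a continuous $\fz$-valued function of $t$.

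To conclude, apply the same decomposition to $\eta$: writing $\zeta = \exp_Z\circ\varphi$ for a smooth $\varphi$, an analogous but limit-free calculation shows that $\varphi(t)$ coincides with the limit of $n\psi(t/n) + \sum_{j=1}^{n-1}\kappa(a(t/n)^j, a(t/n))$, whence $z_n(t) \to \zeta(t)$ uniformly. The main obstacle is the uniform control of the cocycle Riemann sum: it requires both the Trotter property of $G$ (to handle the partial products $a(t/n)^j$) and the $C^0$-regularity of $Z$ (to convert products of many small $Z$-valued terms into a convergent sum in $\fz$). Once this convergence is established, the identification of the limit with $\zeta$ is a routine unwinding of the definitions.
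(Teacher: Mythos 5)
Your reduction via Proposition~\ref{fromepstocpt}, the section--cocycle decomposition and the product formula for $z_n(t)$ are all fine, but the argument breaks at the step ``Because $Z$ is $C^0$-regular abelian, $\exp_Z\colon\fz\to Z$ is a local diffeomorphism at $0$.'' That property is local exponentiality of $Z$, and it is \emph{not} a consequence of $C^0$-regularity: one has $T_0\exp_Z=\id$, but there is no inverse function theorem for locally convex spaces, and it is not known that (regular) abelian Lie groups are automatically locally exponential. The theorem is stated for an arbitrary $C^0$-regular abelian $Z$; the quotient $\fz/\Gamma$ is only offered as an example. Without a chart furnished by $\exp_Z$, your maps $\psi$, $\kappa$, $\varphi$ are undefined, so the conversion of the $Z$-valued products into sums in $\fz$ collapses; at best your argument proves the theorem under the additional hypothesis that $Z$ is locally exponential. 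There is a second, independent gap: the closing claim that identifying the limit of $n\psi(t/n)+\sum_{j}\kappa(a(t/n)^j,a(t/n))$ with $\varphi(t)$ is ``a routine unwinding of the definitions'' is unsubstantiated; this identification is precisely the analytic heart of the proof (one must compare two $\fz$-valued functions, e.g.\ by differentiating $\eta(t)=\sigma(\exp_G(tz))\zeta(t)$ and matching the resulting integral formulas), and no argument is given for it.

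For comparison, the paper's proof avoids local coordinates on $Z$ and $\hat G$ altogether: it first invokes Theorem~\ref{thm:reg-ext-prop} ($C^0$-regularity is an extension property) to conclude that $\hat G$ is $C^0$-regular; it then computes the left logarithmic derivative $\delta(\hat\gamma_n)=\frac{1}{n}\sum_{j=0}^{n-1}\hat\Ad(\gamma(t/n))^{-j}\delta(\hat\gamma)_{t/n}$, where the adjoint action factors through $G$ by centrality, shows by the Riemann-sum argument of Proposition~\ref{prop:trot-semdir} together with the Trotter property of $G$ that this converges uniformly on compacta to the constant curve $\hat x+\hat y$, and finally concludes $\hat\gamma_n=\Evol_{\hat G}(\delta(\hat\gamma_n))\to\Evol_{\hat G}(\hat x+\hat y)$ from the continuity of $\Evol_{\hat G}$. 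This is where the regularity hypotheses on $G$ and $Z$ actually enter -- note that your proposal never genuinely uses either of them, which is a sign that the property you really rely on is the unavailable local exponentiality of $Z$.
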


\begin{prf}
Since $C^0$-regularity is an extension property by 
Theorem~\ref{thm:reg-ext-prop}, the group $\hat G$ is $C^0$-regular. 
We want to show that $\hat G$ also has the Trotter property. 
So let $\hat x, \hat y \in \hat\g$, 
$x := \L(q)\hat x$ and $y := \L(q)\hat y$. 
We also put $z := x + y$ and $\hat z := \hat x + \hat y$. 
We consider the smooth curves 
\[ \hat\gamma(t) := \exp_{\hat G}(t\hat x) \exp_{\hat G}(t\hat y) 
\quad \mbox{ and } \quad 
\gamma(t) := \exp_G(t x) \exp_G(t y) = q(\hat\gamma(t)).\] 

To see that $\hat\gamma_n(t) := \hat\gamma(t/n)^n$ converges 
uniformly on compact subsets of $\R$ to the curve $\exp_{\hat G}(t(\hat x+\hat y))$, 
it suffices to show that the corresponding sequence of 
(left) logarithmic derivatives $\delta(\hat\gamma_n)$ 
converges uniformly on compact subsets to the constant curve $\hat x + \hat y$. 

From the product rule 
$\delta(\alpha\beta) = \delta(\beta) + \Ad(\beta)^{-1} \delta(\alpha)$ 
it follows that 
\begin{align*}
 \delta(\hat\gamma_n)
& = \frac{1}{n}\delta(\hat\gamma)_{t/n} + \Ad(\hat\gamma(t/n))^{-1}
 \frac{1}{n}\delta(\hat\gamma)_{t/n} + \cdots + \Ad(\hat\gamma(t/n))^{-n+1}
 \frac{1}{n}\delta(\hat\gamma)_{t/n}  \\
 &= \frac{1}{n}\Big( \sum_{j = 0}^{n-1} \Ad(\hat\gamma(t/n))^{-j}\delta(\hat\gamma)_{t/n}\Big).
\end{align*}
The adjoint action of $\hat G$ on its Lie algebra 
$\hat\g$ factors through an action 
$\hat\Ad \: G \to \Aut(\hat\g)$, so that 
\[  \delta(\hat\gamma_n)
= \frac{1}{n}\Big( \sum_{j = 0}^{n-1} \hat\Ad(\gamma(t/n))^{-j}
\delta(\hat\gamma)_{t/n}\Big).
\] 
We also obtain from the product rule that 
\[ \delta(\hat\gamma)_t 
= \hat y + \Ad(\exp_{\hat G}(t\hat y))^{-1}\hat x 
= \hat y + \hat\Ad(\exp_{G}(-ty)) \hat x,\] 
so that 
\[ \delta(\hat\gamma)_{t/n} \to \hat x + \hat y = \hat z\] 
holds uniformly on compact intervals of $\R$.

As $\gamma_n(t) \to \exp_G(tz)$ uniformly on compact intervals in $\R$, 
we obtain as in the proof of Proposition~\ref{prop:trot-semdir} that 
\begin{align*}
&\lim_{n \to \infty} \frac{1}{n}\Big( \sum_{j = 0}^{n-1} \hat\Ad(\gamma(t/n))^{-j}
\delta(\hat\gamma)_{t/n}\Big) 
=  \lim_{n \to \infty} \frac{1}{n}\Big( \sum_{j = 0}^{n-1} \hat\Ad(\gamma(t/n))^{-j} \hat z \Big) \\
&=  \lim_{n \to \infty} \frac{1}{n}\Big( \sum_{j = 0}^{n-1} \hat\Ad\big(\exp_G (\frac{-jt}{n}z)\big)\hat z \Big) 
=  \lim_{n \to \infty} \frac{1}{n}\Big( \sum_{j = 0}^{n-1}
\hat z \Big) = \hat z.
\end{align*}
This means that $\delta(\hat\gamma_n) \to \hat z$ in the space 
$\cC(\R,\hat\g)$. Since $\hat G$ is $\cC^0$-regular, this implies that 
\[ \hat\gamma_n = \Evol_{\hat G}(\delta(\hat\gamma_n)) \to \Evol_{\hat G}(\hat z), \] 
which is the Trotter--Formula. 
\end{prf}

\begin{thm} {\rm(Gl\"ockner; \cite{Gl12b}; see \cite{OMYK82} for the compact case)} 
For every finite-dimensional smooth manifold $M$, 
the group $\Diff_c(M)$ of compactly supported diffeomorphisms 
is $C^0$-regular. 
\end{thm}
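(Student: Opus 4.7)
The plan is to construct the evolution map explicitly from the flow of a nonautonomous ODE on $M$, and then reduce smoothness in the Lie group sense to smooth dependence on parameters for ODEs.

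First I would exploit the LF-space structure $\L(\Diff_c(M)) = \cV_c(M) = \varinjlim_K \cV_K(M)$, where $K$ runs through compact subsets of $M$ and $\cV_K(M)$ is the Fr\'echet space of smooth vector fields supported in~$K$. For any continuous curve $\xi \colon [0,1] \to \cV_c(M)$ the image is compact, and compact subsets of a strict LF-space lie in a single step; hence there is a fixed compact $K_0 \subseteq M$ with $\supp \xi_t \subseteq K_0$ for all $t \in [0,1]$. Classical ODE theory applied to $\dot x(t) = \xi_t(x(t))$, using joint continuity of the evaluation $\cV_c(M) \times M \to TM$, then produces a continuous flow $\Phi \colon [0,1] \times M \to M$. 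Each $\Phi_t$ is a smooth diffeomorphism supported in $K_0$, hence an element of $\Diff_c(M)$, and a direct computation gives $\delta(\Phi)_t = \xi_t$, so the curve $t \mapsto \Phi_t$ solves the evolution problem.

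Next I would check that $t \mapsto \Phi_t$ is continuous into $\Diff_c(M)$ with its Lie group topology. This reduces to verifying continuity of $t \mapsto T^r(\Phi_t)$ in the modified compact open $\cC^r$ topology for every $r$, which follows from continuous dependence of $\Phi$ and its spatial derivatives on~$t$, combined with the uniform support condition. The existence half of $C^0$-regularity is thereby settled.

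The main obstacle is the smoothness of the evolution map $\Evol_{\Diff_c(M)} \colon \cC([0,1], \cV_c(M)) \to \Diff_c(M)$, $\xi \mapsto \Phi_1$. This amounts to smooth dependence of the flow (in the Bastiani sense) on the vector field~$\xi$, regarded as a parameter in the LF-space $\cV_c(M)$. The classical smooth-dependence theorem for parameter-dependent ODEs must be combined with the fact that a map out of $\cC([0,1], \cV_c(M))$ is smooth iff its restriction to the subspaces $\cC([0,1], \cV_K(M))$ is smooth for each compact $K$, which reduces matters to a Fr\'echet setting. For compact $M$ the argument was carried out in \cite{OMYK82}; Gl\"ockner's contribution in \cite{Gl12b} is the careful treatment of this smooth dependence over a non-compact base, with the uniform support reduction of the first step being the bridge that lets the local-in-$M$ ODE machinery be globalized inside $\Diff_c(M)$.
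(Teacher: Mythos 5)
The paper itself offers no proof of this statement: it is quoted from Gl\"ockner's work \cite{Gl12b} (with \cite{OMYK82} covering compact $M$), so there is no internal argument to measure your sketch against. Judged on its own terms, the first half of your plan is sound: a continuous curve $\xi\colon[0,1]\to\cV_c(M)$ has compact image, hence is supported in a fixed compact set $K_0$, the nonautonomous flow exists and is complete, and (up to being careful about left versus right logarithmic derivatives, and about the fact that the Lie group topology of $\Diff_c(M)$ for noncompact $M$ is not simply the modified compact open $\cC^\infty$-topology, although this is harmless since your curve stays in the subgroup of diffeomorphisms supported in $K_0$) it produces the required $\cC^1$-curve solving $\delta(\gamma)=\xi$.

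The genuine gap is in the smoothness half, which is where the theorem actually lives. Your reduction rests on the claim that a map out of $\cC([0,1],\cV_c(M))$ is smooth as soon as its restrictions to the steps $\cC([0,1],\cV_K(M))$ are smooth. This is not a valid principle: for nonlinear maps even continuity on a strict LF-space cannot be tested on the steps (the LF-topology is only the final \emph{locally convex} topology), and moreover $\cC([0,1],\cV_c(M))$ with the compact open topology, which is the space on which $C^0$-regularity demands smoothness of $\evol$, need not coincide topologically with $\varinjlim_K \cC([0,1],\cV_K(M))$. In addition, even on a fixed Fr\'echet step, ``classical smooth dependence on parameters'' gives differentiability with respect to finite-dimensional parameters and does not by itself yield Bastiani smoothness of $\xi\mapsto\Evol(\xi)$ as a map into the Fr\'echet--Lie group of diffeomorphisms supported in $K_0$; one must differentiate the flow with respect to $\xi$ in the locally convex sense and control all spatial derivatives uniformly. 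These two points are precisely the content of \cite{Gl12b} (and of \cite{OMYK82} in the compact case), to which your own text defers them; so what you have is a reasonable reduction of the problem, not a proof of the theorem.
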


Combining Gl\"ockner's theorem with Corollary~\ref{cor:3.11} 
and Theorem~\ref{thm:3.15}, we obtain: 

\begin{cor} Every central extension of a diffeomorphism group 
$\Diff(M)$ of a compact smooth manifold by a finite-dimensional center has 
the Trotter property. In particular, the Virasoro group $\Vir$ has 
the Trotter property. 
\end{cor}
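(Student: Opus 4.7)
The plan is to read this off as a direct combination of the three ingredients already in place: Glöckner's $C^0$-regularity theorem for compactly supported diffeomorphism groups, Corollary~\ref{cor:3.11} on the Trotter property of $\Diff(M)$ for compact $M$, and Theorem~\ref{thm:3.15} on stability of the Trotter property under central extensions by $C^0$-regular abelian Lie groups.

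Fix a compact smooth manifold $M$ and a central extension $q\colon\hat G\to G := \Diff(M)$ with kernel a finite-dimensional abelian Lie group $Z$. First, since $M$ is compact we have $\Diff_c(M) = \Diff(M) = G$, so Glöckner's theorem applies and shows that $G$ is $C^0$-regular. By Corollary~\ref{cor:3.11}, $G$ also has the Trotter property. Next, $Z$ is a finite-dimensional abelian Lie group, hence of the form $\fz/\Gamma$ with $\fz$ finite-dimensional (in particular complete) and $\Gamma\subeq\fz$ a discrete subgroup; in this situation $Z$ is $C^0$-regular (this is the finite-dimensional case covered by the standing assumption in Theorem~\ref{thm:3.15}). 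With both $G$ and $Z$ verified to be $C^0$-regular and $G$ satisfying the Trotter property, Theorem~\ref{thm:3.15} applies directly to the extension $q\colon\hat G\to G$ and yields the Trotter property for $\hat G$. This proves the general statement.

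For the particular case of the Virasoro group, recall that $\Vir$ is a central extension of $\Diff^+(\bS^1)$ (the identity component of $\Diff(\bS^1)$, which is a compact one-dimensional manifold) by the one-dimensional abelian group $\R$ (or $\T$, depending on convention). Since $\bS^1$ is a compact smooth manifold, the general statement just established applies. The only step where caution might be required is passing from $\Diff(\bS^1)$ to $\Diff^+(\bS^1)$, but the Trotter property is a local condition in the identity component (cf.\ Proposition~\ref{fromepstocpt}), so it descends to open subgroups; equivalently one may simply observe that $\Diff^+(\bS^1)$ itself is $C^0$-regular and has the Trotter property by the same pair of results, since it is open in $\Diff(\bS^1)$ with the same Lie algebra. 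Hence the Virasoro group $\Vir$ has the Trotter property.

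The only potentially non-routine point in the plan is verifying that the hypotheses of Theorem~\ref{thm:3.15} are met by $Z$; but the parenthetical clause in that theorem's statement identifies precisely this class of abelian Lie groups ($\fz/\Gamma$ with $\fz$ complete) as $C^0$-regular, which covers the finite-dimensional case trivially. No further obstacle is expected.
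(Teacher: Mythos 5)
Your proposal is correct and follows exactly the paper's own argument: the corollary is obtained by combining Gl\"ockner's theorem (so $\Diff(M)=\Diff_c(M)$ is $C^0$-regular for compact $M$), Corollary~\ref{cor:3.11} (Trotter property of $\Diff(M)$), and Theorem~\ref{thm:3.15} applied to the central extension, with the finite-dimensional center covered by the $\fz/\Gamma$ clause. Your extra remark on passing to the open subgroup $\Diff^+(\bS^1)$ for the Virasoro case is a harmless supplement the paper leaves implicit.
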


\section{$\cC^k$-vectors}  \mlabel{sec:3} 

In this section, $G$ denotes a Lie group with a smooth exponential 
function 
\[ \exp_G \: \L(G) = \g\to G.\] 

\begin{defn} \label{def:domains}
Let $(\pi, V)$ be a representation of the Lie group $G$ 
(with a smooth exponential function) on the locally convex 
space~$V$.

(a) We say that $\pi$ is {\it continuous} if the action of $G$ on $V$ 
defined by $(g,v) \mapsto \pi(g)v$ is continuous. 

(b) An element $v \in V$ is a $\cC^k$-vector, $k \in \N_0 \cup \{\infty\}$,  
if the orbit map $\pi^v \: G \to V, g \mapsto \pi(g)v$ is a 
$\cC^k$-map. We write $V^k := V^k(\pi)$ for the linear subspace of 
$\cC^k$-vectors and we say that the representation 
$\pi$ is {\it smooth} if the space $V^\infty$ of smooth vectors 
is dense.  A vector $v\in V$ is called an \emph{analytic} vector if the orbit map $\pi^v$ is analytic. The space of analytic vectors is denoted by $V^\omega$.

(c) For each $x \in \g$, we write 
$$\cD_x := \Big\{ v \in V \: \derat0 \pi(\exp_G (tx))v \ \mbox{ exists } \Big\}
$$ 
for the domain of the infinitesimal generator 
$$\oline{\dd\pi}(x)v := \derat0 \pi(\exp_G(tx))v
$$ 
of the one-parameter group $\pi(\exp_G(tx))$, $\cD^1 := \bigcap_{x \in \g} \cD_x$ 
and $\omega_v(x) := \oline{\dd\pi}(x)v$ for $v \in \cD^1$. 
Each $\cD_x$ and therefore also $\cD^1$ are linear subspaces of~$V$, 
but at this point we do not know whether $\omega_v$ is linear 
(cf.~\cite[Thm.~8.2]{Ne10} for a positive answer for Banach--Lie groups). 

(d) We define inductively  
$$ \cD^n := 
\{ v \in \cD^1 \: (\forall x \in \g)\, \oline{\dd\pi}(x)v \in 
\cD^{n-1}\} \quad \mbox{ for } \quad n > 1, $$
so that 
$$ \omega_v^n(x_1, \ldots, x_n) := 
\oline{\dd\pi}(x_1)\cdots \oline{\dd\pi}(x_n)v$$ 
is defined for $v \in \cD^n$ and $x_1,\ldots, x_n \in \g$. 
We further put $\cD^\infty := \bigcap_{n \in \N} \cD^n$. 
\end{defn}

\begin{rem} \label{rem:cn-vec} 
(a) For every representation $(\pi,V)$ we have 
$$ V^1(\pi) \subeq \cD^1 \quad \mbox{ and } \quad 
V^k(\pi) \subeq \cD^k \quad \mbox{ for } \quad k \in \N. $$
Note that $\omega_v^k$ is continuous and $k$-linear for 
every $v \in V^k(\pi)$. 

(b) By definition, we have $\cD^2 \subeq \cD^1$, so that we obtain 
by induction that $\cD^{n+1} \subeq \cD^n$ for every 
$n \in \N$. 
\end{rem}

(c) If $v\in V^k(\pi)$ then from the continuity of the action $G\times V\to V$ it follows that the map 
$G\times\g ^k\to V\,,\,
(g,x_1,\ldots,x_k)\mapsto \pi(g)\oline{\dd\pi}(x_1)\cdots\oline{\dd\pi}(x_k)v$ is continuous.

The following lemma (\cite[Lemma~3.3]{Ne10}) provides a criterion 
for $\cC^1$-vectors. 

\begin{lem} \mlabel{lem:a.11c} Suppose that $(\pi, V)$ 
is a continuous representation of the Lie group $G$ 
on $V$. Then a vector $v \in \cD^1$ 
is a $\cC^1$-vector if and only if the following two 
conditions are satisfied: 
\begin{description}
\item[\rm(i)] For every smooth curve $\gamma \: [-\eps,\eps]  \to G$, $\eps > 0$,  
with $\gamma(0) = \1$ and $\gamma'(0) = x$, the derivative  
$\frac{d}{dt}|_{t = 0} \pi(\gamma(t))v$ exists and 
equals $\oline{\dd\pi}(x)v$. 
\item[\rm(ii)] $\omega_v \: \g \to V, x \mapsto \oline{\dd\pi}(x)v$ 
is continuous. 
\end{description}

If $G$ is locally exponential, then 
{\rm(i)} follows from {\rm(ii)}. 
\end{lem}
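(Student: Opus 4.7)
The plan is to prove the equivalence in both directions and then derive the locally exponential addendum. The forward direction follows immediately from the chain rule: if $\pi^v := \pi(\cdot)v$ is $\cC^1$, then for any smooth curve $\gamma$ through $\1$ with $\gamma'(0) = x$ one has $\derat0\pi(\gamma(t))v = \dd\pi^v(\1)(x)$, which equals $\omega_v(x)$ upon specializing to $\gamma = \gamma_x$; continuity of $\omega_v = \dd\pi^v(\1)\res_\g$ is part of the $\cC^1$-data.

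For the reverse direction, my plan is to work in an arbitrary chart $\phi \: U \to W \subseteq \g$ of $G$ and show that $h := \pi^v \circ \phi^{-1}$ is $\cC^1$. The key is to exploit the cocycle property $\pi^v(gk) = \pi(g)\pi^v(k)$ by factoring $\phi^{-1}(w+ty) = \phi^{-1}(w) \cdot g_{w,y}(t)$, where $g_{w,y}(t) := \phi^{-1}(w)^{-1}\phi^{-1}(w+ty)$ is a smooth curve through $\1$ whose tangent at $0$ is $\eta(w, y) := T_{\phi^{-1}(w)}\lambda_{\phi^{-1}(w)^{-1}}(\dd\phi^{-1}(w)y) \in \g$. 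Condition (i) then produces the Gateaux derivative $\dd h(w)(y) = \pi(\phi^{-1}(w))\omega_v(\eta(w, y))$, and the continuity of this expression in $(w, y)$ is assembled from (ii), continuity of the action $G \times V \to V$, and smoothness of the group operations. Invoking \cite[Lemma~2.2.14]{GN12}, continuity of the total Gateaux derivative automatically upgrades to linearity, making $h$---and hence $\pi^v$---a $\cC^1$-map.

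For the locally exponential addendum, I would deduce (i) from (ii) by using $\exp_G$ as a local chart. Any smooth curve through $\1$ with tangent $x$ has the form $\gamma(t) = \exp_G(\alpha(t))$ for some smooth $\alpha$ with $\alpha(0) = 0$ and $\alpha'(0) = x$. Since $v \in \cD_{\alpha(t)}$, the map $s \mapsto \pi(\exp_G(s\alpha(t)))v$ is $\cC^1$ with derivative $\pi(\exp_G(s\alpha(t)))\omega_v(\alpha(t))$, so the Fundamental Theorem of Calculus in the locally convex setting gives
\[ \pi(\exp_G(\alpha(t)))v - v = \int_0^1 \pi(\exp_G(s\alpha(t)))\omega_v(\alpha(t))\, ds. \]
Using the homogeneity identity $\omega_v(\alpha(t)) = t\,\omega_v(\alpha(t)/t)$---an immediate consequence of the definition of $\omega_v$---dividing by $t$ converts this into an integral whose integrand is $\pi(\exp_G(s\alpha(t)))\omega_v(\alpha(t)/t)$. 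As $t \to 0$ one has $\alpha(t)/t \to x$, so (ii) yields $\omega_v(\alpha(t)/t) \to \omega_v(x)$, while $\pi(\exp_G(s\alpha(t))) \to \id$ uniformly in $s \in [0,1]$ by continuity of $\pi$; passing to the limit recovers $\omega_v(x)$, as (i) requires.

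The main technical obstacle will be justifying the limit under the integral sign in the locally convex setting. My plan is to extract the needed uniformity in $s \in [0,1]$ by splitting the integrand as $\pi(\exp_G(s\alpha(t)))[\omega_v(\alpha(t)/t) - \omega_v(x)]$ plus $[\pi(\exp_G(s\alpha(t)))\omega_v(x) - \omega_v(x)]$, and then invoking uniform continuity on the compact set $\{s\alpha(t) : s \in [0,1],\, |t| \leq \delta\} \subseteq \g$---in the spirit of Lemma~\ref{lem:2.3}---together with the continuity of the action of $G$ on $V$.
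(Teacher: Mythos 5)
Your proposal is correct and takes essentially the same route as the original argument: the paper itself only cites \cite[Lemma~3.3]{Ne10} for this lemma, and that proof likewise gets the forward direction from the chain rule, the converse by transporting everything to the identity via $\pi^v(gk)=\pi(g)\pi^v(k)$ and using continuity of the resulting total differential (linearity being automatic, as in \cite[Lemma~2.2.14]{GN12}), and the locally exponential addendum by writing $\gamma=\exp_G\circ\,\alpha$ and passing to the limit in the fundamental-theorem-of-calculus identity. Indeed, the present paper later quotes from that proof exactly the fact your addendum establishes, namely $\frac{d}{ds}\big|_{s=0}\pi(\exp_G\eta(s))v=\omega_v(\eta'(0))$ for smooth curves $\eta$ with $\eta(0)=0$.
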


\begin{defn} A Lie group $G$ is called {\it locally $m$-exponential} if there exist 
closed subspaces $\g_1,\ldots, \g_m \subeq \g = \L(G)$ such that 
$\g = \g_1 \oplus \cdots \oplus \g_m $
is a topological direct sum and the map 
\[ M \: \g \to G, \quad M(x_1 + \cdots + x_m) := 
\exp_G (x_1) \cdots \exp_G (x_m) \] 
is a local diffeomorphism in a neighborhood of $(0,\ldots, 0)$. 
\end{defn}

The following lemma extends the implication (ii) $\Rarrow$ (i) of Lemma 
\ref{lem:a.11c}
 to the larger class of locally $m$-exponential groups. 

\begin{lem} \label{lem:a.11d} Suppose that $(\pi, V)$ 
is a continuous representation of the locally $m$-exponential Lie group $G$ 
on $V$. Let $v \in \cD^1$ 
be such that $\omega_v \: \g \to V, x \mapsto \oline{\dd\pi}(x)v$ is continuous and linear. 
Then, for every smooth curve $\gamma \: [-\eps,\eps]  \to G$ 
with $\gamma(0) = \1$ and $\gamma'(0) = x$, the derivative  
$\frac{d}{dt}|_{t = 0} \pi(\gamma(t))v$ exists and 
equals $\oline{\dd\pi}(x)v$. 
In particular, $v$ is a $\cC^1$-vector. 
\end{lem}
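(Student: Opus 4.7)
The plan is to reduce the problem to verifying condition~(i) of Lemma~\ref{lem:a.11c}, since~(ii) is assumed, and then to use the local $m$-exponential factorization to split $\gamma$ into a product of $m$ one-parameter exponentials. First, from the local diffeomorphism $M \: \g_1 \oplus \cdots \oplus \g_m \to G$ I obtain, for $|t|$ sufficiently small, smooth curves $x_i \: (-\eps',\eps') \to \g_i$ with $x_i(0) = 0$ such that $\gamma(t) = \exp_G(x_1(t))\cdots\exp_G(x_m(t))$; setting $y_i := x_i'(0) \in \g_i$ and comparing tangent vectors through $M$ yields $y_1 + \cdots + y_m = x$.

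The core analytic step is a single-factor formula: for any smooth curve $z \: (-\eps',\eps') \to \g$ with $z(0) = 0$ and $y := z'(0)$,
\[
\frac{d}{dt}\bigg|_{t=0}\pi(\exp_G(z(t)))v \;=\; \oline{\dd\pi}(y)v.
\]
Writing $z(t) = t\xi(t)$ with $\xi$ continuous and $\xi(0) = y$, I observe that for each fixed $t$ the one-parameter orbit $s \mapsto \pi(\exp_G(s\xi(t)))v$ is $\cC^1$ in $s$ with derivative $\pi(\exp_G(s\xi(t)))\oline{\dd\pi}(\xi(t))v$, since $v \in \cD_{\xi(t)}$ and the derivative at arbitrary $s$ is obtained by left-translation from the one at $s = 0$. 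The fundamental theorem of calculus, combined with the substitution $u = ts$, then yields
\[
\frac{\pi(\exp_G(z(t)))v - v}{t} \;=\; \int_0^1 \pi(\exp_G(ts\xi(t)))\,\oline{\dd\pi}(\xi(t))v\, ds,
\]
and as $t \to 0$ the integrand converges uniformly in $s \in [0,1]$ to $\oline{\dd\pi}(y)v$ by continuity of $\pi$ and of $\omega_v$, so the integral converges to $\oline{\dd\pi}(y)v$. This is essentially the argument already used for the locally exponential case in Lemma~\ref{lem:a.11c}.

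Next I proceed by downward induction on $k = m, m-1, \ldots, 1$ with
\[
\alpha_k(t) \;:=\; \pi(\exp_G(x_k(t))\cdots\exp_G(x_m(t)))v,
\]
showing that $\alpha_k'(0) = \sum_{i=k}^m \oline{\dd\pi}(y_i)v$. The base case $k = m$ is the single-factor formula. For the inductive step, the decomposition
\[
\alpha_k(t) - v \;=\; \pi(\exp_G(x_k(t)))\bigl[\alpha_{k+1}(t) - v\bigr] \;+\; \bigl[\pi(\exp_G(x_k(t)))v - v\bigr],
\]
divided by $t$, yields in the limit $t \to 0$ the sum of $\sum_{i=k+1}^m \oline{\dd\pi}(y_i)v$ (from the inductive hypothesis together with joint continuity of the action $G \times V \to V$ at $(\1,\cdot)$, which handles the moving base-point) and $\oline{\dd\pi}(y_k)v$ (from the single-factor formula).

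Finally, taking $k = 1$ and invoking the linearity of $\omega_v$ collapses the sum to $\oline{\dd\pi}(y_1 + \cdots + y_m)v = \oline{\dd\pi}(x)v$, which verifies condition~(i) of Lemma~\ref{lem:a.11c} and thereby shows that $v$ is a $\cC^1$-vector. The main obstacle is the single-factor step: because $\pi$ is only continuous (not smooth), one cannot apply the chain rule to $(s,t) \mapsto \pi(\exp_G(s\xi(t)))v$ directly, and the integral representation implicitly requires enough completeness of $V$ for the Riemann integral to converge, which is standard in the Fr\'echet setting the paper works in.
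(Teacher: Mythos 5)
Your proof is correct and follows essentially the same route as the paper's: factor $\gamma$ through the local $m$-exponential chart, use the single-factor formula $\frac{d}{dt}\big|_{t=0}\pi(\exp_G(z(t)))v=\omega_v(z'(0))$ (which the paper simply quotes from the proof of \cite[Lem.~3.3]{Ne10} and you re-derive by the standard integral-representation argument), then the telescoping identity together with continuity of the action, and finally the linearity of $\omega_v$. Your closing concern about completeness of $V$ is not actually needed (the fundamental theorem of calculus for $\cC^1$-curves and the seminorm estimate can be run with weak integrals in any locally convex space), so it does not affect the validity of the argument.
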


\begin{prf} 
%Assume that $G$ is locally $m$-exponential. 
Any smooth curve $\gamma$ with $\gamma(0) = \1$ and 
$\gamma'(0) = x$ can be written 
for sufficiently small values of $t \in \R$ as 
$\exp_G(\eta_1(t))\cdots \exp_G(\eta_m(t))$ with smooth curves $\eta_j \: [-\eps,\eps] \to \g$ 
satisfying 
$\eta_j(0) = 0$ and $\sum_j \eta_j'(0) = x$. 
From the proof of \cite[Lem.~3.3]{Ne10} we know that 
for every smooth curve $\eta:[-\eps,\eps]\to \g$ satisfying 
$\eta(0)=0$ we have
$$ 
\deras0 \pi(\exp_G \eta(s))v = \omega_v(\eta'(0)). 
$$
For two smooth curves $\gamma_j \: [-\eps,\eps] \to G$ with 
$\gamma_j(0) = \1$ we have 
\[ 
\pi(\gamma_1(t)\gamma_2(t))v - v 
= 
\pi(\gamma_1(t))\big(\pi(\gamma_2(t))v - v\big) + 
\big(\pi(\gamma_1(t))v - v\big),\] 
and since $G$ acts continuously on $V$, we obtain 
\[ \derat0 \pi(\gamma_1(t)\gamma_2(t))v  
=  \derat0 \pi(\gamma_1(t))v + \derat0 \pi(\gamma_2(t))v.\] 
By induction this leads to 
\begin{align*}
 \derat0 \pi(\gamma(t))v 
&= \sum_{j = 1}^m \derat0 \pi(\exp_G \eta_j(t))v 
= \sum_{j =1}^m \omega_v(\eta_j'(0)) 
= \omega_v\Big(\sum_{j=1}^m \eta_j'(0)\Big) = \omega_v(x). 
\qedhere\end{align*}
\end{prf}

As an immediate consequence of Theorem~\ref{thm:3.4}, we obtain: 

\begin{thm} \mlabel{thm:additive} 
Let $(\pi, V)$ be a continuous representation 
of the Lie group $G$ with the Trotter property on the locally convex space $V$. 
Then, for each $v \in \cD^1$, the map 
$$ \omega_v \: \g \to V, \quad v \mapsto \oline{\dd\pi}(x)v $$
is linear. 
\end{thm}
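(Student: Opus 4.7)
The plan is to apply Theorem~\ref{thm:3.4} to the orbit map $\phi := \pi^v \: G \to V$, $\phi(g) := \pi(g)v$, and then extract the linearity of $\omega_v$ by evaluating at the identity. The whole point of Theorem~\ref{thm:3.4} was to reduce the linearity problem for derived representations to a continuity statement about $\phi$ and its one-parameter directional derivatives, so the proof should be very short once the hypotheses are verified.

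First I would check that $\phi$ satisfies the hypotheses of Theorem~\ref{thm:3.4}. Continuity of $\phi$ is immediate from the continuity of the action $(g,w) \mapsto \pi(g)w$. For each $x \in \g$, since $\pi(g)$ is a continuous linear operator on $V$, one has
\[
D_x\phi(g) = \derat0 \pi(g\exp_G(tx))v = \pi(g)\,\derat0 \pi(\exp_G(tx))v = \pi(g)\,\oline{\dd\pi}(x)v,
\]
and this limit exists for every $g\in G$ because $v \in \cD^1 \subseteq \cD_x$. The continuity of the map $g \mapsto D_x\phi(g) = \pi(g)\,\oline{\dd\pi}(x)v$ follows again from continuity of the $G$-action on~$V$, applied to the fixed vector $\oline{\dd\pi}(x)v \in V$.

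Therefore the hypotheses of Theorem~\ref{thm:3.4} are satisfied and that theorem yields that the map
\[
\g \to \cC(G,V), \quad x \mapsto D_x\phi
\]
is linear. Evaluating the equality $D_{\lambda x + \mu y}\phi = \lambda D_x\phi + \mu D_y\phi$ at $g = \1$ gives
\[
\omega_v(\lambda x + \mu y) = \lambda\,\omega_v(x) + \mu\,\omega_v(y) \qquad \text{for all } x,y \in \g,\ \lambda,\mu \in \R,
\]
which is precisely the desired linearity of $\omega_v$.

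There is no real obstacle here: the substantial work was done in Section~\ref{sec:s2}, in particular in Lemma~\ref{lem:2.6} and Theorem~\ref{thm:3.4}. The only subtle point worth noting is that the argument genuinely uses that $\pi(g)$ is a \emph{continuous linear} operator (so that it commutes with the difference-quotient limit) and that the $G$-action on $V$ is \emph{jointly} continuous (so that the resulting function $g \mapsto \pi(g)\oline{\dd\pi}(x)v$ lies in $\cC(G,V)$); no metric or completeness assumption on $V$, and no exponentiality hypothesis on $G$ beyond the Trotter property, enters the proof.
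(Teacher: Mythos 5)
Your argument is correct and is essentially identical to the paper's own proof: both apply Theorem~\ref{thm:3.4} to the orbit map $\pi^v$, using $D_x\pi^v = \pi^{\oline{\dd\pi}(x)v}$ (which exists and is continuous for $v \in \cD^1$ by continuity of the representation), and read off linearity of $\omega_v$ by evaluation at the identity. Your verification of the hypotheses just spells out details the paper leaves implicit.
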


\begin{prf} For $x \in \g$ and the continuous orbit 
map $\pi^v:G\to V\,,\,\pi^v(g) := \pi(g)v$ we have 
$D_x \pi^v = \pi^{\oline\dd\pi(x)v}$, which exists and is continuous 
for $x \in \g$ and $v \in \cD^1$. Hence the assertion follows from 
Theorem~\ref{thm:3.4}.
\end{prf}

\begin{lem} \mlabel{lem:a.12} {\rm(\cite[Lemma~3.4]{Ne10})} 
If $G$ is locally exponential, then a vector $v \in V$ is a $\cC^k$-vector 
if and only if $v \in \cD^k$ and the 
maps $\omega_v^n$, $n \leq k$, are continuous and $n$-linear. 
In particular, $v$ is a smooth vector if and only if 
$v \in \cD^\infty$ and all the maps $\omega_v^n$ 
are continuous and $n$-linear. 
\end{lem}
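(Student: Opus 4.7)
The forward implication is essentially a consequence of the chain rule: if $\pi^v \: G \to V$ is a $\cC^k$-map, then iterating differentiation along the one-parameter subgroups $t_i \mapsto \exp_G(t_i x_i)$ shows that $\omega_v^n(x_1,\ldots,x_n)$ realizes the mixed directional derivative $\partial_{(x_n)_l}\cdots\partial_{(x_1)_l}\pi^v$ at $\1$ for every $n \leq k$. These iterated derivatives of a $\cC^k$-map are automatically jointly continuous and $n$-linear in the $x_i$, so $v \in \cD^k$ with the required regularity of the $\omega_v^n$.

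For the converse I would induct on $k$. The base case $k = 1$ is immediate from Lemma~\ref{lem:a.11c}: condition (ii) there is precisely our hypothesis, and its concluding sentence asserts that (i) follows from (ii) when $G$ is locally exponential, so $v$ is a $\cC^1$-vector. (The linearity of $\omega_v$ included in our hypothesis is in fact automatic here, by combining Proposition~\ref{locexpprp} with Theorem~\ref{thm:additive}.) For the inductive step $k \geq 2$, fix $x \in \g$ and set $w_x := \oline{\dd\pi}(x)v$. Freezing the first argument of each continuous $(n+1)$-linear map $\omega_v^{n+1}$ for $n \leq k-1$ shows $w_x \in \cD^{k-1}$ with $\omega_{w_x}^n(x_1,\ldots,x_n) = \omega_v^{n+1}(x,x_1,\ldots,x_n)$ continuous and $n$-linear. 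The inductive hypothesis therefore yields that $w_x$ is a $\cC^{k-1}$-vector.

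To promote this to $\cC^k$-regularity of $\pi^v$, I would compute the action of a left-invariant vector field $x_l$ on the orbit map: the base case applied to $v$ itself gives that $\pi^v$ is at least $\cC^1$, and a direct calculation with $\gamma_x(t) = \exp_G(tx)$ gives $(x_l\pi^v)(g) = \pi(g)w_x = \pi^{w_x}(g)$. Since $w_x$ is $\cC^{k-1}$, the right-hand side is a $\cC^{k-1}$-function of $g$; combined with the linearity of $x \mapsto w_x$ (by hypothesis on $\omega_v$) and its suitable continuity into the space of $\cC^{k-1}$-vectors (obtained from joint continuity and multilinearity of all $\omega_v^n$ with $n \leq k$), a trivialization of $TG$ by a basis of left-invariant fields followed by the chain rule promote $\pi^v$ to a $\cC^k$-map. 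The smooth vector statement is then obtained by taking the intersection over all $k \in \N$. The main obstacle in the inductive step is this last joint-regularity argument: a priori, the hypothesis provides only derivatives along one-parameter subgroups, so to assemble these directional derivatives into a genuine $\cC^k$-structure one must essentially use local exponentiality (to parametrize $G$ near $\1$ by $\exp_G$ and reduce to the study of $x \mapsto \pi(\exp_G x)v$ near $0 \in \g$) in combination with the full multilinear continuity data of the family $\{\omega_v^n\}_{n \leq k}$.
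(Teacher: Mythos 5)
Your outline follows essentially the same route as the paper: the statement is quoted from \cite[Lemma~3.4]{Ne10}, and the induction you describe is exactly the mechanism the paper reproduces in the proof of Theorem~\ref{thm:2.1} --- base case via Lemma~\ref{lem:a.11c}, inductive step by showing that $T(\pi^v)(g,x)=\pi(g)\oline{\dd\pi}(x)v$ is a $\cC^{k-1}$-map on $G\times\g$. Two points in your write-up should be tightened. First, the frozen slot is the last one, not the first: $\omega_{w_x}^n(x_1,\ldots,x_n)=\oline{\dd\pi}(x_1)\cdots\oline{\dd\pi}(x_n)\oline{\dd\pi}(x)v=\omega_v^{n+1}(x_1,\ldots,x_n,x)$, and the operators need not commute; this is harmless, since freezing any slot of a continuous multilinear map leaves a continuous multilinear map, but the identity as you wrote it is not correct. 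Second, the concluding joint-regularity step, which you leave vague, is handled more directly than you suggest: once $w_x=\oline{\dd\pi}(x)v$ is known to be a $\cC^{k-1}$-vector, one computes the iterated directional derivatives of $T(\pi^v)$ and sees that they are sums of terms of the form $\pi(g)\omega_v^{j}(y_1,\ldots,y_j)$ with $j\le k$ (one slot possibly occupied by the linear variable $x$), and these are jointly continuous in $(g,y_1,\ldots,y_j)$ by continuity of the $G$-action together with the continuity and multilinearity of the $\omega_v^{j}$ (cf.\ Remark~\ref{rem:cn-vec}(c)). In particular no topology on the space of $\cC^{k-1}$-vectors, no ``basis'' of left-invariant vector fields (which does not exist in this generality; one simply uses the left trivialization $TG\cong G\times\g$), and no second appeal to local exponentiality is required: local exponentiality enters exactly once, through the implication (ii)$\Rightarrow$(i) of Lemma~\ref{lem:a.11c} at the $\cC^1$ level.
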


\begin{lem} \mlabel{lem:cont-m} Let $V_0, V_1,\ldots, V_n$ be topological 
vector spaces and 
\[ \omega \: V_1 \times \cdots \times V_n \to V_0 \] 
be an $n$-linear map. 
If $\omega$ is continuous in some point 
$(v_1^0,\ldots, v_n^0)$, then $\omega$ is continuous. 
\end{lem}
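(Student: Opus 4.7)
The plan is to reduce joint continuity of $\omega$ at an arbitrary point to continuity at the origin $(0,\ldots,0)$, and then to upgrade continuity at the origin to continuity everywhere by exploiting the absorbing property of neighborhoods of zero in a topological vector space.

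For the first step I exploit multilinearity to write the $2^n$-term expansion
\[ \omega(v_1^0 + h_1, \ldots, v_n^0 + h_n) = \sum_{S \subseteq \{1, \ldots, n\}} \omega(u_1^S, \ldots, u_n^S), \]
where $u_i^S = h_i$ for $i \in S$ and $u_i^S = v_i^0$ otherwise. The hypothesis of continuity at $(v_1^0, \ldots, v_n^0)$ forces the sum over $S \neq \emptyset$ to tend to $0$ in $V_0$ as $(h_1, \ldots, h_n) \to 0$. To extract each individual cross-term I induct on $|S|$: specializing the joint limit so that $h_i = 0$ for $i \notin S_0$ kills every summand indexed by $S \not\subseteq S_0$, and inductively knowing the convergence of all $\omega(u_1^{S'}, \ldots, u_n^{S'})$ for $\emptyset \neq S' \subsetneq S_0$ forces $\omega(u_1^{S_0}, \ldots, u_n^{S_0}) \to 0$ as $(h_i)_{i \in S_0} \to 0$. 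The case $S_0 = \{1, \ldots, n\}$ gives continuity of $\omega$ at the origin.

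For the second step, fix a point $(c_1, \ldots, c_n)$ and a $0$-neighborhood $U \subeq V_0$. Continuity at $0$ supplies balanced $0$-neighborhoods $W_i \subeq V_i$ with $\omega(W_1 \times \cdots \times W_n) \subeq U$; since every $0$-neighborhood in a topological vector space is absorbing, there exist $\lambda_i > 0$ and $w_i \in W_i$ with $c_i = \lambda_i w_i$. Expanding again one obtains
\[ \omega(c_1 + h_1, \ldots, c_n + h_n) - \omega(c_1, \ldots, c_n) = \sum_{\emptyset \neq S \subseteq \{1,\ldots,n\}} \omega(z_1^S, \ldots, z_n^S), \]
with $z_i^S = h_i$ if $i \in S$ and $z_i^S = c_i$ otherwise. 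Multilinearity converts each $c_i$ into $\lambda_i w_i$ and pulls out the scalar, so $\omega(z_1^S, \ldots, z_n^S)$ lies in $\bigl(\prod_{i \notin S} \lambda_i\bigr) U$ whenever $h_i \in W_i$ for $i \in S$. Rescaling $U$ down by the product of all $\lambda_i$'s yields continuity of $\omega$ at $(c_1, \ldots, c_n)$.

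The technically delicate point is the induction in Step~1, which must isolate each multilinear cross-term from the single aggregate limit supplied by the hypothesis; once this is done, the absorbing property of $0$-neighborhoods makes Step~2 essentially formal.
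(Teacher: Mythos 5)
Your proposal is correct in substance, but it takes a different route to continuity at the origin than the paper does. The paper expands $\omega(v_1-v_1^0,\ldots,v_n-v_n^0)$ as an alternating sum of the $2^n$ terms obtained by replacing some of the arguments $v_i$ by $v_i^0$; each such term is $\omega$ composed with a continuous substitution map that fixes the point $(v_1^0,\ldots,v_n^0)$, hence is continuous there by the hypothesis alone, and continuity of $\omega$ at $(0,\ldots,0)$ follows in one stroke, with no induction. You instead expand $\omega(v_1^0+h_1,\ldots,v_n^0+h_n)$ and must then run an induction over subsets $S_0$ to isolate each cross term from the single aggregate limit (setting $h_i=0$ off $S_0$ and subtracting the inductively controlled terms). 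This is legitimate, but it is the inverted form of the identity the paper writes down directly, so you pay extra bookkeeping for the same conclusion. Conversely, for the step "continuity at $0$ implies continuity everywhere" the paper merely cites the fact as well known, whereas you supply the argument; that is a genuine addition, and it is also the one place where your write-up is imprecise.

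The imprecision: "rescaling $U$ down by the product of all $\lambda_i$'s" is circular as stated, because the $\lambda_i$ were produced from the absorbing property of the neighborhoods $W_i$, which were themselves chosen for the given $U$; shrinking $U$ changes the $W_i$ and hence the admissible $\lambda_i$. The standard repair is to shrink the increments, not the target: given a $0$-neighborhood $U_0$ in $V_0$, first choose a balanced $U_1$ with the $(2^n-1)$-fold sum $U_1+\cdots+U_1\subseteq U_0$ (your sketch also glosses over the need to absorb the sum of all cross terms), then balanced $W_i$ with $\omega(W_1\times\cdots\times W_n)\subseteq U_1$, write $c_i=\lambda_i w_i$ with $\lambda_i\geq 1$ and $w_i\in W_i$, put $\Lambda:=\prod_i\lambda_i$, and require $h_i\in\Lambda^{-1}W_i$. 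Each cross term then equals a scalar of modulus at most $1$ times an element of $\omega(W_1\times\cdots\times W_n)$, hence lies in $U_1$ by balancedness, and the sum lies in $U_0$. With this adjustment your argument is complete and proves exactly the statement the paper leaves to the reader.
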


\begin{prf} 
For $n = 1$ this is obvious. If $n>1$ then we can write
\[ \omega(v_1 - v_1^0, \ldots, v_n - v_n^0) 
= \omega(v_1, \ldots, v_n) \pm \cdots + (-1)^n 
\omega(v_1^0, \ldots, v_n^0). \] 
Each of the summands on the right hand side is a function of $(v_1,\ldots,v_n)$. Continuity of $\omega$ at $(v_1^0,\ldots, v_n^0)$ implies that all 
of these summands  are continuous at  $(v_1^0,\ldots, v_n^0)$. 
We conclude that $\omega$ is continuous at $(0,\ldots, 0)$, and 
it is well-known that this implies the continuity of~$\omega$. 
\end{prf}

\begin{lem}
  \mlabel{lem:contmulti} 
Let $(\pi, V)$ be a continuous representation 
of the Fr\'echet--Lie group $G$ with the Trotter property on the metrizable 
locally convex space $V$. Then, for each $v \in \cD^n$, 
the map 
$$\omega_v^n \: \g^n \to V, \quad 
(x_1,\ldots, x_n) \mapsto \oline{\dd\pi}(x_1)
\oline{\dd\pi}(x_2) \cdots \oline{\dd\pi}(x_n)v $$
is continuous and $n$-linear. 
\end{lem}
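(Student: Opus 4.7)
The plan is to argue by induction on $n$, combining Theorem \ref{thm:additive} with Baire's theorem on functions of first class.

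The base case $n=1$ is handled as follows. The continuous maps $f_k(x):=k(\pi(\exp_G(x/k))v-v)$ converge pointwise to $\omega_v$ on $\cD^1$. Since $\g$ is Fr\'echet (hence Baire) and $V$ is metrizable, $\omega_v$ is of Baire class $1$ and therefore continuous on a dense $G_\delta$ subset of $\g$. Being linear by Theorem \ref{thm:additive}, continuity at a single point upgrades to continuity everywhere.

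For the inductive step, fix $v\in\cD^n$. First, differentiating the identity $\pi(\exp_G(sx))\pi(g)=\pi(g)\pi(\exp_G(s\Ad(g^{-1})x))$ at $s=0$ and iterating shows that $\pi(g)$ preserves each $\cD^k$ and satisfies, for $w\in\cD^{n-1}$,
\[
\omega_{\pi(g)w}^{n-1}(x_1,\ldots,x_{n-1})=\pi(g)\,\omega_w^{n-1}\bigl(\Ad(g^{-1})x_1,\ldots,\Ad(g^{-1})x_{n-1}\bigr).
\]
The $n$-linearity of $\omega_v^n$ is then obtained slot by slot from Theorem \ref{thm:additive}: in the $k$-th slot, the intermediate vector $u_k:=\oline{\dd\pi}(x_{k+1})\cdots \oline{\dd\pi}(x_n)v$ lies in $\cD^k\subseteq\cD^1$, so $\omega_v^n(x_1,\ldots,x_n)=\oline{\dd\pi}(x_1)\cdots\oline{\dd\pi}(x_{k-1})\,\omega_{u_k}(x_k)$ is linear in $x_k$ by Theorem \ref{thm:additive} and the linearity of the operators $\oline{\dd\pi}(x_j)$.

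For continuity, introduce
\[
F(t,x_1,\ldots,x_n):=\pi(\exp_G(tx_n))\,\omega_v^{n-1}\bigl(\Ad(\exp_G(-tx_n))x_1,\ldots,\Ad(\exp_G(-tx_n))x_{n-1}\bigr),
\]
which is jointly continuous on $\R\times\g^n$ by the induction hypothesis. Factoring $F(t,x)=\pi(\exp_G(tx_n))A(t,x)$ with $A(t,x):=\omega_v^{n-1}(\Ad(\exp_G(-tx_n))x_1,\ldots,\Ad(\exp_G(-tx_n))x_{n-1})$, the product rule (valid because $A(0,x)\in\cD^1\subseteq\cD_{x_n}$) together with the chain rule for the continuous $(n-1)$-linear $\omega_v^{n-1}$ composed with the smooth curves $t\mapsto\Ad(\exp_G(-tx_n))x_j$ yields
\[
F'(0,x)=\oline{\dd\pi}(x_n)\omega_v^{n-1}(x_1,\ldots,x_{n-1})+\sum_{j=1}^{n-1}\omega_v^{n-1}(x_1,\ldots,-[x_n,x_j],\ldots,x_{n-1}).
\]
A combinatorial rearrangement using the commutator identity $[\oline{\dd\pi}(x),\oline{\dd\pi}(y)]w=\oline{\dd\pi}([x,y])w$ on $\cD^2$ identifies this with $\oline{\dd\pi}(x_1)\cdots \oline{\dd\pi}(x_n)v=\omega_v^n(x)$. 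Consequently the continuous functions $G_k(x):=k(F(1/k,x)-F(0,x))$ converge pointwise to $\omega_v^n$, so $\omega_v^n$ is of Baire class $1$. Since $\g^n$ is Baire and $V$ is metrizable, $\omega_v^n$ is continuous on a dense $G_\delta$ of $\g^n$, hence at some point; by $n$-linearity and Lemma \ref{lem:cont-m} this forces continuity on all of $\g^n$.

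The main obstacle is the identification $F'(0,x)=\omega_v^n(x)$. The naive attempt to pull the $t$-derivative inside the iterated unbounded composition $\oline{\dd\pi}(x_1)\cdots\oline{\dd\pi}(x_{n-1})$ is not legitimate; one must instead differentiate the factored form of $F$ by the product rule and reconcile the resulting commutator correction with the formal expression $\omega_v^n(x)$ via the Lie-algebra identity $[\oline{\dd\pi}(x),\oline{\dd\pi}(y)]=\oline{\dd\pi}([x,y])$ on $\cD^2$. This identity is itself proved by differentiating $\oline{\dd\pi}(y)\pi(\exp_G(sx))w=\pi(\exp_G(sx))\oline{\dd\pi}(\Ad(\exp_G(-sx))y)w$ at $s=0$ and invoking closedness of the one-parameter generator $\oline{\dd\pi}(y)$, using that $v\in\cD^n$ places all intermediate vectors in sufficiently deep $\cD^k$ for the rearrangement to be well defined.
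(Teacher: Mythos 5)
Your argument is correct in substance, and its endgame is exactly the paper's: approximate $\omega_v^n$ pointwise by continuous maps, invoke the Baire/Bourbaki result on limits of continuous functions from the Baire space $\g^n$ into the metrizable space $V$ to get one point of continuity, and upgrade via $n$-linearity (from Theorem \ref{thm:additive}) and Lemma \ref{lem:cont-m}; your base case $n=1$ is essentially the paper's induction step in miniature. Where you genuinely diverge is in how the approximating family is built in the inductive step. You put the difference quotient on the \emph{innermost} operator $\oline{\dd\pi}(x_n)$, which forces you to conjugate by $\Ad(\exp_G(-tx_n))$, differentiate the factored form by the product and chain rules, and then reconcile the resulting commutator corrections with $\omega_v^n$ via the identity $[\oline{\dd\pi}(x),\oline{\dd\pi}(y)]w=\oline{\dd\pi}([x,y])w$ on $\cD^2$, whose proof in turn needs closedness of the generators $\oline{\dd\pi}(y)$. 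These two auxiliary facts do hold under the hypotheses (closedness follows from local equicontinuity of $\pi$, which the continuity of the action gives by the tube-lemma argument of Lemma \ref{lem:2.3}, together with the integral representation of $\pi(\exp_G(ty))v_n-v_n$; the commutator relation then follows by your differentiation of $\oline{\dd\pi}(y)\pi(\exp_G(sx))w$, using continuity of $\omega_w$ from the $n=1$ case), but you only sketch them, and they constitute the real bulk of your route. The paper bypasses all of this with one observation: differentiate in the \emph{outermost} slot, setting $F_t(x_1,\ldots,x_n):=\frac{1}{t}\bigl(\pi(\exp_G(tx_1))\,\omega_v^{n-1}(x_2,\ldots,x_n)-\omega_v^{n-1}(x_2,\ldots,x_n)\bigr)$. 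Since $\omega_v^{n-1}(x_2,\ldots,x_n)\in\cD^1$, the pointwise limit as $t\to0$ is $\oline{\dd\pi}(x_1)\omega_v^{n-1}(x_2,\ldots,x_n)=\omega_v^n(x_1,\ldots,x_n)$ by the very definition of $\oline{\dd\pi}(x_1)$ on its domain, and continuity of $F_t$ follows from the induction hypothesis and continuity of the action; no conjugation, no commutator identity, no closedness is needed. So your proof works, but it buys nothing extra (your auxiliary steps also rest on the Trotter property through Theorem \ref{thm:additive}), while the outermost-slot choice reduces the whole inductive step to a one-line pointwise limit.
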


\begin{prf} Theorem~\ref{thm:additive}  
implies that $\omega_v^n$ is $n$-linear. 
We argue by induction on $n \in \N_0$ that it is continuous. 
For $n = 0$ this is trivial. 
Now we assume $n>0$ and that $\omega_w^{n-1} \: \g^{n-1} \to V$ 
is a continuous $(n-1)$-linear map for every $w \in \cD^{n-1}$. 

Hence, for $t>0$, the maps $F_t \:  \g^n \to V$, defined by 
\[ F_t(x_1, x_2,\ldots, x_n) := 
\frac{1}{t}\Big(\pi(\exp_G (t x_1)) \omega_v^{n-1}(x_2,\ldots, x_n)- 
\omega_v^{n-1}(x_2,\ldots, x_n)\Big) \] 
are continuous and satisfy 
\[ \lim_{ n \to \infty} F_{\frac{1}{n}}(x_1, x_2,\ldots, x_n) 
= \omega_v^n(x_1,\ldots, x_n).\] 
Since $V$ is metrizable and $\g^n$ is a Baire space, it 
follows from \cite[Ch. IX, \S 5, Ex. 22(a)]{Bou74} that 
the set of discontinuity points of $\omega_v^n$ is of the first category, 
hence not all of $\g^n$. We conclude that there exists a point in which 
$\omega_v^n$ is continuous, so that its continuity follows from 
Lemma~\ref{lem:cont-m}. 
\end{prf}

\begin{thm} \mlabel{thm:2.9}
Let $(\pi, V)$ be a continuous representation 
of the locally exponential Fr\'echet--Lie group $G$ 
on the metrizable locally convex space $V$. 
Then $\cD^k$ coincides with the space of $\cC^k$-vectors for any 
$k \in \N\cup \{\infty\}$. 
\end{thm}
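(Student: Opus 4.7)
\begin{proof}
The inclusion $V^k(\pi) \subseteq \cD^k$ is automatic from Remark~\ref{rem:cn-vec}. For the reverse direction, I will combine the linearity result of Theorem~\ref{thm:additive} with the continuity result of Lemma~\ref{lem:contmulti} and the characterization of $\cC^k$-vectors in Lemma~\ref{lem:a.12}.

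First, I note that the hypotheses of Lemma~\ref{lem:contmulti} are met: by Proposition~\ref{locexpprp}, the locally exponential Fr\'echet--Lie group $G$ has the Trotter property, and $V$ is metrizable by assumption. Therefore, for every $n \in \N$ and every $v \in \cD^n$, the map
\[ \omega_v^n \: \g^n \to V, \quad (x_1,\ldots, x_n) \mapsto \oline{\dd\pi}(x_1)\cdots \oline{\dd\pi}(x_n)v \]
is continuous and $n$-linear.

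Now let $k \in \N$ and fix $v \in \cD^k$. Since $\cD^k \subseteq \cD^n$ for all $n \leq k$ (Remark~\ref{rem:cn-vec}(b)), the preceding observation applied for each such $n$ shows that all of the maps $\omega_v^n$ with $n \leq k$ are continuous and $n$-linear. Since $G$ is locally exponential, Lemma~\ref{lem:a.12} then yields that $v$ is a $\cC^k$-vector, i.e., $v \in V^k(\pi)$. This proves $\cD^k = V^k(\pi)$ for every $k \in \N$.

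For the case $k = \infty$, we use $\cD^\infty = \bigcap_{n \in \N} \cD^n$ and $V^\infty(\pi) = \bigcap_{n \in \N} V^n(\pi)$, so the equality $\cD^\infty = V^\infty(\pi)$ follows by intersecting the equalities from the finite case.
\end{proof}

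The only subtle point is that the $n = 1$ case of Lemma~\ref{lem:contmulti} already requires Theorem~\ref{thm:additive} to know $\omega_v$ is linear before the Baire category argument can be deployed; everything else is an assembly of the tools set up earlier in the section.
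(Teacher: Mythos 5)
Your proposal is correct and follows the same route as the paper: the paper's proof of Theorem~\ref{thm:2.9} is exactly the combination of Lemma~\ref{lem:a.12} with Lemma~\ref{lem:contmulti}, with the Trotter property supplied by Proposition~\ref{locexpprp}. Your write-up merely makes explicit the hypothesis-checking and the $k=\infty$ case, which the paper leaves implicit.
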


\begin{prf} It only remains to combine 
Lemma~\ref{lem:a.12} with Lemma~\ref{lem:contmulti}. 
\end{prf}

\begin{remark}
The preceding theorem generalizes the corresponding Banach results from \cite{Ne10} considerably 
with substantially simpler proofs.  
\end{remark}
\section{Differentiable vectors for unitary representations} 
\mlabel{sec:4}

The main goal of this section is to prove Theorem~\ref{thm:2.1} which,  for unitary representations, extends 
Theorem~\ref{thm:2.9} to groups with the Trotter property.

In this section 
$G$ is a Lie group with a smooth exponential function. 
We shall see in this section, that we can weaken the assumption of local exponentiality in 
Theorem~\ref{thm:2.9} for unitary representations. 

\begin{defn} Let $(\pi, \cH)$ be a unitary representation of the Lie group $G$. 
We say that $\pi$ is: 
\begin{description}
\item[\rm(i)] {\it differentiable} if the space $\cH^1$ of $\cC^1$-vectors is dense in $\cH$. 
\item[\rm(ii)] {\it smooth} if the space $\cH^\infty$ of smooth vectors is dense in $\cH$. 
\item[\rm(iii)] {\it analytic} if the space $\cH^\omega$ of analytic vectors is dense in $\cH$. 
\end{description}
\end{defn}

The following lemma supplements the general Lemma~\ref{lem:a.11c}. It implies in 
particular, that for unitary representations with a dense space of $\cC^1$-vectors, 
the $\cC^1$-vectors can be characterized in terms of one-parameter groups. 

\begin{lem} \mlabel{lem:2.1} 
Let $(\pi, \cH)$ be a differentiable unitary representation of the Lie group 
$G$. If $v \in \cD^1$ is such that the map 
$\omega_v \: \g \to \cH, x \mapsto \oline{\dd\pi}(x)v$ 
is continuous, then $v \in \cH^1$. 
\end{lem}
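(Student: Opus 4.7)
By Lemma~\ref{lem:a.11c}, since condition~(ii) is given, the task reduces to verifying condition~(i): for every smooth curve $\gamma\colon(-\eps,\eps)\to G$ with $\gamma(0)=\1$ and $\gamma'(0)=x$, the derivative $\frac{d}{dt}\big|_{t=0}\pi(\gamma(t))v$ should exist in $\cH$ and equal $\omega_v(x)$. My plan is to first establish weak-type differentiability against the dense subspace $\cH^1$ by exploiting unitarity, and then to promote this to strong differentiability by combining spectral information about the one-parameter group $t\mapsto\pi(\exp_G(tx))$ with a Hilbert-space convergence-of-norms argument.

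For the weak step, unitarity gives $\langle\pi(\gamma(t))v,w\rangle=\langle v,\pi(\gamma(t)^{-1})w\rangle$. When $w\in\cH^1$, the orbit map $\pi^w$ is $\cC^1$, so $t\mapsto\pi(\gamma(t)^{-1})w$ is $\cC^1$ with derivative $-\omega_w(x)$ at $t=0$; hence
\[
\frac{d}{dt}\bigg|_{t=0}\langle\pi(\gamma(t))v,w\rangle=-\langle v,\omega_w(x)\rangle=\langle\omega_v(x),w\rangle,
\]
where the last identity is the skew-symmetry of the infinitesimal generator $\oline{\dd\pi}(x)$ on its common domain, which is valid because $i\oline{\dd\pi}(x)$ is self-adjoint by Stone's theorem applied to $t\mapsto\pi(\exp_G(tx))$.

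Next, I would decompose $\gamma(t)=\exp_G(tx)\delta(t)$ for a smooth auxiliary curve $\delta$ with $\delta(0)=\1$ and $\delta'(0)=0$. Unitarity of $\pi(\exp_G(tx))$ yields the splitting
\[
\|\pi(\gamma(t))v-v-t\omega_v(x)\|\le\|\pi(\delta(t))v-v\|+\|\pi(\exp_G(tx))v-v-t\omega_v(x)\|.
\]
The second summand is $o(t)$ by spectral calculus: letting $E_x$ be the spectral measure of $i\oline{\dd\pi}(x)$, the condition $v\in\cD_x$ gives $\int\lambda^2\,d\|E_x(\cdot)v\|^2=\|\omega_v(x)\|^2<\infty$, and therefore
\[
\|\pi(\exp_G(tx))v-v-t\omega_v(x)\|^2=\int|e^{it\lambda}-1-it\lambda|^2\,d\|E_x(\cdot)v\|^2=o(t^2)
\]
by dominated convergence.

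The main obstacle is then the residual estimate $\|\pi(\delta(t))v-v\|=o(t)$ for smooth $\delta$ with $\delta(0)=\1$ and $\delta'(0)=0$. Running the weak step with $\delta$ in place of $\gamma$ (where $\omega_w(\delta'(0))=0$ by linearity of $\omega_w$ for $w\in\cH^1$) gives only the $w$-dependent estimate $\langle\pi(\delta(t))v-v,w\rangle=o_w(t)$. To upgrade this to a uniform $o(t)$ norm bound, I would combine density of $\cH^1$ and unitarity via an approximation $v=w+u$ with $w\in\cH^1$ and $\|u\|$ controlled jointly with $t$, together with the classical Hilbert-space principle that weak convergence paired with convergence of norms forces strong convergence. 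Extracting a genuine uniform $o(t)$ rate from the nonuniform pointwise data is the technical heart of the proof.
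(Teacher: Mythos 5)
Your reduction to condition (i) of Lemma~\ref{lem:a.11c} and your treatment of the term along $\exp_G(tx)$ are fine (indeed $v\in\cD_x$ already says $\|\pi(\exp_G(tx))v-v-t\,\oline{\dd\pi}(x)v\|=o(t)$, no spectral calculus needed). The genuine gap is the residual estimate $\|\pi(\delta(t))v-v\|=o(t)$ for a smooth curve $\delta$ with $\delta(0)=\1$, $\delta'(0)=0$: this is exactly as hard as the lemma itself, and the upgrade you sketch cannot deliver it. The principle ``weak convergence plus convergence of norms implies strong convergence'' would have to be applied to the difference quotients $\tfrac1t(\pi(\delta(t))v-v)$, and for that you would need $\|\tfrac1t(\pi(\delta(t))v-v)\|\to 0$ --- which is precisely the assertion to be proved; unitarity only gives $\|\pi(\delta(t))v\|=\|v\|$, which upgrades weak convergence of $\pi(\delta(t))v$ to strong convergence, i.e.\ an $o(1)$ bound, never an $o(t)$ rate. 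Weak $o_w(t)$ estimates against the dense subspace $\cH^1$, even together with norm-boundedness of the quotients, do not force norm convergence (think of a sequence going weakly to $0$ along an orthonormal system), so ``the technical heart'' you defer is a real hole, not a routine approximation.

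The paper closes this hole differently, and you never exploit the one hypothesis that makes it work: the continuity of $\omega_v$. Writing $\xi(t)$ for the left logarithmic derivative of $\gamma$, one differentiates $\alpha_w(t):=\la\pi(\gamma(t))v,w\ra$ for $w\in\cH^1$ at \emph{every} $t$ (not just $t=0$), obtaining $\alpha_w'(t)=\la\beta(t),w\ra$ with $\beta(t):=\pi(\gamma(t))\,\oline{\dd\pi}(\xi(t))v=\pi(\gamma(t))\,\omega_v(\xi(t))$. Continuity of $\omega_v$ and of the $G$-action makes $\beta$ a norm-continuous curve, so integrating the scalar identity and using density of $\cH^1$ yields the vector-valued identity $\pi(\gamma(t))v=v+\int_0^t\beta(\tau)\,d\tau$, whence the orbit along $\gamma$ is $\cC^1$ with derivative $\beta(0)=\oline{\dd\pi}(x)v$ at $0$; Lemma~\ref{lem:a.11c} then finishes the proof. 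If you want to salvage your decomposition $\gamma(t)=\exp_G(tx)\delta(t)$, you would still need an argument of this integral type for $\delta$, so the decomposition buys nothing; the missing idea is the global-in-$t$ weak derivative plus the fundamental theorem of calculus for the norm-continuous candidate derivative.
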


\begin{prf} Let $\gamma \: [-\eps,\eps] \to G$ be a smooth curve
with $\gamma(0) = \1$ and $\gamma'(0) = x$, and 
\[
\xi:[-\eps,\eps]\to\g\ ,\ \xi(t) := \dd\ell_{\gamma(t)^{-1}}(\gamma(t))(\gamma'(t))
\] 
be its left logarithmic derivative. 
For $w \in \cH^1$ we then obtain the relation 
\[ \frac{d}{dt} \pi(\gamma(t)^{-1})w = -\oline{\dd\pi}(\xi(t)) \pi(\gamma(t)^{-1}) w \] 
and hence 
\begin{equation}
\mlabel{dalphaw}
\frac{d}{dt} \la \pi(\gamma(t))v, w\ra  
= -\la v, \oline{\dd\pi}(\xi(t)) \pi(\gamma(t)^{-1}) w \ra 
= \la \pi(\gamma(t))\oline{\dd\pi}(\xi(t)) v, w \ra.
\end{equation}

Set $\alpha(t) := \pi(\gamma(t))v$ and 
$\alpha_w(t) := \la \alpha(t),w\ra$. 
The curve $\beta(t) := \pi(\gamma(t))\oline{\dd\pi}(\xi(t))v$ 
is also continuous because $\omega_v$ is continuous, and the 
action of $G$ on $\cH$ defined by $\pi$ is continuous. 
By \eqref{dalphaw}, for each $w \in \cH^1$ the function 
$\alpha_w$ is differentiable and 
$\alpha_w'(t) = \beta_w(t) := \la \beta(t), w \ra$. Since  $\beta_w$ is continuous, it follows that 
\[ \la \int_0^t \beta(\tau)\, d\tau, w \ra 
=  \int_0^t \beta_w(\tau)\, d\tau = \alpha_w(t) - \alpha_w(0) 
=  \la \alpha(t) - \alpha(0), w\ra.\] 
Since $\cH^1$ is assumed to be dense in $\cH$, we obtain 
$\alpha(t) = \alpha(0) + \int_0^t \beta(\tau)\, d\tau.$ 
Now the continuity of $\beta$ shows that $\alpha$ is $\cC^1$ with 
$\alpha'(0)= \beta(0) = \oline{\dd\pi}(x)v$. 
Finally Lemma~\ref{lem:a.11c} shows that $v$ is a $\cC^1$-vector. 
\end{prf}

Recall that $\cH^n$ denotes the space of $\cC^n$-vectors of $(\pi,\cH)$.
\begin{thm} \mlabel{thm:2.1} 
Let $(\pi, \cH)$ be a differentiable unitary representation of the 
Fr\'echet--Lie group $G$ with the Trotter property. 
Then $\cD^n=\cH^n$ for every $n\in\N\cup\{\infty\}$.
\end{thm}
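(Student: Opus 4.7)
The inclusion $\cH^n \subseteq \cD^n$ is immediate from Remark \ref{rem:cn-vec}(a), so the content is $\cD^n \subseteq \cH^n$, which I prove by induction on $n \in \N$; the case $n = \infty$ then follows by taking intersections.

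For the base case $n = 1$, given $v \in \cD^1$, the Trotter property lets us apply Theorem \ref{thm:additive} to see that $\omega_v \colon \g \to \cH$ is linear; the Fr\'echet and metrizability hypotheses let us apply Lemma \ref{lem:contmulti} with $n = 1$ to see that $\omega_v$ is continuous; and the differentiability of $\pi$ lets us apply Lemma \ref{lem:2.1} to conclude that $v \in \cH^1$.

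For the inductive step, assume $\cD^k = \cH^k$ for every $k < n$ and let $v \in \cD^n$. Then $v \in \cD^{n-1} = \cH^{n-1}$, so $\pi^v$ is already $\cC^{n-1}$; for each $x \in \g$, $\oline{\dd\pi}(x)v \in \cD^{n-1} = \cH^{n-1}$, so the directional derivative $g \mapsto D_x\pi^v(g) = \pi(g)\oline{\dd\pi}(x)v$ is $\cC^{n-1}$; and Lemma \ref{lem:contmulti} provides continuity and $k$-linearity of $\omega_v^k$ for every $k \le n$. To promote $\pi^v$ from $\cC^{n-1}$ to $\cC^n$, I would apply, for every smooth curve $\gamma \colon [-\eps,\eps] \to G$ with left logarithmic derivative $\xi$, the integral identity established in the proof of Lemma~\ref{lem:2.1},
\begin{equation*}
\pi(\gamma(t))v - v = \int_0^t \pi(\gamma(\tau))\oline{\dd\pi}(\xi(\tau))v\, d\tau,
\end{equation*}
and verify that the integrand is itself a $\cC^{n-1}$-curve in $\cH$; this forces $t \mapsto \pi(\gamma(t))v$ to be $\cC^n$, and combined with the joint continuity of $(g,h_1,\dots,h_n) \mapsto \pi(g)\omega_v^n(h_1,\dots,h_n)$ supplied by Lemma \ref{lem:contmulti}, it yields $v \in \cH^n$.

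The main obstacle is establishing the claimed $\cC^{n-1}$-regularity of the integrand without the benefit of local exponentiality on $G$. My plan is to differentiate in $\tau$ via the product and chain rules, so that each derivative up to order $n - 1$ is a linear combination of terms $\pi(\gamma(\tau))\omega_v^k\bigl(\xi^{(j_1)}(\tau),\dots,\xi^{(j_k)}(\tau)\bigr)$ with $k \le n$; joint continuity of these expressions then follows from continuity of each $\omega_v^k$, continuity of the $G$-action on $\cH$, and smoothness of $\xi$, closing the induction.
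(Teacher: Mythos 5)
Your base case is exactly the paper's argument (Theorem~\ref{thm:additive}, Lemma~\ref{lem:contmulti}, Lemma~\ref{lem:2.1}), and the ingredients you assemble for the inductive step are the right ones, but the final inference there has a genuine gap. You establish that $t\mapsto\pi(\gamma(t))v$ is $\cC^n$ for every fixed smooth curve $\gamma$ and then conclude, together with the joint continuity of $(g,h_1,\dots,h_n)\mapsto\pi(g)\omega_v^n(h_1,\dots,h_n)$, that $v\in\cH^n$. In the Michal--Bastiani calculus used throughout the paper, $v\in\cH^n$ means that in charts the iterated directional derivatives $\dd^j\pi^v$ exist and are jointly continuous as maps $U\times\g^j\to\cH$ for all $j\le n$; being $\cC^n$ along every single smooth curve is strictly weaker for finite $n$ (already in finite dimensions, $f\circ c\in\cC^n$ for all smooth curves $c$ does not imply $f\in\cC^n$), and nothing in your argument produces the mixed directional derivatives at varying base points. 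The paper has no higher-order analogue of Lemma~\ref{lem:a.11c} that would perform this upgrade for a group that is merely assumed to have the Trotter property --- Lemma~\ref{lem:a.12} does it only under local exponentiality --- so supplying this step is precisely the content of the induction, not a routine consequence of your curve-wise computation.

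The paper closes the induction differently, and you can repair your argument by following it: since $v\in\cD^1=\cH^1$, the map $\pi^v$ is $\cC^n$ if and only if $T(\pi^v)\colon G\times\g\to\cH$ is $\cC^{n-1}$, and one identifies $T(\pi^v)(g,x)=\pi(g)\oline{\dd\pi}(x)v=\pi^w(g)$ with $w:=\oline{\dd\pi}(x)v\in\cD^{n-1}=\cH^{n-1}$ by the induction hypothesis. A direct computation (essentially the same algebra as your differentiation of the integrand, but performed on $T(\pi^v)$ as a function of $(g,x)$ rather than along one curve) shows that all directional derivatives of $T(\pi^v)$ of order $j\le n-1$ are sums of terms of the form $\pi(g)\omega_v^j(x_1,\dots,x_j)$, with only linear dependence on the $\g$-variables; their joint continuity follows from the continuity of the $G$-action together with the continuity of the multilinear maps $\omega_v^j$ from Lemma~\ref{lem:contmulti} (cf.\ Remark~\ref{rem:cn-vec}(c)). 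This gives $T(\pi^v)\in\cC^{n-1}$, hence $v\in\cH^n$, without ever invoking a passage from curve-wise regularity to $\cC^n$-regularity. If you prefer to keep your integral identity, you must in any case prove such a passage as a separate lemma; as written, that step fails.
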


\begin{prf} Let $n\in\N$ and $v \in \cD^n$. By Lemma~\ref{lem:contmulti}, the map 
\[ \omega_v^n \: \g^n \to \cH, \quad 
(x_1,\ldots, x_n) \mapsto 
\oline{\dd\pi}(x_1) \cdots \oline{\dd\pi}(x_n)v \] 
is continuous and $n$-linear. 
For $n = 1$, Lemma~\ref{lem:2.1} implies that $v$ is a $\cC^1$-vector. 
For $n>1$, the inductive argument for the proof of \cite[Lem.~3.4]{Ne10} works 
without change. For the reader's convenience we give the details. 

Let $n>1$ and $v\in\cD^n$. Recall that $\pi^v:G\to \cH$ denotes the orbit map of $v$, that is, $\pi^v(g):=\pi(g)v$.
Since $v\in \cD^1=\cH^1$, by induction hypothesis the map 
\[
T(\pi^v):T(G)\simeq G\times\g\to\cH\ ,\ (g,x)\mapsto T(\pi^v)(g,x)
\]
is well defined. It remains to prove that the latter map is $\cC^{n-1}$. Lemma \ref{lem:a.11c} implies that 
$T(\pi^v)(g,x)=\pi(g)\oline{\dd\pi}(x)v=\pi^w(g)$ 
where $w:=\oline{\dd\pi}(x)v\in\cD^{n-1}$. By induction hypothesis, $w\in \cH^{n-1}$. Thus, a 
direct calculation shows that the map $T(\pi^v)$ has directional derivatives of order $j\leq n-1$ and they are sums of terms of the form 
$
\pi(g)\omega_v^j(x_1,\ldots,x_j)w
$
for $j\leq n-1$. From Remark \ref{rem:cn-vec}(c) it follows that $T(\pi^v)$ is a $\cC^{n-1}$ map.

\end{prf}

\section{Unitary representations of Lie supergroups} 
\mlabel{sec:5}

We now apply Theorem~\ref{thm:2.1} to unitary representations 
of Lie supergroups $(G,\g)$. We begin by recalling the definition of Lie supergroups and their unitary representations. See \cite{CCTV06} and \cite{MNS11} for further details.

By a \emph{locally convex Lie superalgebra} we mean a Lie superalgebra 
$\g=\g_\eev\oplus \g_\ood$ 
over $\R$ or $\C$ with the following two properties.
\begin{itemize}
\item[(i)] $\g$ is a locally convex space and the Lie 
superbracket is continuous.  
\item[(ii)] $\g = \g_\eev \oplus \g_\ood$ is a topological direct sum, i.e., the involution 
$\sigma \: \g \to \g$ defined by $\sigma(x_0 + x_1) = x_0 - x_1$ for 
$x_0 \in \g_\eev$ and $x_1 \in \g_\ood$ is continuous. 
\end{itemize}
The parity of a homogeneous element $x\in\g$ is denoted by $p(x)\in\{\eev,\ood\}$.

\begin{definition}
\label{def-blsupergroup}
A \emph{Lie supergroup} is an ordered pair $(G,\g)$ with the following properties.
\begin{itemize}
\item[(i)] $G$ is a Lie group, modeled on a locally convex space. 
\item[(ii)] $\g$ is a locally convex Lie superalgebra over $\R$.
\item[(iii)] $\g_\eev=\L(G)$ is the Lie algebra of $G$. 
\item[(iv)] There exists a homomorphism 
$\Ad:G\to\mathrm{Aut}(\g)$ defining a smooth action 
$G\times \g \to \g $ by 
even automorphisms of $\g$. For every $x\in\g _\eev$ and 
$y\in\g$, we have $\dd\Ad_y(\1)(x)=[x,y]$ where $\Ad_y:G\to\g$ is defined by $\Ad_y(g):=\Ad(g)y$.

\end{itemize}
We refer to the homomorphism $\Ad:G\to \mathrm{Aut}(\g)$ 
of Definition \ref{def-blsupergroup}(iv) as the \emph{adjoint action} of 
$G$ on $\g$.
If $G$ is an analytic Lie group and the adjoint action of $G$ on 
$\g$ defines an analytic map $G \times \g \to \g$, then we call the Lie supergroup 
$(G,\g)$ \emph{analytic}. 
\end{definition}

\begin{definition}
\label{defi-smoothanalytic}
Let $(G,\g)$ be a Lie supergroup. A \emph{smooth unitary representation} 
of $(G,\g)$ is a triple $(\pi,\rho^\pi,\cH)$ satisfying the following properties.
\begin{enumerate}
\item[(SR1)] $(\pi,\cH)$ is a smooth unitary representation of $G$
on the $\mathbb Z_2$-graded Hilbert space $\cH$ such that, 
for every $g\in G$, the operator
$\pi(g)$ preserves the $\mathbb Z_2$-grading.
\item[(SR2)] For $\cB := \cH^\infty$, $\rho^\pi:\g\to\End_\C(\cB)$ 
is a representation of the Lie superalgebra $\g$. 
\item[(SR3)] $\rho^\pi(x)=\dd\pi(x)\big|_{\cB}$ for every $x\in\g_\eev$.
\item[(SR4)] $e^{-\frac{\pi i}{4}}\rho^\pi(x)$ 
is a symmetric operator for every $x\in\g_\ood$, i.e., 
$-i \rho^\pi(x) \subeq \rho^\pi(x)^*$. 
\item[(SR5)] Every element of the component group $G/G^\circ$ has a coset representative $g\in G$ such that $\pi(g)\rho^\pi(x)\pi(g)^{-1}=\rho^\pi(\Ad(g)x)$ for every $x\in\g_\ood$.
\end{enumerate}

If $(G,\g)$ is an analytic Lie supergroup, then an \emph{analytic unitary representation} 
of $(G,\g)$ is a triple $(\pi,\rho^\pi,\cH)$, where 
$(\pi, \cH)$ is an analytic 
unitary representation of $G$ on the $\mathbb Z_2$-graded Hilbert space $\cH$ such that, 
for every $g\in G$, the operator
$\pi(g)$ preserves the $\mathbb Z_2$-grading  and the other conditions above 
hold for $\cB := \cH^\omega$. 
\end{definition}

\begin{rem}
If $G$ is connected, then obviously (SR5) holds
trivially. This point is the main
difference
between Definition \ref{defi-smoothanalytic} above and the 
definition given in \cite[Def. 2]{CCTV06} for finite-dimensional Lie groups,
where it is assumed that 
\begin{equation}
\label{eqn-condin}
\pi(g)\rho^\pi(x)\pi(g)^{-1}=\rho^\pi(\Ad(g)x)\ \text{ for every $x\in\g_\ood$ and every $g\in G$,}  
\end{equation}
while  the infinitesimal action is supposed to satisfy a weaker condition. 
Indeed Proposition~\ref{proposition-conjugacy} below implies that for a (possibly disconnected) $G$ 
equation 
\eqref{eqn-condin} follows from Definition~\ref{defi-smoothanalytic}.  
\end{rem}

We will need a slightly more general notion than smooth and analytic unitary representations, which
we introduce in the next definition (see \cite[Def. 4.2]{MNS11}).

\begin{definition}
\label{definition-of-pseudo} Let $(G,\g)$ be a Lie supergroup. 
A \emph{pre-representation} of 
$(G,\g)$ is a 4-tuple 
$
(\,\pi,\cH,\cB,\rho^\cB\,)$
which satisfies
the following properties.
\begin{enumerate} 
\item[(PR1)] $(\pi,\cH)$ is a smooth unitary representation of $G$  on the 
$\mathbb Z_2$-graded Hilbert space $\cH=\cH_\eev\oplus\cH_\ood$. Moreover, $\pi(g)$ is an even operator for every $g\in G$.
\item[(PR2)] $\cB$ is a dense $\mathbb Z_2$-graded 
subspace of $\cH$ contained in $\cD^1 := \bigcap_{x\in \g_\eev}\cD\big(\oline{\dd\pi}(x)\big).$ 
\item[(PR3)] $\rho^\cB:\g\to \End_\mathbb C(\cB)$  
 is a representation of the Lie superalgebra $\g$.
\item[(PR4)]  If $x\in\g_\eev$ then $\rho^\cB(x)=\oline{\dd\pi}(x)\big|_\cB$ and 
$\rho^\cB(x)$ is essentially skew-adjoint.
\item[(PR5)] If $x\in \g_\ood$ then $e^{-\frac{\pi i}{4}}\rho^\cB(x)$ is a 
symmetric operator, i.e., 
$-i \rho^\cB(x) \subeq \rho^\cB(x)^*$. 

\item[(PR6)] For every element of the component group $G/G^\circ$, there exists a coset representative
$g\in G$ such that $\pi(g)^{-1}\cB\subeq \cB$ and 
\[
\pi(g)\rho^\cB(x)\pi(g)^{-1}=\rho^\cB(\Ad(g)x)\text{ for every $x\in\g_\ood$.}
\]
\end{enumerate}

\end{definition}
In the following we write
\[ \cD^\infty := \bigcap_{n\in \N}\cD^n 
\quad \mbox{ and }\quad 
\cD^n:=\bigcap_{x_1,\ldots,x_n\in\g_\eev}\cD\big(\oline{\dd\pi}(x_1)\cdots\oline{\dd\pi}(x_n)\big).\] 
The set of smooth (resp., analytic) vectors of the unitary representation $(\pi,\cH)$ is denoted by $\cH^\infty$ (resp., $\cH^\omega$).

\begin{rem}
(i) Observe that in (PR3) there are no continuity assumptions on the  
map~$\rho^\cB$. 

(ii) (PR2/3) imply that $\cB\subeq \cD^\infty$. 
In addition, Theorem~\ref{thm:2.1} asserts that, if $G$ is a Fr\'echet--Lie group with the Trotter property, 
then $\cH^\infty = \cD^\infty$, so that we obtain $\cB \subeq \cH^\infty$. 
\end{rem}

The following lemma is stated in \cite[Lemma~4.4]{MNS11} for 
Banach--Lie supergroups, but is remains true with the same proof in the general context.

\begin{lemma}
\label{lem-extension-of-operators}
Let $(G,\g)$ be a Lie supergroup and
$\big(\pi,\cH,\cB,\rho^\cB\big)$
be a pre-representation of $(G,\g)$. Then the following assertions hold.
\begin{enumerate}
\item[\rm(i)] For every $x\in \g_\eev$, we have $\overline{\rho^\cB(x)}=\oline{\dd\pi}(x)$. In particular
$\cD^\infty\subeq \cD^1 \subeq 
\cD(\overline{\rho^\cB(x)})$. 
\item[\rm(ii)] For every $x\in \g_\ood$, the operator $e^{-\frac{\pi i}{4}}\rho^\cB(x)$ is essentially 
self-adjoint and $\overline{\rho^\cB(x)}^2=\frac{1}{2}\oline{\dd\pi}([x,x])$. In particular 
$\cD^\infty\subeq \cD^1 \subeq \cD(\overline{\rho^\cB(x)})$. 
%\item[(iii)] For every $x\in\g$, the operator $\rho^\cB(x)$ is closable.
\end{enumerate}
\end{lemma}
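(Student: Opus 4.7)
The plan is to treat (i) and (ii) separately, using operator-theoretic arguments that are insensitive to whether the modelling space is Banach or Fr\'echet.

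For (i), I would apply Stone's theorem to the strongly continuous unitary one-parameter group $t\mapsto \pi(\exp_G(tx))$ provided by (PR1), obtaining that its infinitesimal generator $\oline{\dd\pi}(x)$ is skew-adjoint. By (PR4), $\rho^\cB(x)=\oline{\dd\pi}(x)|_\cB$ is essentially skew-adjoint, so $\overline{\rho^\cB(x)}$ is a skew-adjoint operator contained in the skew-adjoint operator $\oline{\dd\pi}(x)$. Maximality of skew-adjoint operators forces equality, proving (i); the inclusion $\cD^1\subeq \cD(\overline{\rho^\cB(x)})$ is then immediate from the definition of $\cD^1$.

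For (ii), set $A:=e^{-i\pi/4}\rho^\cB(x)$, which is symmetric by (PR5). Since $\rho^\cB$ is a representation of the Lie superalgebra $\g$ on $\cB$ and $[x,x]=2x\cdot x$ in the super-bracket sense, one obtains the identity
\[ \rho^\cB(x)^2=\tfrac12\rho^\cB([x,x]) \]
on $\cB$. Because $[x,x]\in\g_\eev$, applying (i) to $[x,x]$ yields
\[ \overline{A^2}=-\tfrac{i}{2}\,\overline{\rho^\cB([x,x])}=-\tfrac{i}{2}\,\oline{\dd\pi}([x,x]), \]
which is self-adjoint since $-i$ times a skew-adjoint operator is self-adjoint. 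To upgrade this to essential self-adjointness of $A$ itself, I would invoke Nelson's analytic vector theorem: the identity $\|A^{2n}v\|=2^{-n}\|\oline{\dd\pi}([x,x])^n v\|$ on $\cB$, combined with the symmetric-operator inequality $\|A^{2n+1}v\|^2\le \|A^{2n+2}v\|\,\|A^{2n}v\|$, reduces summability of $\sum_n\|A^n v\|t^n/n!$ to analyticity of $v$ as a vector of the one-parameter unitary group generated by $\oline{\dd\pi}([x,x])$. A dense set of such analytic vectors inside $\cB$ is supplied by the smoothness (respectively analyticity) hypothesis on $\pi$, together with the $\cB$-invariance of $\rho^\cB([x,x])$ guaranteed by (PR3).

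Setting $S:=\overline{A}$, now self-adjoint, one has $\overline{\rho^\cB(x)}=e^{i\pi/4}S$, and from $A\subseteq S$ it follows that $A^2\subseteq S^2$ with both $\overline{A^2}$ and $S^2$ self-adjoint, forcing $\overline{A^2}=S^2$. Hence
\[ \overline{\rho^\cB(x)}^2=e^{i\pi/2}S^2=i\,\overline{A^2}=\tfrac12\,\oline{\dd\pi}([x,x]), \]
and $\cD^1\subeq \cD(\oline{\dd\pi}([x,x]))=\cD(S^2)\subeq \cD(S)=\cD(\overline{\rho^\cB(x)})$, giving the stated domain inclusion. The main obstacle throughout is the essential self-adjointness step in (ii); Nelson's theorem is the cleanest tool, but verifying its hypotheses requires producing analytic vectors for the single self-adjoint operator $\oline{\dd\pi}([x,x])$ inside the invariant subspace $\cB$, which is exactly where the smoothness hypothesis (PR1) enters in an essential way.
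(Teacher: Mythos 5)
Part (i) of your argument is correct and is essentially the standard one (the paper gives no proof of its own here; it cites \cite[Lemma 4.4]{MNS11} and remarks that the same proof carries over): $\rho^\cB(x)\subeq\oline{\dd\pi}(x)$ by (PR4), $\oline{\dd\pi}(x)$ is skew-adjoint by Stone's theorem, and a skew-adjoint operator admits no proper skew-adjoint extension, so $\overline{\rho^\cB(x)}=\oline{\dd\pi}(x)$. Likewise, in (ii) your algebraic reductions are fine: $A^2\res_\cB=-\tfrac i2\rho^\cB([x,x])\res_\cB$, part (i) applied to $[x,x]\in\g_\eev$ identifies $\overline{A^2}=-\tfrac i2\oline{\dd\pi}([x,x])$ as self-adjoint, and once $A$ is known to be essentially self-adjoint the equalities $\overline{A}^2=\overline{A^2}$ and $\overline{\rho^\cB(x)}^2=\tfrac12\oline{\dd\pi}([x,x])$ and the domain inclusions follow as you say.

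The genuine gap is the essential self-adjointness step itself. Nelson's theorem requires a \emph{dense} set of analytic vectors for $A$ lying in its domain, i.e.\ vectors $v\in\cB$ with $\sum_n\|A^nv\|t^n/n!<\infty$; after your reduction this means vectors $v\in\cB$ that are analytic for $\rho^\cB([x,x])\res_\cB$. Nothing in (PR1)--(PR6) produces such vectors: (PR1) only asserts that the space of \emph{smooth} vectors of $\pi$ is dense, and smooth vectors of a one-parameter group are in general not analytic vectors for its generator; moreover $\cB$ is merely some dense subspace of $\cD^1$, and neither $\cH^\infty$ nor $\cH^\omega$ is assumed to be contained in $\cB$ (the whole point of the Stability Theorem is to extend $\rho^\cB$ beyond $\cB$, and even $\cB\subeq\cH^\infty$ is only obtained afterwards via Theorem \ref{thm:2.1}). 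So the claim that ``a dense set of such analytic vectors inside $\cB$ is supplied by the smoothness hypothesis'' is unsupported, and the argument does not close. The proof the paper relies on (that of \cite[Lemma 4.4]{MNS11}) avoids analytic vectors altogether: it invokes the operator-theoretic fact (of Masson--McClary/Jorgensen--Moore type, cf.\ \cite{JM84}) that a symmetric operator $A$ with a dense invariant domain $\cB$ such that $A^2\res_\cB$ is essentially self-adjoint is itself essentially self-adjoint, with $\overline{A}^2=\overline{A^2\res_\cB}$; combined with your (correct) identification of $\overline{A^2\res_\cB}$ via part (i), this yields (ii) with no analyticity input. Replacing your Nelson step by this lemma would repair the proof.
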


\begin{defn}
Let $(\pi,\cH,\cB,\rho^\cB)$ be a pre-representation of a Lie supergroup 
$(G,\g)$. 
For every  $x=x_\eev+x_\ood\in\g^\C$  we
define a linear operator $\tilde\rho^\cB(x)$ on $\cH$ 
with  $\cD(\tilde\rho^\cB(x))=\cD^\infty$ as follows. If $x_\eev=a_\eev+ib_\eev$ and
$x_\ood=a_\ood+ib_\ood$, where $a_\eev,b_\eev\in \g_\eev$ and $a_\ood,b_\ood\in\g_\ood$, 
then we put 
\[
\tilde\rho^\cB(x)v=\overline{\rho^\cB(a_\eev)}v+
i\overline{\rho^\cB(b_\eev)}v
+\overline{\rho^\cB(a_\ood)}v
+i\overline{\rho^\cB(b_\ood)}v 
\quad \mbox{ for } \quad  v\in\cD^\infty.\] 
\end{defn}

The following auxiliary lemma is \cite[Lemma~2.5]{MNS11}. 

\begin{lemma}
\label{lem-two-oper}
Let $P_1$ and $P_2$ be two symmetric operators on a complex Hilbert space $\cH$ such that 
$\mathcal D(P_1)=\mathcal D(P_2)$. Let $\mathscr L\sseq\mathcal D(P_1)$ 
be a dense linear subspace of $\cH$ such that 
$P_1\big|_{\mathscr L}=P_2\big|_{\mathscr L}$. Assume that the latter operator is essentially self-adjoint.
Then $P_1=P_2$.
\end{lemma}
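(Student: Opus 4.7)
The plan is to exhibit both $P_1$ and $P_2$ as restrictions of one and the same self-adjoint operator, and then to exploit the hypothesis that they share a common domain.

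Let $Q$ denote the common restriction $P_1\big|_{\mathscr L}=P_2\big|_{\mathscr L}$, which is a densely defined symmetric operator on $\cH$ since $\mathscr L$ is dense. By assumption $Q$ is essentially self-adjoint, so its closure $\overline Q$ is self-adjoint and coincides with $Q^*$. The key observation I would invoke is the standard fact that any symmetric extension $P$ of $Q$ is automatically contained in $Q^*$: indeed, from $Q\subseteq P$ one obtains $P^*\subseteq Q^*$, and combining this with $P\subseteq P^*$ yields $P\subseteq Q^*=\overline Q$.

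Applying this observation to the symmetric operators $P_1$ and $P_2$, both of which extend $Q$, I obtain the inclusions
\[
P_1\subseteq \overline Q\qquad\text{and}\qquad P_2\subseteq \overline Q.
\]
Thus $P_1$ and $P_2$ are both restrictions of the single self-adjoint operator $\overline Q$ to their respective domains. Since by hypothesis $\mathcal D(P_1)=\mathcal D(P_2)$, restricting $\overline Q$ to this common domain gives the same operator, so $P_1=P_2$.

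The proof is essentially a one-line consequence of the uniqueness of the self-adjoint extension for an essentially self-adjoint operator, and there is no real obstacle; the only point to be careful about is the direction of the inclusions of adjoints, i.e.\ that symmetry of $P_i$ together with $P_i\supseteq Q$ really does force $P_i\subseteq Q^*$. Once this is recorded, the conclusion is immediate from the equality of domains.
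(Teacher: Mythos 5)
Your argument is correct and is exactly the standard one behind the cited result \cite[Lemma~2.5]{MNS11}, which this paper invokes without reproducing a proof: a symmetric extension $P_i$ of the essentially self-adjoint operator $Q$ satisfies $P_i\subseteq P_i^*\subseteq Q^*=\overline Q$, and the equality $\mathcal D(P_1)=\mathcal D(P_2)$ then forces $P_1=P_2$. No gaps.
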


\begin{proposition} \label{smooth-preserve}
Let $(G,\g)$ be a Lie supergroup and
$\big(\pi,\cH,\cB,\rho^\cB\big)$
be a pre-representation of $(G,\g)$. 
Then the following assertions hold.
\begin{description}
  \item[\rm(i)] 
$\tilde\rho^\cB(x)\cD^\infty \subeq \cD^\infty$ for every $x\in\g^\C$. 
  \item[\rm(ii)] 
The map $\tilde\rho^\cB \: \g^\C \to \End(\cD^\infty)$ is a homomorphism 
of Lie superalgebras, i.e., it is linear, and, 
if $x,y\in\g^\C$ are homogeneous, then 
\[
\tilde\rho^\cB([x,y])=\tilde\rho^\cB(x)\tilde\rho^\cB(y)
-(-1)^{p(x)p(y)}\tilde\rho^\cB(y)\tilde\rho^\cB(x). \]
\item[\rm(iii)] For each $v \in \cH^\infty$, the map 
$\omega_v \: \g \to \cH, x \mapsto \tilde\rho^\cB(x)v$ is linear and continuous.
%\item[\rm(iv)] If we endow $\cD^2$ with the coarsest linear topology for which 
%the injection $\cD^2 \into \cH$ and all maps 
%$\oline{\rho^\cB(x)} \: \cD^2 \to \cH$, $x \in \g_\eev$, are continuous, 
%then all linear maps 
%$\oline{\rho^\cB(x)} \: \cD^2 \to \cH$, $x \in \g$, are continuous. 
\end{description} 
\end{proposition}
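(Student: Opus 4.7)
The plan proceeds through the three assertions in order. The conceptual backbone is the identification $\cD^\infty=\cH^\infty$ from Theorem~\ref{thm:2.1} applied to the even Fr\'echet--Lie group~$G$ (which has the Trotter property by hypothesis), together with the structural identities of Lemma~\ref{lem-extension-of-operators}.

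\textbf{Assertion (i).} Since $\tilde\rho^\cB$ is by definition $\C$-linear in terms of the closures of $\rho^\cB$ on homogeneous real elements, it suffices to treat a homogeneous $x\in\g$. If $x\in\g_\eev$, Lemma~\ref{lem-extension-of-operators}(i) gives $\tilde\rho^\cB(x)=\overline{\dd\pi}(x)$, and we invoke the classical fact that for a unitary representation the infinitesimal generator $\overline{\dd\pi}(x)$ maps $\cH^\infty$ into itself (via the relation $\pi(g)\overline{\dd\pi}(x)v=\overline{\dd\pi}(\Ad(g)x)\pi(g)v$ and the smoothness of the adjoint action). If $x\in\g_\ood$, the heart of the matter is to establish the supercommutation identity
\[
\overline{\dd\pi}(y)\,\overline{\rho^\cB(x)}v
=\overline{\rho^\cB(x)}\,\overline{\dd\pi}(y)v+\overline{\rho^\cB([y,x])}v,
\qquad y\in\g_\eev,\ v\in\cD^\infty,
\]
which on $\cB$ follows directly from (PR3) and the super-bracket $[y,x]=yx-xy$, and which one extends to $v\in\cD^\infty$ using that $\cB$ is a core for each of $\overline{\rho^\cB(x)}$ and $\overline{\rho^\cB([y,x])}$ (Lemma~\ref{lem-extension-of-operators}) combined with their closedness. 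An induction on $n$ then shows $\overline{\rho^\cB(x)}v\in\cD^n$ for every $n\in\N$, hence $\overline{\rho^\cB(x)}v\in\cD^\infty=\cH^\infty$.

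\textbf{Assertion (ii).} Linearity on $\g_\eev^\C$ is built into the definition of $\tilde\rho^\cB$ and Theorem~\ref{thm:additive}. Linearity on $\g_\ood^\C$ for fixed $v\in\cD^\infty$ reduces to the already-linear action of $\rho^\cB$ on $\cB$ combined with a core argument: approximating $v$ by $v_n\in\cB$ for which $\rho^\cB(z)v_n\to\overline{\rho^\cB(z)}v$ simultaneously for $z=x,\,y,\,ax+by$ (available from the density of $\cB$ in $\cH^\infty$ in Gårding-type topology), one passes to the limit in $\rho^\cB(ax+by)v_n=a\,\rho^\cB(x)v_n+b\,\rho^\cB(y)v_n$. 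The super-bracket identity $\tilde\rho^\cB([x,y])=\tilde\rho^\cB(x)\tilde\rho^\cB(y)-(-1)^{p(x)p(y)}\tilde\rho^\cB(y)\tilde\rho^\cB(x)$ holds on $\cB$ by (PR3); by part~(i) both sides are defined on $\cD^\infty$, and they agree with their restrictions to the core $\cB$ by closure. The polarization of Lemma~\ref{lem-extension-of-operators}(ii), namely $\overline{\rho^\cB(x)}\overline{\rho^\cB(y)}+\overline{\rho^\cB(y)}\overline{\rho^\cB(x)}=\overline{\dd\pi}([x,y])$ for $x,y\in\g_\ood$, provides the compatibility needed to transfer the anticommutator relation from $\cB$ to $\cD^\infty$.

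\textbf{Assertion (iii).} Linearity of $\omega_v$ is a direct consequence of (ii). For continuity on $\g_\eev$, the inclusion $\cH^\infty\subseteq\cH^1$ and Lemma~\ref{lem:a.11c} show that $x\mapsto\overline{\dd\pi}(x)v$ is continuous. For continuity on $\g_\ood$, the key identity is
\[
\|\overline{\rho^\cB(x)}v\|^{2}
=\tfrac{1}{2i}\,\la v,\overline{\dd\pi}([x,x])v\ra,
\]
derived from $\overline{\rho^\cB(x)}^{*}=-i\,\overline{\rho^\cB(x)}$ (a consequence of the essential self-adjointness in Lemma~\ref{lem-extension-of-operators}(ii)) together with $\overline{\rho^\cB(x)}^{2}=\tfrac{1}{2}\overline{\dd\pi}([x,x])$. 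Since $x\mapsto[x,x]$ is continuous from $\g_\ood$ to $\g_\eev$ and $y\mapsto\overline{\dd\pi}(y)v$ is continuous on $\g_\eev$, the right-hand side is continuous in $x$ near~$0$; by linearity this gives continuity of $\omega_v$ on $\g_\ood$. Continuity on the topological direct sum $\g=\g_\eev\oplus\g_\ood$ follows.

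\textbf{Main obstacle.} The principal difficulty is the odd case of part (i): while the supercommutation formula is tautological on the algebraic domain $\cB$, its extension to all of $\cD^\infty$ demands careful manipulation of closures, since the three operators $\overline{\dd\pi}(y)$, $\overline{\rho^\cB(x)}$, and $\overline{\rho^\cB([y,x])}$ have different domains whose intersections under composition must be controlled inductively. This is precisely the step for which the identification $\cD^\infty=\cH^\infty$ (Theorem~\ref{thm:2.1}) is indispensable, since it replaces the slippery analytic domain $\cD^\infty$ with the geometrically transparent $\cH^\infty$.
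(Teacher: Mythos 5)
Your proposal rests on two devices that are not available under the stated hypotheses and do not do the work you ask of them. First, the ``conceptual backbone'' $\cD^\infty=\cH^\infty$ from Theorem~\ref{thm:2.1} requires $G$ to be a Fr\'echet--Lie group with the Trotter property; Proposition~\ref{smooth-preserve} is stated for an arbitrary Lie supergroup and its proof in the paper never uses these assumptions, working directly with the spaces $\cD^n$ defined by domains of products of the operators $\oline{\dd\pi}(x_i)$. (In particular, for even $x$ the invariance $\tilde\rho^\cB(x)\cD^\infty\subeq\cD^\infty$ is immediate from the definition of $\cD^{n+1}$; your appeal to the classical invariance of $\cH^\infty$ proves the wrong statement unless the two spaces are identified, which you cannot do here.) Second, and more seriously, your mechanism for the odd case of (i) and for (ii) --- extending identities from $\cB$ to $\cD^\infty$ ``using that $\cB$ is a core \dots combined with closedness,'' resp.\ choosing $v_n\in\cB$ with $\rho^\cB(z)v_n\to\oline{\rho^\cB(z)}v$ \emph{simultaneously} for several $z$ --- is exactly the step that fails: being a core for each operator separately gives no approximating sequence converging in the joint graph norms, and the commutation identity you want to extend is not even well posed before one knows $\oline{\rho^\cB(x)}v\in\cD(\oline{\dd\pi}(y))$, which is the assertion to be proved. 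You acknowledge this as the ``main obstacle'' but supply no idea that overcomes it.

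The paper's proof closes precisely these gaps with two different tools. For (i) it argues weakly: for $v\in\cD^{n+1}$, $x\in\g_\ood$ and $w\in\cB$ one moves operators across the inner product using (PR4/5) and the bracket relation on $\cB$ to show that $w\mapsto\la\oline{\rho^\cB(x)}v,\oline{\dd\pi}(y)w\ra$ is continuous; since $\rho^\cB(y)=\oline{\dd\pi}(y)\res_\cB$ is essentially skew-adjoint, $(\oline{\dd\pi}(y)\res_\cB)^*=-\oline{\dd\pi}(y)$, whence $\oline{\rho^\cB(x)}v\in\cD(\oline{\dd\pi}(y))$, and an induction on $n$ gives $\oline{\rho^\cB(x)}\cD^{n+1}\subeq\cD^n$. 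For (ii) the linearity and the superbracket relation are transferred from $\cB$ to $\cD^\infty$ not by approximation but by Lemma~\ref{lem-two-oper}: both sides are symmetric operators (after multiplication by $e^{-\frac{\pi i}{4}}$ in the appropriate cases) with the common domain $\cD^\infty$, they agree on $\cB$, and the restriction to $\cB$ is essentially self-adjoint by Lemma~\ref{lem-extension-of-operators}(ii), so they coincide. Your part (iii) is essentially the paper's argument (the Cauchy--Schwarz estimate $\|\oline{\rho^\cB(y)}v\|\le\frac{1}{\sqrt 2}\|v\|^{1/2}\|\oline{\dd\pi}([y,y])v\|^{1/2}$ reducing odd continuity at $0$ to even continuity), but as it stands the proposal does not constitute a proof of (i) and (ii).
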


\begin{proof} (i) By Lemma \ref{lem-extension-of-operators}(i) and the definition of $\tilde\rho^\cB$  
it suffices to prove the statement when $x\in \g_\ood$. 
Therefore it is enough to prove that  
\begin{equation}
\label{inclusion-eq}
\overline{\rho^\cB(x)}v\in\cD^n
\mbox{ for every } x\in\g_\ood, v\in\cD^{n+1}, n \in \N. \end{equation}
Let $y\in\g_\eev$. For every $w\in\cB$ and $v \in \cD^2$, 
using Lemma~\ref{lem-extension-of-operators}, 
we can write
\begin{align*}
&\langle \overline{\rho^\cB(x)}v,\oline{\dd\pi}(y)w\rangle=
\langle\overline{\rho^\cB(x)}v,\rho^\cB(y)w\rangle 
=e^\frac{\pi i}{2}\langle v,\overline{\rho^\cB(x)}\rho^\cB(y)w\rangle\\
&=e^\frac{\pi i}{2}\langle v,\rho^\cB(y)\rho^\cB(x)w+
\rho^\cB([x,y])w\rangle
=e^\frac{\pi i}{2}
\langle v,\rho^\cB(y)\rho^\cB(x)w\rangle+
e^\frac{\pi i}{2}\langle v,\rho^\cB([x,y])w\rangle  \\
&=\langle\overline{\rho^\cB(x)}\oline{\dd\pi}(y)v,w\rangle+
\langle \overline{\rho^\cB([x,y])}v,w\rangle.
\end{align*}
It follows that the complex linear functional
$\cB\to\C, w\mapsto \langle \overline{\rho^\cB(x)}v,\oline{\dd\pi}(y)w\rangle$ 
is continuous, i.e., 
$\overline{\rho^\cB(x)}v\in\cD\big((\oline{\dd\pi}(y)\big|_\cB)^*\big)$. 
Since $\oline{\dd\pi}(y)\big|_\cB=\rho^\cB(y)$ is essentially skew-adjoint,
from Lemma
\ref{lem-extension-of-operators}(i) it follows that $(\oline{\dd\pi}(y)\big|_\cB)^*=-\oline{\dd\pi}(y)$, i.e., 
$\overline{\rho^\cB(x)}v\in\cD(\oline{\dd\pi}(y))$. 
This proves \eqref{inclusion-eq} for $n=1$.

For $n>1$ the proof of \eqref{inclusion-eq} can be completed by induction. 
Let $x_1,\ldots,x_n\in\g_\eev$ and 
$v\in\mathscr \cD^{n+1}$. 
Using the induction hypothesis, for every $w\in\cB$ we can write
\begin{align*}
\langle&\oline{\dd\pi}(x_{n-1})\cdots\oline{\dd\pi}(x_1)\overline{\rho^\cB(x)}v,\oline{\dd\pi}(x_n)w\rangle
=e^{(n-\frac{1}{2})\pi i}
\langle v,\rho^\cB(x)\rho^\cB(x_1)\cdots\rho^\cB(x_n)w\rangle\\
&=e^{(n-\frac{1}{2})\pi i}\langle v,
\rho^\cB([x,x_1])\rho^\cB(x_2)\cdots\rho^\cB(x_n)w+
\rho^\cB(x_1)\rho^\cB(x)\rho^\cB(x_2)\cdots\rho^\cB(x_n)w \rangle\\
&=\langle\oline{\dd\pi}(x_n)\cdots\oline{\dd\pi}(x_2)\overline{\rho^\cB([x,x_1])}v,w\rangle
+\langle\oline{\dd\pi}(x_n)\cdots\oline{\dd\pi}(x_2)\overline{\rho^\cB(x)}\oline{\dd\pi}(x_1)v,w\rangle.
\end{align*}
An argument similar to the case $n=1$ proves that 
\[
\oline{\dd\pi}(x_{n-1})\cdots\oline{\dd\pi}(x_1)\overline{\rho^\cB(x)}v\in\cD(\oline{\dd\pi}(x_n)).
\]
Consequently, $\overline{\rho^\cB(x)}v\in\cD^n$.

(ii) (cf.\ \cite[Prop.~4.6]{MNS11})  
First we show that $\tilde\rho^\cB$ is linear. 
By Lemma \ref{lem-extension-of-operators}(i) and the definition of $\tilde\rho^\cB$, 
it is enough to prove that, for every $v\in\cD^\infty$, the map
\[ \g_\oo \to\cH,\quad   x\mapsto\overline{\rho^\mathscr B(x)}v\]
is $\R$-linear. 
Let $x\in\g_\oo$ and $a\in \R$. Then the equality
\begin{equation}
\label{eqn-scalara}
\overline{\rho^\cB(ax)}v=a\overline{\rho^\cB(x)}v
\end{equation}
holds for every $v\in\cB$, and therefore, by Lemma \ref{lem-two-oper},
 it also holds for every $v\in\cD^\infty$.
A similar reasoning proves that, if $x,y\in \g_\oo$, then, for every $v\in\cD^\infty$,  we have
\begin{equation}
\label{eqn-sumxy}
\overline{\rho^\cB(x+y)}v=\overline{\rho^\cB(x)}v+\overline{\rho^\cB(y)}v.
\end{equation} 

It suffices to prove the commutation relation for $x,y\in\g$. 
Depending on the parities of $x$ an $y$, there are four cases to consider, 
but the argument for all of them is essentially the same.
For example, if $x\in\g_\eev$ and $y\in\g_\ood$, then we define two operators $P_1$ and $P_2$ with domains
$\mathcal D(P_1)=\mathcal D(P_2)=\cD^\infty$  as follows. For $v\in\cD^\infty$  we set
\[
P_1v=e^{-\frac{\pi i}{4}}\overline{\rho^\cB([x,y])}v\ \text{ and }\ 
P_2v=e^{-\frac{\pi i}{4}}\Big(\overline{\rho^\cB(x)}\,\overline{\rho^\cB(y)}v-
\overline{\rho^\cB(y)}\,\overline{\rho^\cB(x)}v\Big).
\]
Then $P_1$ and $P_2$ are both symmetric, $P_1\big|_{\cB}=P_2\big|_{\cB}$, and by
Lemma \ref{lem-extension-of-operators}(ii),  
the operator $P_1\big|_{\cB}$ is 
essentially self-adjoint. Lemma \ref{lem-two-oper} implies that $P_1=P_2$.

(iii) The linearity of $\omega_v$ follows from (ii). The continuity 
of $\omega_v$ on $\g_{\eev}$ follows from the definition of a smooth vector. 
Therefore it remains to show that $\omega_v\res_{\g_\ood}$ is continuous in $0$. 
This can be reduced to continuity of $\omega_v$ on $\g_{\eev}$ using 
the estimate 
\begin{align}
\label{eq:estim}
 \|\oline{\rho^\cB(y)}v\|&=
 \langle
\oline{\rho^\cB(y)}v,
\oline{\rho^\cB(y)}v 
\rangle^\frac{1}{2}\\
&=
| \langle
v,
\oline{\rho^\cB(y)}^2v 
\rangle|^\frac{1}{2}
 \leq \frac{1}{\sqrt 2} \|v\|^{1/2} \|\oline{\dd\pi}([y,y])v\|^{1/2}, 
\quad y \in \g_\ood, v \in \cD^2 
\notag\end{align} 
which is a consequence of 
the Cauchy--Schwarz inequality.
%(iv) follows also from the preceding estimate. 
\end{proof}

\begin{lem} 
\mlabel{rem:1.9}
Let $(G,\g)$ be a Fr\'echet--Lie 
supergroup such that $G$ has the Trotter property.
Let $(\pi,\cH,\cB,\rho^\cB)$ be a pre-representation of $G$ and $v\in\cH^\infty$. 
For every $x\in\g$  
the map 
\begin{equation}
\label{secondmapi}
G\times\g_\eev \to \cH, \quad (g,y) \mapsto \oline{\rho^\cB(x)}\oline{\dd\pi}(y)\pi(g)v 
\end{equation}
 is continuous.

\end{lem}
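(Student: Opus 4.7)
\emph{Plan.} Write $x=x_\eev+x_\ood$; by $\R$-linearity of $\tilde\rho^\cB$ on $\g$ (Proposition~\ref{smooth-preserve}(ii)) it suffices to treat the two homogeneous cases. Since $(\pi,\cH)$ is smooth and $G$ has the Trotter property, Theorem~\ref{thm:2.1} yields $\cH^\infty=\cD^\infty$, so $\pi(g)v\in\cH^\infty$ for every $g\in G$ and the expression in the statement is well defined. The key preliminary identity is the conjugation relation
\[
\oline{\dd\pi}(z)\pi(g)w=\pi(g)\,\oline{\dd\pi}(\Ad(g)^{-1}z)\,w\qquad(z\in\g_\eev,\ g\in G,\ w\in\cH^\infty),
\]
obtained by differentiating $\pi(\exp_G(tz)g)w=\pi(g\exp_G(t\Ad(g)^{-1}z))w$ at $t=0$; both sides are differentiable because $w$ and $\pi(g)w$ lie in $\cD^1$.

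For $x\in\g_\eev$ I would apply the conjugation twice to get
\[
\oline{\dd\pi}(x)\oline{\dd\pi}(y)\pi(g)v=\pi(g)\,\oline{\dd\pi}(\Ad(g)^{-1}x)\,\oline{\dd\pi}(\Ad(g)^{-1}y)\,v.
\]
Because $G$ is Fr\'echet with the Trotter property and $\cH$ is metrizable, Lemma~\ref{lem:contmulti} turns $(z_1,z_2)\mapsto\oline{\dd\pi}(z_1)\oline{\dd\pi}(z_2)v$ into a continuous bilinear map $\g_\eev\times\g_\eev\to\cH$. Composing with the continuous adjoint action $G\times\g_\eev\to\g_\eev$ and the jointly continuous action $\pi\: G\times\cH\to\cH$ then gives the claimed continuity.

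For $x\in\g_\ood$ I would set $\zeta_{g,y}:=\oline{\dd\pi}(y)\pi(g)v=\pi(g)\oline{\dd\pi}(\Ad(g)^{-1}y)v\in\cH^\infty$, which is continuous in $(g,y)$ by the same composition argument. The even case above, applied with $[x,x]\in\g_\eev$ in place of $x$, shows that $(g,y)\mapsto\oline{\dd\pi}([x,x])\zeta_{g,y}$ is also continuous. I would then invoke the Cauchy--Schwarz bound extracted from the proof of Proposition~\ref{smooth-preserve}(iii),
\[
\|\oline{\rho^\cB(x)}\xi\|\le\tfrac{1}{\sqrt 2}\,\|\xi\|^{1/2}\,\|\oline{\dd\pi}([x,x])\xi\|^{1/2}\qquad(\xi\in\cD^2),
\]
applied to $\xi=\zeta_{g,y}-\zeta_{g_0,y_0}\in\cH^\infty\subseteq\cD^2$. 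Since both $\|\xi\|\to 0$ and $\|\oline{\dd\pi}([x,x])\xi\|\to 0$ as $(g,y)\to(g_0,y_0)$, continuity at an arbitrary point $(g_0,y_0)$ follows.

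The main technical obstacle is the odd case: $\oline{\rho^\cB(x)}$ is not the infinitesimal generator of any one-parameter subgroup of $G$, so no argument directly involving orbit maps of $\pi$ is available for it. The Cauchy--Schwarz estimate is precisely the device that converts the continuity question into one about the even-type quantity $\oline{\dd\pi}([x,x])\xi$, which is in turn handled by the bilinear continuity in Lemma~\ref{lem:contmulti}. The only additional bookkeeping is checking that all differences of vectors on which the estimate is invoked actually lie in $\cD^2$, which is immediate from $\cH^\infty\subseteq\cD^2$ and the fact that $\cH^\infty$ is stable under $\pi(g)$ and $\oline{\dd\pi}(y)$ for $y\in\g_\eev$.
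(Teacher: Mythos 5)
Your proof is correct and follows essentially the same route as the paper: the even case via the conjugation identity $\oline{\dd\pi}(x)\oline{\dd\pi}(y)\pi(g)v=\pi(g)\oline{\dd\pi}(\Ad(g)^{-1}x)\oline{\dd\pi}(\Ad(g)^{-1}y)v$ together with continuity of the two-fold derived maps (the paper cites Remark~\ref{rem:cn-vec}(c), which encapsulates your Lemma~\ref{lem:contmulti} plus joint continuity argument), and the odd case via the Cauchy--Schwarz estimate \eqref{eq:estim} applied to differences $\oline{\dd\pi}(y_1)\pi(g_1)v-\oline{\dd\pi}(y_2)\pi(g_2)v$, reducing to the even case with $[x,x]$. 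No gaps beyond those the paper itself tolerates (e.g., the reduction to homogeneous $x$).
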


\begin{prf}

Since $G$ has the Trotter property,  
by
Theorem \ref{thm:2.1} and Proposition \ref{smooth-preserve}(ii) we have $\tilde\rho^\cB(z)\cH^\infty\subseteq \cH^\infty$ for every $z\in\g^\C$.

We can assume $x$ is homogeneous. First assume $x\in\g_\eev$. By Remark \ref{rem:cn-vec}(c),
the maps
\[
G\times\g_\eev\to\cH\ ,\ 
(g,x_1)\mapsto \pi(g)\oline{\dd\pi}(x_1)v
\]
and
\[
G\times\g_\eev\times \g_\eev\to\cH\ ,\ 
(g,x_1,x_2)\mapsto \pi(g)\oline{\dd\pi}(x_1)\oline{\dd\pi}(x_2)v
\]
are continuous. Therefore continuity of \eqref{secondmapi}
follows from
\[
\oline{\dd\pi}(x)\oline{\dd\pi}(y)
\pi(g)v=\pi(g)
\oline{\dd\pi}(\Ad(g^{-1})x)
\oline{\dd\pi}(\Ad(g^{-1})y)v
\]
and smoothness of the map $G\to\g_\eev\ ,\ g\mapsto\Ad(g)z$ for every $z\in\g_\eev$.

Next assume $x\in\g_\ood$. From the preceding argument it follows that the maps
\begin{equation}\mlabel{sevvom:eq}
G\times \g_\eev\to\cH\ ,\ (g,x_1)\mapsto\oline{\dd\pi}([x,x])\oline{\dd\pi}(x_1)
\pi(g)v
\end{equation}
and 
\begin{equation}
\mlabel{sevvom:eqg}
G\times \g_\eev\to\cH\ ,\ (g,x_1)\mapsto\oline{\dd\pi}(x_1)\pi(g)v
\end{equation}
are continuous. From \eqref{eq:estim} it follows that
\begin{align*}
\|\oline{\rho^\cB(x)}&\big(\oline{\dd\pi}(y_1)\pi(g_1)v-\oline{\dd\pi}(y_2)\pi(g_2)v\big)\|\\
&\leq
\frac{1}{\sqrt{2}}\|\oline{\dd\pi}(y_1)\pi(g_1)v-\oline{\dd\pi}(y_2)\pi(g_2)v)\|^\frac{1}{2}
\cdot
\|\oline{\dd\pi}([x,x])
(\oline{\dd\pi}(y_1)\pi(g_1)v-\oline{\dd\pi}(y_2)\pi(g_2)v)
\|^\frac{1}{2}
\end{align*}
for $g_1,g_2\in G$ and $y_1,y_2\in\g_\eev$. Consequently, continuity of \eqref{secondmapi}
follows from continuity of \eqref{sevvom:eq}
and \eqref{sevvom:eqg}.
\end{prf}

\begin{rem}
\label{rem:6.11}
Let $\eps>0$ and $\alpha:(-\eps,\eps)\to G$ be a smooth curve. The right logarithmic derivative of $\alpha$ is the smooth curve
$\rlg(\alpha):(-\eps,\eps)\to\g$ 
defined by
\[
\rlg(\alpha)_s:=\dd\varrho_{\alpha(s)^{-1}}(\alpha(s))(\alpha'(s))
\]
where $\varrho_g:G\to G$ is the right multiplication by $g\in G$.
The product rule for $\rlg$ is given by
\[
\rlg(\alpha\beta)_s=\rlg(\alpha)_s+\Ad(\alpha(s))\rlg(\beta)_s.
\]

If $(\pi,\cH)$  is a unitary representation of $G$ and 
$v\in\cH^\infty$ then $\pi(\alpha(s))v\in\cH^\infty$ and therefore
\begin{align*}
\lim_{h\to 0}\frac{1}{h}&\big(\pi(\alpha(s+h))v-\pi(\alpha(s))v\big)\\
&=
\lim_{h\to 0}\frac{1}{h}\big(\pi(\alpha(s+h)\alpha(s)^{-1})\pi(\alpha(s))v-\pi(\alpha(s))v\big)
=
\oline{\dd\pi}(\rlg(\alpha)_s)\pi(\alpha(s))v.
\end{align*}
Similarly, we obtain with the left logarithmic derivative $\delta(\alpha)$: 
\begin{align*}
\lim_{h\to 0}\frac{1}{h}&\big(\pi(\alpha(s+h))v-\pi(\alpha(s))v\big)\\
&= \pi(\alpha(s))
\lim_{h\to 0}\frac{1}{h}\big(\pi(\alpha(s)^{-1}\alpha(s+h))v-v\big)
=\pi(s) \oline{\dd\pi}(\delta(\alpha)_s)v.
\end{align*}

\end{rem}

Our next goal is to prove 
Proposition \ref{proposition-conjugacy} below. The proof of this proposition is based on a variation of
a subtle lemma from \cite[Chap.~3]{JM84} (see \cite{Me11} and \cite{MNS11} as well).

\begin{lem}

\mlabel{lem-jorgenson}
Let $(\pi,\cH)$ be a smooth unitary representation of a Lie group $G$. 
Let $B$ be a closable operator on $\cH$ such that $\cH^\infty\subseteq\cD(B)$ and $B\cH^\infty\subseteq \cH^\infty$.
Let $\alpha:(-\eps,\eps)\to G$ be a smooth curve. Let $v\in\cH^\infty$ and
set 
\[
\eta:(-\eps,\eps)\to \cH\ ,\ \eta(s):=B\pi(\alpha(s))v.
\]
Assume that the map
\[
\beta:(-\eps,\eps)\to \cH\ ,\ 
\beta(s):=B\oline{\dd\pi}(\rlg(\alpha)_s)\pi(\alpha(s))v
\]
is continuous. Then $\eta$ is differentiable and $\displaystyle\frac{d}{ds}\eta(s)=\beta(s)$ for every  $s\in(-\eps,\eps)$.
\end{lem}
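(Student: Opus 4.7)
The natural strategy is to bypass the unboundedness of $B$ by testing against vectors $w\in\cD(B^*)$ and moving $B$ to the other side of the pairing. This is valid because the orbit curve $s\mapsto\pi(\alpha(s))v$ takes values in $\cH^\infty\subeq\cD(B)$, where the adjoint identity $\la Bu,w\ra=\la u,B^*w\ra$ applies. Since $B$ is closable, $B^*$ is densely defined, so $\cD(B^*)$ is dense in $\cH$ and one can recover strong statements about $\eta$ from weak ones.

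First I would fix $w\in\cD(B^*)$ and consider $f_w(s):=\la\eta(s),w\ra=\la\pi(\alpha(s))v, B^*w\ra$, where the second equality uses $\pi(\alpha(s))v\in\cH^\infty\subeq\cD(B)$. By Remark~\ref{rem:6.11}, the curve $s\mapsto\pi(\alpha(s))v$ is differentiable in $\cH$ with derivative $\oline{\dd\pi}(\rlg(\alpha)_s)\pi(\alpha(s))v$, so
\[
\frac{d}{ds}f_w(s)=\la\oline{\dd\pi}(\rlg(\alpha)_s)\pi(\alpha(s))v,\,B^*w\ra.
\]
Using the standard fact that $\cH^\infty$ is stable under $\oline{\dd\pi}(x)$ for $x\in\g_\eev$, the vector $\oline{\dd\pi}(\rlg(\alpha)_s)\pi(\alpha(s))v$ lies in $\cH^\infty\subeq\cD(B)$, so I can move $B^*$ back to $B$ and conclude $\tfrac{d}{ds}f_w(s)=\la\beta(s),w\ra$.

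Next, since $\beta$ is assumed continuous, its $\cH$-valued Riemann integral $\int_0^t\beta(s)\,ds$ exists. Pairing with $w$ and using the scalar fundamental theorem of calculus (combined with continuity of $s\mapsto\la\beta(s),w\ra$), I obtain
\[
\la\eta(t)-\eta(0),w\ra=\int_0^t\la\beta(s),w\ra\,ds=\Big\la\int_0^t\beta(s)\,ds,\,w\Big\ra
\]
for every $w\in\cD(B^*)$. Density of $\cD(B^*)$ in $\cH$ forces $\eta(t)-\eta(0)=\int_0^t\beta(s)\,ds$, and continuity of $\beta$ then yields that $\eta$ is differentiable with $\eta'(s)=\beta(s)$.

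The main obstacle is really the first bullet of the middle paragraph: to justify the identity $\la\oline{\dd\pi}(\rlg(\alpha)_s)\pi(\alpha(s))v,B^*w\ra=\la\beta(s),w\ra$ one must know that the vector being tested lies in $\cD(B)$, which in turn requires $\oline{\dd\pi}(\rlg(\alpha)_s)\pi(\alpha(s))v\in\cH^\infty$. This stability of the smooth vectors under $\oline{\dd\pi}(x)$ for $x\in\g_\eev$ is a standard consequence of the chain rule applied to orbit maps of smooth vectors, and once it is in hand the remaining weak-to-strong integration argument is routine.
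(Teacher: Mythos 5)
Your argument is correct, but it takes a genuinely different route from the paper. You work weakly: you pair $\eta$ against vectors $w\in\cD(B^*)$, use the differentiability of $s\mapsto\pi(\alpha(s))v$ in $\cH$ together with the invariance of $\cH^\infty$ under $\oline{\dd\pi}(x)$ to identify $\tfrac{d}{ds}\la\eta(s),w\ra=\la\beta(s),w\ra$, and then upgrade the scalar fundamental theorem of calculus to the vector identity $\eta(t)-\eta(0)=\int_0^t\beta(s)\,ds$ via density of $\cD(B^*)$ — which is exactly where closability (together with dense-definedness of $B$, i.e.\ density of $\cH^\infty$ for a smooth representation) enters. The paper instead argues strongly: it introduces the Banach space $\cH_B$, the completion of $\cH^\infty$ in the graph norm $\|w\|+\|Bw\|$, observes that continuity of $\beta$ makes $\gamma'(s)=\oline{\dd\pi}(\rlg(\alpha)_s)\pi(\alpha(s))v$ continuous as an $\cH_B$-valued curve, so that the identity $\gamma(s)-\gamma(0)=\int_0^s\gamma'(t)\,dt$ holds in $\cH_B$, and then applies the continuous map $B\:\cH_B\to\cH$ to obtain the same integral formula for $\eta$; closability is used there to identify $\cH_B$ with a subspace of $\cH$. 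Your version buys a more elementary argument that avoids the auxiliary Banach space (at the cost of the small functional-analytic input that $B^*$ is densely defined), while the paper's version stays entirely in the strong topology and makes the role of the graph norm explicit. Both proofs rely on the same two geometric facts — Remark~\ref{rem:6.11} for the derivative of the orbit curve and the stability of $\cH^\infty$ under the derived operators — and on the continuity hypothesis for $\beta$, so the step you flag as the ``main obstacle'' is indeed the same ingredient the paper uses.
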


\begin{prf} Set $\gamma(s):=\pi(\alpha(s))v$. Since $v$ is a smooth vector 
and $\alpha$ a smooth curve, $\gamma$ is also smooth. 
Remark \ref{rem:6.11} implies that 
$\gamma'(s)=\oline{\dd\pi}(\rlg(\alpha)_s)\pi(\alpha(s))v$, 
and therefore 
\begin{equation}
\mlabel{eq:intinhb}
\pi(\alpha(s))v-\pi(\alpha(0))v=\gamma(s)-\gamma(0)=\int_0^s
\big(\oline{\dd\pi}(\rlg(\alpha)_s)\pi(\alpha(s))v\big) dt.
\end{equation}
Let $\cH_B$ denote the completion of $\cH^\infty$ with respect to the norm 
$\|w\|_B:=\|w\|+\|Bw\|$. Then $\cH_B$ is a Banach space and $B:\cH_B\to \cH$ is a continuous linear map. Continuity of $\beta$ implies that the map $\gamma':(-\eps,\eps)\to\cH_B$ is continuous, and therefore 
\eqref{eq:intinhb} holds in $\cH_B$. Since $B:\cH\to\cH_B$ is linear and continuous, we have
\begin{equation*}
\eta(s)-\eta(0)=B\pi(\alpha(s))v-B\pi(\alpha(0))v=B(\gamma(s)-\gamma(0))=\int_0^s
\left(B\oline{\dd\pi}(\rlg(\alpha)_s)\pi(\alpha(s))v\right) dt.
\end{equation*}
The last equality immediately implies that
$\displaystyle\frac{d}{ds}\eta(s)=\beta(s)$ for every $s\in(-\eps,\eps)$.
\end{prf}

\begin{proposition}
\label{proposition-conjugacy}
Let $(G,\g)$ be a Fr\'echet--Lie supergroup. Assume that $G$ is connected and has the Trotter property. Let
$\big(\pi,\cH,\cB,\rho^\cB\big)$
be a pre-representation of $(G,\g)$. Then for every $g\in G$, 
every $x\in\g_\ood^\C$, and every 
$v\in\cH^\infty$ we 
have 
\begin{equation}
\label{eqn-conjugacy}
\pi(g)\tilde\rho^\cB(x)\pi(g)^{-1}v=\tilde\rho^\cB(\Ad(g)x)v.
\end{equation}
\end{proposition}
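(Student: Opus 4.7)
My plan is as follows. Since $G$ is a connected Fr\'echet--Lie group, fix a smooth curve $\alpha:[0,1]\to G$ with $\alpha(0)=\1$ and $\alpha(1)=g$, and let $y_s := \delta(\alpha)_s\in\g_\eev$ denote its left logarithmic derivative. For fixed $v\in\cH^\infty$ introduce the auxiliary function
\[
\Psi(s,z)\;:=\;\pi(\alpha(s))\,\tilde\rho^\cB(z)\,\pi(\alpha(s))^{-1}v\;-\;\tilde\rho^\cB(\Ad(\alpha(s))z)v,\qquad s\in[0,1],\ z\in\g^\C.
\]
By Proposition~\ref{smooth-preserve}(ii)--(iii), $\Psi(s,\cdot):\g^\C\to\cH$ is $\C$-linear, and clearly $\Psi(0,z)=0$. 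The desired identity \eqref{eqn-conjugacy} amounts to $\Psi(1,x)=0$.

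The core step is the first-order identity
\begin{equation}\label{eq:planode}
\tfrac{\partial}{\partial s}\Psi(s,z)\;=\;\Psi(s,[y_s,z])\qquad\text{for every } z\in\g^\C.
\end{equation}
The subtracted term $\tilde\rho^\cB(\Ad(\alpha(s))z)v=\omega_v(\Ad(\alpha(s))z)$ is differentiable via the smoothness of the adjoint action combined with the continuity of $\omega_v$ from Proposition~\ref{smooth-preserve}(iii), giving derivative $\tilde\rho^\cB(\Ad(\alpha(s))[y_s,z])v$. For the conjugation term I write $\pi(\alpha(s))\tilde\rho^\cB(z)\pi(\alpha(s))^{-1}v=\pi(\alpha(s))\phi(s)$ with $\phi(s):=\tilde\rho^\cB(z)\pi(\alpha(s))^{-1}v$, noting that $\phi(s)\in\cH^\infty$ by Proposition~\ref{smooth-preserve}(i) combined with Theorem~\ref{thm:2.1}. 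Lemma~\ref{lem-jorgenson}, applied to $B=\tilde\rho^\cB(z)$ and the smooth curve $s\mapsto\alpha(s)^{-1}$, then yields $\phi'(s)=-\tilde\rho^\cB(z)\tilde\rho^\cB(y_s)\pi(\alpha(s))^{-1}v$: its three hypotheses---$B$ closable, $B\cH^\infty\subseteq\cH^\infty$, and continuity of the auxiliary map $s\mapsto\tilde\rho^\cB(z)\,\oline{\dd\pi}(y_s)\pi(\alpha(s))^{-1}v$---are supplied by Lemma~\ref{lem-extension-of-operators}, by Proposition~\ref{smooth-preserve}(i) together with Theorem~\ref{thm:2.1}, and by Lemma~\ref{rem:1.9}, respectively. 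Since $\phi(s)\in\cH^\infty$, Remark~\ref{rem:6.11} provides the product rule
\[
\tfrac{d}{ds}\bigl[\pi(\alpha(s))\phi(s)\bigr]\;=\;\pi(\alpha(s))\bigl[\tilde\rho^\cB(y_s),\tilde\rho^\cB(z)\bigr]\pi(\alpha(s))^{-1}v,
\]
and because $y_s\in\g_\eev$ the super-bracket collapses to the ordinary commutator, which equals $\tilde\rho^\cB([y_s,z])$ by Proposition~\ref{smooth-preserve}(ii). This proves \eqref{eq:planode}.

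To integrate \eqref{eq:planode} I use the substitution $\tilde\Psi(s,x):=\Psi(s,B(s)x)$ with $B(s):=\Ad(\alpha(s))^{-1}$. The relations $B(0)=\id$ and $B'(s)x=-\ad(y_s)B(s)x$, together with $\C$-linearity of $\Psi(s,\cdot)$, yield via the chain rule
\[
\tfrac{d}{ds}\tilde\Psi(s,x)\;=\;\Psi(s,[y_s,B(s)x])\;+\;\Psi(s,-[y_s,B(s)x])\;=\;0.
\]
Since $\tilde\Psi(0,x)=\Psi(0,x)=0$ we conclude $\tilde\Psi\equiv 0$, and the invertibility of $B(s)$ forces $\Psi(s,x)=0$ on all of $[0,1]\times\g^\C$; evaluating at $s=1$ completes the proof.

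The main obstacle is the rigorous derivation of \eqref{eq:planode}, since the operators $\tilde\rho^\cB(z)$ are unbounded and must be differentiated through the parameterised family $\pi(\alpha(s))^{-1}v$; this is exactly the configuration Lemma~\ref{lem-jorgenson} was designed to handle, with its continuity hypothesis being supplied by Lemma~\ref{rem:1.9}. A secondary technical point is the validity of the chain rule for $\tilde\Psi$, which needs joint continuity of $\Psi(s,z)$ together with equicontinuity of $\{\Psi(s,\cdot):s\in[0,1]\}$ as linear maps $\g^\C\to\cH$; the first ingredient follows from an application of the Cauchy--Schwarz style bound underlying Proposition~\ref{smooth-preserve}(iii), and the second from the uniform boundedness principle on the Fr\'echet space $\g^\C$.
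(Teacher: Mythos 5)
Your argument is correct and is essentially the paper's own proof in different packaging: the paper likewise joins $\1$ to $g$ by a smooth curve $\alpha$ and shows that $s\mapsto K(s)\gamma(s)$, with $K(s)y=\pi(g\alpha(s)^{-1})\,\tilde\rho^\cB(y)\,\pi(g\alpha(s)^{-1})^{-1}v$ and $\gamma(s)=\Ad(\alpha(s))x$, has vanishing derivative, using exactly your ingredients (Lemma~\ref{lem-jorgenson} with the continuity hypothesis supplied by Lemma~\ref{rem:1.9}, Remark~\ref{rem:6.11}, Proposition~\ref{smooth-preserve} together with Theorem~\ref{thm:2.1}, and Banach--Steinhaus equicontinuity on the Fr\'echet space $\g$). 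Your two-variable $\Psi$ satisfying $\partial_s\Psi(s,z)=\Psi(s,[y_s,z])$, integrated via the substitution $z=\Ad(\alpha(s))^{-1}x$, is the same computation with the transport step made explicit.
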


\begin{proof}
By
Theorem \ref{thm:2.1} and Proposition \ref{smooth-preserve}(ii) we have $\tilde\rho^\cB(z)\cH^\infty\subseteq \cH^\infty$ for every $z\in\g^\C$.
Let $\alpha:\R\to G$ be a smooth curve such that $\alpha(0)=\1$ and $\alpha(1)=g$.
Set $A(s):=\pi(g\alpha(s)^{-1})$ and $\gamma(s):=\Ad(\alpha(s))x$ for  $s\in\R$.
For $v\in\cH^\infty$ and $s\in\R$ define 
\[
K(s):\g_\ood^\C\to \cH\,,\, K(s)x=A(s)
\tilde\rho^\cB(x)
A(s)^{-1}v.
\]
The left hand side of \eqref{eqn-conjugacy}  is equal to $K(0)\gamma(0)$ and the right hand side 
of \eqref{eqn-conjugacy} is equal to $K(1)\gamma(1)$. Therefore it suffices to prove that 
the map
$s\mapsto K(s)\gamma(s)$ is constant. To this end, we will prove that 
it is differentiable and that its derivative is identically zero.

By Proposition~\ref{smooth-preserve}(iii), for every $t\in\R$, the linear map 
$K(t)$ is continuous. From Lemma~\ref{rem:1.9} we derive that 
for every $y\in\g^\C_\ood$, the curve 
\[
\R\to\cH\,,\, s\mapsto \tilde\rho^\cB(y)
\oline{\dd\pi}(\rlg(\alpha)_s)A(s)^{-1}v
\] 
is continuous. It follows from Lemma 
\ref{lem-extension-of-operators}(ii) and 
Lemma \ref{lem-jorgenson} that the curve 
\[
\eta_y:\R\mapsto\cH\, ,\, \eta(s)
=\tilde\rho^\cB(y)A(s)^{-1}v
=\tilde\rho^\cB(y)\pi(\alpha(s))\pi(g)^{-1}v
\]
is differentiable, and $\eta'_y(s)=\tilde\rho^\cB(y)
\oline{\dd\pi}(\rlg(\alpha)_s)A(s)^{-1}v=
\tilde\rho^\cB(y)
\tilde\rho^\cB(\rlg(\alpha)_s)A(s)^{-1}v
$. 

Next we show that, for every $y\in\g^\C_\ood$, the map $K^y \: \R\to\cH, s\mapsto K(s)y$ 
is differentiable and we compute its derivative. 
Observe that
\begin{align*}
(K^y)'(s)&=\frac{d}{ds}(A(s)\eta_y(s))=\lim_{h\to 0}\frac{1}{h}
\big(A(s+h)\eta_y(s+h)-A(s)\eta_y(s)\big)
\end{align*}
and 
\begin{align}
\label{eqn-as+h}
\notag
\frac{1}{h}&\big(A(s+h)\eta_y(s+h)-A(s)\eta_y(s)\big)\\
&=A(s+h)\Big(\frac{1}{h}\big(\eta_y(s+h)-\eta_y(s)\big)\Big)+
\frac{1}{h}\big(A(s+h)\eta_y(s)-A(s)\eta_y(s)\big).
\end{align}
Since the map $\R \times \cH \to \cH, (s,v) \mapsto A(s)v$ is continuous, 
when $h\to 0$ we obtain 
\[
A(s+h)\Big(\frac{1}{h}\big(\eta_y(s+h)-\eta_y(s)\big)\Big)\to 
A(s)\eta_y'(s).
\]
Since 
$\eta_y(s)\in\cH^\infty$, from Remark 
\ref{rem:6.11} and the relation
$\delta(\alpha^{-1})_s=-\rlg(\alpha)_s$ 
it follows that, 
as $h\to 0$, the second term in \eqref{eqn-as+h} converges to 
\[  -A(s)\tilde\rho^\cB(\rlg(\alpha)_s)\eta_y(s) = 
 -A(s)\tilde\rho^\cB(\rlg(\alpha)_s)\tilde\rho^\cB(y)A(s)^{-1}v. \]
By Proposition \ref{smooth-preserve}(ii) we conclude that
\begin{equation}
\label{derivofksx}
(K^y)'(s) =A(s)\eta'_y(s)-A(s)\tilde\rho^\cB(\rlg(\alpha)_s)\tilde\rho^\cB(y)A(s)^{-1}v=A(s)
[\tilde\rho^\cB(y),\tilde\rho^\cB(\rlg(\alpha)_s)]A(s)^{-1}v.
\end{equation}
Finally we prove that $\frac{d}{ds}(K(s)\gamma(s))=0$ for every $s\in\R$.
We have
\begin{align}
\label{eqn-ks+hfinalapprox}
\notag \frac{1}{h}&\big(K(s+h)\gamma(s+h)-K(s)\gamma(s)\big)\\
&=K(s+h)\Big(\frac{1}{h}\big(\gamma(s+h)-\gamma(s)\big)\Big)+
\frac{1}{h}\big(K(s+h)\gamma(s)-K(s)\gamma(s)\big).
\end{align}
The first term in \eqref{eqn-ks+hfinalapprox} can be written as 
\[
K(s+h)\Big(\frac{1}{h}\big(\gamma(s+h)-\gamma(s)\big)-\gamma'(s)\Big)+K(s+h)\gamma'(s).
\]
Differentiability of the map $K^y$ implies that it is continuous. Thus, if 
$I$ is a compact interval containing $s$, 
then $\sup_{t\in I}\|K(t)y\|<\infty$. Since $\g$ is Fr\'echet, the Banach--Steinhaus Theorem
implies that $(K(t))_{t \in I}$ is equicontinuous. 
Since $\gamma'(s)=[\rlg(\alpha)_s,\Ad(\alpha(s))x]$,
it follows that, for $h\to 0$, the first term in 
\eqref{eqn-ks+hfinalapprox} converges to 
\[
K(s)\gamma'(s)=A(s)\tilde\rho^\cB([\rlg(\alpha)_s,\Ad(\alpha(s))x])
A(s)^{-1}v 
= A(s)\tilde\rho^\cB([\rlg(\alpha)_s,\gamma(s)])A(s)^{-1}v.
\]
By \eqref{derivofksx}, the second term in \eqref{eqn-ks+hfinalapprox} converges to
\[
A(s)[\tilde\rho^\cB(\gamma(s)),\tilde\rho^\cB(\rlg(\alpha)_s)]A(s)^{-1}v 
= A(s)\tilde\rho^\cB([\gamma(s),\rlg(\alpha)_s])A(s)^{-1}v.
\]
We conclude that $\frac{d}{ds}(K(s)\gamma(s))=0$.
\end{proof}

We can now prove the following theorem, which asserts 
that every pre-representation of a Lie supergroup 
corresponds to a unique unitary representation.

\begin{theorem}
\label{thm-stabili}
\textbf{\upshape(Stability Theorem)}
Let  $(\pi,\cH,\cB,\rho^\cB)$ be a  pre-representation 
of the Fr\'echet--Lie supergroup $(G,\g)$ and assume that 
$G$ has the Trotter property. 

{\rm(a)} There exists a unique linear map 
\[\rho^\pi:\g\to\End_\C(\cH^\infty)\]
such that 
$\rho^\pi(x)\big|_{\cB}=\rho^\cB(x)$ and
$(\pi,\rho^\pi,\cH)$ is a smooth unitary representation of $(G,\g)$. 

{\rm(b)} If the representation $(\pi,\cH)$ of $G$ is analytic, then  there exists a unique map 
\[\rho^\pi:\g\to\End_\C(\cH^\omega)\] 
such that 
$\overline{\rho^\pi(x)}\big|_{\cB}=\rho^\cB(x)$ and
$(\pi,\rho^\pi,\cH)$ is an analytic unitary representation of $(G,\g)$. 
\end{theorem}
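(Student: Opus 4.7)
The plan is to set $\rho^\pi(x) := \tilde\rho^\cB(x)\big|_{\cH^\infty}$ for part~(a) and $\rho^\pi(x) := \tilde\rho^\cB(x)\big|_{\cH^\omega}$ for part~(b), and then to verify that the axioms (SR1)--(SR5) of Definition~\ref{defi-smoothanalytic} hold together with uniqueness. The enabling observation is that, since $\pi$ is smooth and $G$ is a Fr\'echet--Lie group with the Trotter property, Theorem~\ref{thm:2.1} identifies $\cH^\infty$ with $\cD^\infty$. Consequently Proposition~\ref{smooth-preserve}(i) guarantees $\tilde\rho^\cB(\g^\C)\cH^\infty \subseteq \cH^\infty$, so that $\rho^\pi$ is a well-defined map into $\End_\C(\cH^\infty)$, and Proposition~\ref{smooth-preserve}(ii) gives its linearity together with the superbracket relation, i.e.\ axiom (SR2).

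Axiom (SR3) is immediate from the definition of $\tilde\rho^\cB$ on the even part combined with Lemma~\ref{lem-extension-of-operators}(i). For (SR4), fix $x \in \g_\ood$: Lemma~\ref{lem-extension-of-operators}(ii) says that $e^{-\pi i/4}\rho^\cB(x)$ is essentially self-adjoint, hence $e^{-\pi i/4}\overline{\rho^\cB(x)}$ is self-adjoint, and $\rho^\pi(x)$, being the restriction of $\overline{\rho^\cB(x)}$ to $\cH^\infty$, inherits the required symmetry.

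The most delicate axiom is (SR5). For $g$ in the identity component $G^\circ$, which itself has the Trotter property since the property is local, I would invoke Proposition~\ref{proposition-conjugacy} directly to obtain the identity $\pi(g)\rho^\pi(x)\pi(g)^{-1} = \rho^\pi(\Ad(g)x)$ on $\cH^\infty$. For a coset representative $g$ outside $G^\circ$ supplied by (PR6), both operators $P_1 := \pi(g)\rho^\pi(x)\pi(g)^{-1}$ and $P_2 := \rho^\pi(\Ad(g)x)$ have common domain $\cH^\infty$ (since $\pi(g)^{\pm 1}$ preserve $\cH^\infty$), their multiples by $e^{-\pi i/4}$ are symmetric, and by (PR6) they agree on the dense subspace $\cB$, on which $e^{-\pi i/4}\rho^\cB(\Ad(g)x)$ is essentially self-adjoint by Lemma~\ref{lem-extension-of-operators}(ii). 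Lemma~\ref{lem-two-oper} then forces $P_1 = P_2$. Uniqueness in (a) follows by exactly the same application of Lemma~\ref{lem-two-oper} to any two candidates, since they coincide on $\cB$ and are both symmetric extensions of an essentially self-adjoint operator.

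Part~(b) is handled by the same template with $\cH^\omega$ in place of $\cH^\infty$. The principal new ingredient required is the invariance $\tilde\rho^\cB(\g^\C)\cH^\omega \subseteq \cH^\omega$, which can be established by imitating the inductive argument of Proposition~\ref{smooth-preserve}(i): for $x\in\g_\ood$ one passes from $v\in\cH^\omega$ to $\overline{\rho^\cB(x)}v$ using the identity $\overline{\rho^\cB(x)}^2 = \tfrac12\oline{\dd\pi}([x,x])$ of Lemma~\ref{lem-extension-of-operators}(ii) and the standard stability of $\cH^\omega$ under the even derived action of an analytic representation. The compatibility condition $\overline{\rho^\pi(x)}\big|_\cB = \rho^\cB(x)$ then holds because $\cH^\omega$ is a core for $\overline{\rho^\cB(x)}$ by a Nelson-type argument, and uniqueness and (SR1)--(SR5) are verified as in (a). The main obstacle I anticipate is precisely the analytic invariance step in (b), as it strengthens the smooth case quantitatively and demands control on radii of convergence of orbit maps, whereas everything else reduces by Theorem~\ref{thm:2.1}, Proposition~\ref{proposition-conjugacy} and Lemma~\ref{lem-two-oper} to arguments already present in the smooth setting.
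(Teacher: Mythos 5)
Your part (a) is essentially the paper's own argument: define $\rho^\pi(x)=\tilde\rho^\cB(x)\big|_{\cH^\infty}$, use Theorem~\ref{thm:2.1} and Proposition~\ref{smooth-preserve} for well-definedness, linearity and the bracket relation, obtain (SR5) and uniqueness from Lemma~\ref{lem-two-oper} applied with $\mathscr L=\cB$ and the essential self-adjointness supplied by Lemma~\ref{lem-extension-of-operators}(ii). (Your appeal to Proposition~\ref{proposition-conjugacy} for elements of $G^\circ$ is harmless but not needed for (SR5), since the trivial coset representative works for the identity component.)

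In part (b) there is a genuine gap, at exactly the point you flag. The crucial step is the invariance $\overline{\rho^\cB(y)}\,\cH^\omega\subeq\cH^\omega$ for $y\in\g_\ood$, and ``imitating the inductive argument of Proposition~\ref{smooth-preserve}(i)'' cannot deliver it: that induction only proves membership in the spaces $\cD^n$, i.e.\ in domains of iterated even operators, and this characterizes \emph{smooth} vectors solely because of Theorem~\ref{thm:2.1}; there is no analogous characterization of $\cH^\omega$ by iterated domains, and the identity $\overline{\rho^\cB(y)}^2=\tfrac12\oline{\dd\pi}([y,y])$ controls powers of a single operator, not the analyticity of the $G$-orbit map of $w:=\overline{\rho^\cB(y)}v$. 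The ``control on radii of convergence'' you mention is precisely the missing ingredient, and you do not supply it; note also that your subsequent claim that $\cH^\omega$ is a core for $\overline{\rho^\cB(x)}$ by a Nelson-type argument itself presupposes this invariance. The paper closes the step without estimates: by \cite[Thm.~5.2]{Ne11} it suffices to show that $g\mapsto\la\pi(g)w,w\ra$ is analytic, and writing
\[
\la\pi(g)w,w\ra=\la\overline{\rho^\cB(\Ad(g)y)}\pi(g)v,w\ra=-i\,\la\pi(g)v,\overline{\rho^\cB(\Ad(g)y)}w\ra,
\]
which uses the conjugacy relation of Proposition~\ref{proposition-conjugacy} (this is where that proposition really enters), reduces the claim to the analyticity of the orbit map of $v$ and of $g\mapsto\Ad(g)y$, together with the continuity of $z\mapsto\overline{\rho^\cB(z)}w$ from Proposition~\ref{smooth-preserve}(iii). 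So (a) stands as written, but (b) needs this matrix-coefficient criterion (or a genuine quantitative substitute) to establish the invariance of $\cH^\omega$; as proposed, that step would fail.
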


\begin{proof} First we recall from Theorem~\ref{thm:2.1} 
that the Trotter property of $G$ and the smoothness of the representation 
imply that $\cD^\infty = \cH^\infty$. 

(a)  To prove the existence of $\rho^\pi$, we set $\rho^\pi(x)=\tilde\rho^\cB(x)$ for every $x\in\g$. Proposition \ref{smooth-preserve} implies that 
$\tilde\rho^\cB(x)\in\End_\C(\cH^\infty)$ and 
$\tilde\rho^\cB$ is a representation of the Lie superalgebra~$\g$. 
To prove the conjugacy invariance relation 
of (SR5), for every element of $G/G^\circ$ we take a coset representative
$g\in G$ which satisfies the condition (PR6) and apply Lemma~\ref{lem-two-oper} with 
$P_1=e^{-\frac{\pi i}{4}}\pi(g)\tilde\rho^\cB(x)\pi(g)^{-1}$, 
$P_2=e^{-\frac{\pi i}{4}}\tilde\rho^\cB(\Ad(g)x)$, and $\mathscr L=\cB$.

To prove uniqueness, it suffices to show that, if $(\pi,\rho^\pi,\cH)$ is a 
smooth unitary representation such that for every $x\in\g$ we have
$\rho^\pi(x)\big|_{\cB}=\rho^\cB(x)$ 
then for every $x\in\g$ we have 
\begin{equation}
\label{eqn-rhopibigi}
\rho^\pi(x)\big|_{\cH^\infty}=\tilde\rho^\cB(x).
\end{equation}
It suffices to prove \eqref{eqn-rhopibigi} when $x$ is 
homogeneous. If $x\in\g_\ood$ then 
by Lemma \ref{lem-extension-of-operators}(ii) the operator 
$e^{-\frac{\pi i}{4}}\tilde\rho^\cB(x)\big|_\cB$ is essentially self-adjoint. Therefore
\eqref{eqn-rhopibigi} follows from setting 
$P_1=e^{-\frac{\pi i}{4}}\rho^\pi(x)\big|_{\cH^\infty}$, $P_2=e^{-\frac{\pi i}{4}}\tilde\rho^\cB(x)$ and 
$\mathscr L=\cB$ in Lemma~\ref{lem-two-oper}. The argument for $x\in\g_\eev$ is similar.

(b) The proof of uniqueness is similar to the one given for the smooth case. 
For the existence,  it remains to verify that if $(\pi,\rho^\pi,\cH)$
is the smooth representation obtained in (a), then $\rho^\pi(y)\cH^\omega\subseteq\cH^\omega$ for every $y\in\g_\ood$.
Let $y \in \g_\ood$, $v \in \cH^\omega$ and 
$w := \oline{\rho^\cB(y)}v$. We have to show that 
$w\in \cH^\omega$, and, in view of \cite[Thm.~5.2]{Ne11}, it suffices to verify that 
the function 
\begin{align*}
G \to \C\ ,\ 
g \mapsto \la \pi(g)w,w\ra 
\end{align*}
is analytic. Note that
\begin{align*}
\la \pi(g)w,w\ra
= \la \pi(g)\oline{\rho^\cB(y)}v, w \ra 
= \la \oline{\rho^\cB(\Ad(g)y)}\pi(g)v, w \ra 
= -i \la \pi(g)v, \oline{\rho^\cB(\Ad(g)y)}w \ra.
\end{align*}
The orbit map of $v$ is analytic, and 
by Proposition~\ref{smooth-preserve}(iii)
the linear map 
$\g \to \cH, z \mapsto \oline{\rho^\cB(z)}w$ 
is continuous. The assertion follows from
analyticity of the map $G \to \g\,,\, g \mapsto \Ad(g)y$. 
\end{proof}

{\bf Acknowledgments} We thank D.\ Beltita and H. Gl\"ockner for 
various remarks on earlier versions of this paper.

\appendix

\section{Some results on Lie groups of maps} 

The results of this appendix will be used in Appendix 
\ref{app:b}. Let $E$ be a locally convex space, $M$ be a 
smooth finite dimensional manifold (possibly with boundary) and $K$ be a Lie group (possibly infinite dimensional) with Lie algebra $\mathfrak k$. 
In the following we write $\Omega^1_{\cC^r}(M,E)$ for the space 
of $E$-valued $1$-forms on $M$ defining $\cC^r$-functions 
$TM \to E$. The space of $E$-valued smooth forms will be denoted by $\Omega^1(M,E)$. 

We endow $\Omega^1_{\cC^r}(M,E)$  
with the topology induced by the embedding 
\[
\Omega^1_{\cC^r}(M,E)\into\cC^r(TM,E),
\] 
where $TM$ is the tangent bundle and 
 $\cC^r(TM,E)$ is endowed with the compact open $\cC^r$-topology,
so that $\Omega^1_{\cC^r}(M,E)$ is a closed subspace of 
$\cC^r(TM,E)$. The space
$\Omega^1(M,E)$ is endowed with the topology induced by
the diagonal embedding 
\[
\Omega^1(M,E)\into\prod_{r=1}^\infty \Omega^1_{\cC^r}(M,E).
\]

\begin{lem} \mlabel{lem:smoothprop} 
Let $M$ be a compact smooth manifold (possibly with boundary) 
and $K$ a Lie group with Lie algebra~$\fk$. 
Then, for each $r \in \N_0 \cup \{\infty\}$, the action of the Lie group 
$\cC^r(M,K)$ on $\Omega^1_{\cC^r}(M,\fk)$ by 
$(g,\alpha) \mapsto \Ad(g)\alpha$ is smooth. 
\end{lem}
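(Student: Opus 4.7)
The plan is to reduce the smoothness of the action to the standard ``$\Omega$-Lemma'' / exponential law type results which say that, for a finite-dimensional manifold $X$ and a smooth map $F:N_1\to N_2$ between locally convex manifolds, the pushforward $F_*:\cC^r(X,N_1)\to\cC^r(X,N_2),\ f\mapsto F\circ f$, is smooth with respect to the compact open $\cC^r$-topologies, and pullback along a smooth map between finite-dimensional manifolds is continuous linear (hence smooth). These are established in \cite{GN12} / \cite{Gl02} under the hypotheses at hand.

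The key reinterpretation is the following. Let $\pi_M:TM\to M$ be the bundle projection. For $g\in\cC^r(M,K)$ and $\alpha\in\Omega^1_{\cC^r}(M,\fk)\subseteq\cC^r(TM,\fk)$, viewing $\alpha$ as a $\cC^r$-map $TM\to\fk$, the new $1$-form $\Ad(g)\alpha$ is nothing but the $\cC^r$-map
\[
TM\to\fk,\qquad v\mapsto \Ad\bigl(g(\pi_M(v))\bigr)\,\alpha(v).
\]
In particular the image is automatically fiberwise linear, so it again lies in the closed subspace $\Omega^1_{\cC^r}(M,\fk)\subseteq\cC^r(TM,\fk)$; it therefore suffices to prove smoothness of the action into $\cC^r(TM,\fk)$.

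I would then factor this map through
\[
\cC^r(M,K)\times\Omega^1_{\cC^r}(M,\fk)\ \stackrel{\Phi}{\longrightarrow}\ \cC^r(TM,K\times\fk)\ \stackrel{\Psi_*}{\longrightarrow}\ \cC^r(TM,\fk),
\]
where $\Phi(g,\alpha)(v):=\bigl(g(\pi_M(v)),\alpha(v)\bigr)$ and $\Psi:K\times\fk\to\fk,\ (k,x)\mapsto\Ad(k)x$, is the (smooth) adjoint action of $K$ on its Lie algebra. The map $\Phi$ further decomposes as the product of the pullback
$\pi_M^{\,*}:\cC^r(M,K)\to\cC^r(TM,K),\ g\mapsto g\circ\pi_M$,
with the continuous linear inclusion $\Omega^1_{\cC^r}(M,\fk)\hookrightarrow\cC^r(TM,\fk)$; both are smooth, so $\Phi$ is smooth. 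Since $\Psi$ is smooth, the pushforward $\Psi_*$ is smooth by the general pushforward lemma, and the composite $\Psi_*\circ\Phi$ is the action we started with.

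The main obstacle is a technical one: $TM$ is \emph{not} compact, so one has to verify that the pushforward-smoothness lemma is available on $\cC^r(TM,\,\cdot\,)$ with the compact open $\cC^r$-topology. The standard argument is local: working in a finite atlas of $M$ together with a trivialisation of $TM$ over each chart, $\cC^r(TM,K)$ and $\cC^r(TM,\fk)$ are covered by spaces of the form $\cC^r(U\times\R^n,\cdot)$, and smoothness of $\Psi_*$ is checked by computing difference quotients on compact sets inside such charts, reducing to the known compact case. A cleaner alternative is to use the identification $\Omega^1_{\cC^r}(M,\fk)\cong\Gamma^r(T^*M\otimes\fk)$ together with a finite trivialisation of $T^*M$ over compact charts $\overline U_i\subseteq M$, which turns the problem into the smoothness of the pointwise action $\cC^r(\overline U_i,K)\times\cC^r(\overline U_i,\fk^{\dim M})\to\cC^r(\overline U_i,\fk^{\dim M})$; this is the standard smoothness of the natural $\cC^r(M,K)$-action on $\cC^r(M,V)$ for a smooth $K$-module $V$, already used in the construction of gauge groups (see e.g.\ \cite[Sect.~IV.1]{Ne06}).
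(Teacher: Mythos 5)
Your ``cleaner alternative'' is in substance the paper's own proof: the paper covers $M$ by finitely many compact $d$-dimensional submanifolds with boundary $U_i$ whose interiors form an atlas, uses the resulting embedding $\Omega^1_{\cC^r}(M,\fk)\into\prod_i\cC^r(U_i,\fk)^d$, and then observes that the induced action of $\cC^r(M,K)$ on each factor $\cC^r(U_i,\fk)$ factors through the restriction morphism $\cC^r(M,K)\to\cC^r(U_i,K)$ and coincides with the adjoint action of the Lie group $\cC^r(U_i,K)$ on its own Lie algebra, which is smooth by general Lie theory. So that part of your proposal is correct and essentially identical in structure; the only cosmetic difference is that you invoke the smoothness of the pointwise action of $\cC^r(\overline U_i,K)$ on $\cC^r(\overline U_i,\fk^{\dim M})$ via a pushforward argument for the compact base $\overline U_i$, whereas the paper gets it for free from the smoothness of $\Ad$ for the mapping Lie group $\cC^r(U_i,K)$.

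Your primary route, through $\cC^r(TM,K\times\fk)$ and the pushforward $\Psi_*$, is where the real difficulty sits, and the caveat you mention is the crux rather than a technicality: $TM$ is not compact, so the pushforward lemma available in this setting (Lemma~\ref{lem:a.2}) does not apply, and, worse, $\cC^r(TM,K)$ with the compact open $\cC^r$-topology is in general not a Lie group or even a smooth manifold (for a non-compact source one cannot manufacture charts from charts of $K$; compare the paper's remark that $\Diff(X)$ need not be a Lie group for non-compact $X$), so the asserted smoothness of $\Phi$ and of $\Psi_*$ is not well posed as stated. Your proposed repair---checking difference quotients after trivializing $TM$ over a finite atlas of compact chart domains---in effect replaces $\cC^r(TM,\cdot)$ by the spaces $\cC^r(\overline U_i,\cdot)^d$, i.e.\ it collapses the first route into the second. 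The proposal is therefore sound provided the second route is taken as the actual proof and the first is regarded only as motivation.
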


\begin{prf} Assume that $d = \dim M$. 
Every covering $(U_i)_{i \in I}$ of $M$ by compact submanifolds with 
boundary, which are 
diffeomorphic to $d$-dimensional balls and whose interiors define an 
atlas, yields an embedding 
$$ \Omega^1_{\cC^r}(M,\fk) 
\into \prod_{i \in I} \Omega^1_{\cC^r}(U_i,\fk) 
\cong  \prod_{i \in I} \cC^r(U_i,\fk)^d. $$
Therefore it suffices to show that the action of $\cC^r(M,K)$ on each space $\cC^r(U_i,\fk)$, given by 
$$ (g,f) \mapsto \Ad(g)f = \sigma_{\Ad} \circ (g,f) $$
is smooth. This action factors through the Lie group morphisms 
$$ \cC^r(M,K) \to  \cC^r(U_i,K), $$ 
and for the Lie groups $G := \cC^r(U_i,K)$, it coincides with the adjoint action 
on $\L(G) \cong \cC^r(U_i,\fk)$, which is smooth. 
This proves the lemma. 
\end{prf} 

\begin{lem} \mlabel{lem:a.2} (\cite{GN12}) \mlabel{lem:smooth-compo} Let $E_1$ 
and $E_2$ be locally convex spaces, $U_1 \subeq E_1$ open, 
$M$ a compact smooth manifold (possibly with boundary),  and 
$\phi \: U_1 \to E_2$ be a smooth map. Then, for each $r \in 
\N \cup \{\infty\}$, the map 
$$ \phi_* \: \cC^r(M,U_1) \to \cC^r(M,E_2), \quad f \mapsto \phi \circ f $$
is smooth. 
\end{lem}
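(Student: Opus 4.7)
The plan is to reduce to a local Euclidean situation using compactness of $M$ and then to establish smoothness by identifying all iterated directional derivatives of $\phi_*$ as pushforwards by the higher differentials of $\phi$.

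First, I would cover $M$ by a finite atlas $(V_i, \psi_i)_{i=1}^N$ with $\overline{V_i}$ compact and $\psi_i(\overline{V_i})$ a compact subset (with nonempty interior) of a Euclidean half-space. Since the topology on $\cC^r(M, F)$ is initial with respect to the restriction maps to these charts, smoothness of $\phi_*$ reduces to smoothness of the analogous composition operator $g \mapsto \phi \circ g$ from $\cC^r(\psi_i(\overline{V_i}), U_1)$ to $\cC^r(\psi_i(\overline{V_i}), E_2)$ for each $i$.

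Second, for fixed $f \in \cC^r(M, U_1)$ and $h \in \cC^r(M, E_1)$, Taylor's formula applied pointwise yields
\begin{equation*}
\frac{\phi(f(m) + t h(m)) - \phi(f(m))}{t} = \int_0^1 d\phi\bigl(f(m) + s t h(m)\bigr)(h(m))\, ds.
\end{equation*}
Since $f(M)$ is a compact subset of $U_1$, it admits a compact neighborhood $L \subseteq U_1$ on which $\phi$ and all of its derivatives are uniformly controlled. For $|t|$ small enough, the integrand together with all of its partial derivatives of order $\leq r$ in $m$ converges uniformly in $(m, s)$ to $d\phi(f(m))(h(m))$. This shows that $d(\phi_*)(f)(h)$ exists in $\cC^r(M, E_2)$ and equals the pushforward of the smooth map $d\phi: U_1 \times E_1 \to E_2$ applied to $(f, h)$. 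Iterating this identification, the $j$-th directional derivative is given by
\begin{equation*}
d^j(\phi_*)(f)(h_1, \ldots, h_j)(m) = d^j\phi(f(m))(h_1(m), \ldots, h_j(m)),
\end{equation*}
which is again a composition operator of the same form. Hence by induction on $j \leq k$, $\phi_*$ is $\cC^k$ for every $k \in \N \cup \{\infty\}$.

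The main obstacle is establishing joint continuity of the composition operator $(g, k_1, \ldots, k_j) \mapsto d^j\phi \circ (g, k_1, \ldots, k_j)$ on $\cC^r(M, U_1) \times \cC^r(M, E_1)^j$ in the compact-open $\cC^r$-topology. This requires expressing $r$-th order derivatives of $\phi \circ g$ via a Fa\`a di Bruno type formula and bounding each summand in terms of the uniform continuity of the derivatives of $\phi$ on a compact neighborhood of $g(M)$. This technical verification is exactly the content of the results in \cite{GN12} that are being invoked.
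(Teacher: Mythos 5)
The paper itself offers no proof of this lemma: it is quoted with a bare citation to \cite{GN12}, so there is no internal argument to compare yours with. Your outline is the standard ``Omega-lemma'' proof of smoothness of superposition operators (localize over a finite atlas, compute the directional derivative of $\phi_*$ pointwise from the fundamental theorem of calculus, and identify the iterated derivatives as $d^j(\phi_*)(f)(h_1,\ldots,h_j)=d^j\phi\circ(f,h_1,\ldots,h_j)$, which is again an operator of the same type), and that skeleton is the right one; note only that the chart reduction needs the remark that $\cC^r(M,E_2)$ sits as a \emph{closed} subspace of the product of the chart spaces, so that candidate derivatives computed chartwise actually lie in it.

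There are, however, two concrete problems. First, the assertion that the compact set $f(M)\subeq U_1$ ``admits a compact neighborhood $L\subeq U_1$ on which $\phi$ and all of its derivatives are uniformly controlled'' is false whenever $E_1$ is infinite dimensional: compact subsets of an infinite-dimensional locally convex space have empty interior, so no compact set is a neighborhood of anything. What the argument actually requires is (i) a $0$-neighborhood $W\subeq E_1$ with $f(M)+W\subeq U_1$, obtained from compactness of $f(M)$, which guarantees $f(m)+sth(m)\in U_1$ for small $t$, and (ii) uniform continuity of the continuous maps $d^j\phi$ on \emph{compact} subsets of $U_1\times E_1^j$ (exactly in the spirit of Lemma~\ref{lem:2.3}), applied to the compact sets built from $f(M)$, the $h_i(M)$ and $[0,1]$; the uniform convergence of the difference quotients and of their derivatives in $m$ has to come from this, not from a nonexistent compact neighborhood. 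Second, your closing step hands the crux of the matter --- joint continuity of $(g,k_1,\ldots,k_j)\mapsto d^j\phi\circ(g,k_1,\ldots,k_j)$ in the compact-open $\cC^r$-topologies, i.e.\ the Fa\`a di Bruno/chain-rule bookkeeping combined with the compactness estimates --- back to \cite{GN12}, which is the very reference the lemma is citing; as a standalone proof this is circular, and it is precisely where all the analytic work lies. So the strategy is correct, but one auxiliary claim is false as stated and the quantitative heart of the proof is not actually supplied.
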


\begin{lem} \mlabel{lem:smoothprop2} 
Let $U$ be an open subset of a locally convex space $E$, 
$F$ a locally convex space, 
$M$ a compact manifold (possibly with boundary), $r \in \N \cup \{ \infty\}$, and 
$\alpha \in \Omega^1(U,F)$. Then the map 
$$ \cC^r(M,U) \to \Omega^1_{\cC^{r-1}}(M,F), \quad 
f \mapsto f^*\alpha $$
is smooth if $\cC^r(M,U)$ is considered as an open subset of $\cC^r(M,E)$. 
\end{lem}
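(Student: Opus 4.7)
The plan is to reduce the claim to Lemma~\ref{lem:smooth-compo} by localizing on $M$. The starting observation is that a smooth $F$-valued $1$-form $\alpha \in \Omega^1(U,F)$ is the same datum as a smooth map
$\tilde\alpha \: U \times E \to F$, $(x,v) \mapsto \alpha_x(v)$,
which is linear in the second variable, and under this identification the pullback is simply $(f^*\alpha)_m(v) = \tilde\alpha(f(m), df(m)v)$.

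Next I would fix a finite cover of $M$ by compact submanifolds (with boundary) $U_1,\ldots,U_N$ diffeomorphic to closed $d$-balls ($d = \dim M$) over which $TM$ trivializes, with global coordinates $x^1,\ldots,x^d$ on each $U_i$. Using the continuous embedding
\[
\Omega^1_{\cC^{r-1}}(M,F) \hookrightarrow \prod_{i=1}^N \Omega^1_{\cC^{r-1}}(U_i,F) \cong \prod_{i=1}^N \cC^{r-1}(U_i,F)^d,
\]
it suffices to show, for each $i$ and each $j \in \{1,\ldots,d\}$, smoothness of the map
\[
\Phi_{i,j} \: \cC^r(M,U) \to \cC^{r-1}(U_i,F), \qquad f \mapsto \tilde\alpha \circ \bigl(f\res_{U_i},\, \partial_j(f\res_{U_i})\bigr).
\]

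The restriction $f \mapsto f\res_{U_i}$ and the partial derivative $f \mapsto \partial_j(f\res_{U_i})$ are continuous linear maps from $\cC^r(M,E)$ into $\cC^{r-1}(U_i,E)$, so the combined map
\[
\cC^r(M,U) \to \cC^{r-1}(U_i,\, U \times E), \qquad f \mapsto \bigl(f\res_{U_i},\, \partial_j(f\res_{U_i})\bigr)
\]
is continuous and affine, hence smooth (note that $U \times E$ is open in $E \times E$, so $\cC^{r-1}(U_i, U \times E)$ is open in $\cC^{r-1}(U_i, E \times E)$). Applying Lemma~\ref{lem:smooth-compo} with ambient space $E \times E$, open subset $U \times E$, smooth map $\tilde\alpha \: U \times E \to F$, and compact manifold $U_i$, the pushforward
$\tilde\alpha_* \: \cC^{r-1}(U_i, U \times E) \to \cC^{r-1}(U_i, F)$
is smooth. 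Composing the two smooth maps yields smoothness of $\Phi_{i,j}$.

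The main technical obstacle I anticipate is verifying that the embedding of $\Omega^1_{\cC^{r-1}}(M,F)$ into the finite product of local $1$-form spaces is actually a topological embedding. This should follow directly from the definition of the compact-open $\cC^{r-1}$-topology on $\cC^{r-1}(TM,F)$ (since compact subsets of $TM$ can be covered by finitely many of the trivializing charts $TU_i \cong U_i \times \R^d$ intersected with compact balls in the fiber) together with a standard partition-of-unity argument subordinate to the interiors of the $U_i$ to reconstruct a global form from its local restrictions.
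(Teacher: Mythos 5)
Your proof is correct, but it takes a genuinely different route from the paper's. The paper argues globally: it writes $f^*\alpha = \alpha \circ Tf$, observes that $f \mapsto Tf = (f \circ \pi, \dd f)$ is a restriction of continuous linear maps into $\cC^{r-1}(TM,U)\times\cC^{r-1}(TM,E)$, and applies Lemma~\ref{lem:smooth-compo} to the pushforward $\alpha_*$ on $\cC^{r-1}(TM,TU)$, using that $\Omega^1_{\cC^{r-1}}(M,F)$ is by definition a closed subspace of $\cC^{r-1}(TM,F)$ -- no chart decomposition and no product embedding are needed. You instead localize to finitely many compact ball-like pieces $U_i$, write the pullback in coordinates as $\tilde\alpha\circ\bigl(f\res_{U_i},\partial_j(f\res_{U_i})\bigr)$, and apply Lemma~\ref{lem:smooth-compo} on each $U_i$. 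What your version buys: the key lemma is invoked only for genuinely compact manifolds (the paper applies it with the non-compact $TM$ in the role of the compact manifold, strictly beyond the stated hypothesis), and your reduction is exactly the device already used in the proof of Lemma~\ref{lem:smoothprop}, so the embedding $\Omega^1_{\cC^{r-1}}(M,F)\into\prod_i\cC^{r-1}(U_i,F)^d$ can simply be quoted from there. What it costs: you must justify that smoothness may be tested componentwise, i.e.\ that this embedding is topological with closed image; closedness follows because agreement on chart overlaps is a closed condition, and the initial-topology statement follows by covering a compact subset of $TM$ with finitely many $TU_i$ and using fiberwise linearity of $1$-forms -- your partition-of-unity remark is not actually needed, since injectivity is immediate from the interiors of the $U_i$ covering $M$ and no reconstruction of a global form from arbitrary local data is required. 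One shared edge case: for $r=1$ both your argument and the paper's apply Lemma~\ref{lem:smooth-compo} with exponent $r-1=0$, which its statement does not literally cover, though the $\cC^0$ case holds by the same reasoning.
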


\begin{prf} Let $\pi \: TM \to M$ denote the bundle projection. 
Then both components of the map 
\begin{eqnarray*}
 \cC^r(M,U) &\to& \cC^{r-1}(TM,TU) \cong \cC^{r-1}(TM,U) \times \cC^{r-1}(TM,E), \\
f &\mapsto & Tf = (f \circ \pi, \dd f) 
\end{eqnarray*}
are restrictions of continuous linear maps, hence smooth. 
Since $f^*\alpha = \alpha \circ Tf$, 
smoothness of $\alpha$ and Lemma~\ref{lem:smooth-compo}
imply that
$$ \alpha_* \: \cC^{r-1}(TM,TU) \to \cC^{r-1}(TM, F), 
\quad h \mapsto \alpha \circ h 
$$
is smooth, from which the assertion follows  (recall that we topologize 
$\Omega^1_{\cC^{r-1}}(M,F)$ as a closed subspace of $\cC^{r-1}(TM,F)$).  
\end{prf}

\begin{prop} \mlabel{prop:smooth-logder0} For any Lie group $K$ with Lie algebra 
$\fk$, any compact manifold $M$ 
(possibly with boundary) and any $r \in \N \cup \{\infty\}$, the left logarithmic derivative 
$$ \delta \: \cC^r(M,K) \to  \Omega^1_{\cC^{r-1}}(M,\fk) $$ 
is a smooth map with respect to the Lie group structure on $\cC^r(M,K)$, 
and 
\begin{eqnarray}
  \label{eq:diff-delta}
T_\1(\delta) = \dd \: \cC^r(M,\fk) \to \Omega^1_{\cC^{r-1}}(M,\fk), \quad 
\xi \mapsto \dd \xi. 
\end{eqnarray}
\end{prop}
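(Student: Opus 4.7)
The strategy is to recognize $\delta$ as a pullback by the left Maurer--Cartan form $\kappa \in \Omega^1(K,\fk)$, where $\kappa_k := T_k\ell_{k^{-1}}$, so that $\delta(g) = g^*\kappa$ for every $\cC^r$-map $g\colon M\to K$. Smoothness of $\delta$ is a local property, so I will prove it on a neighborhood of an arbitrary $g_0\in\cC^r(M,K)$. The plan has two parts: first, reduce to smoothness in a neighborhood of the constant map $\1$ via left translation combined with the product rule for $\delta$; second, treat the neighborhood of $\1$ by pulling back to a chart of $K$ and invoking Lemma~\ref{lem:smoothprop2}.

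For the reduction step, the pointwise identity coming from the chain rule for the group multiplication $\mu\colon K\times K\to K$ yields the product rule
\[ \delta(fg) = \delta(g) + \Ad(g)^{-1}\delta(f), \qquad f,g\in\cC^r(M,K). \]
Setting $f=g_0$ and $g=g_0(g_0^{-1}g)$ gives
\[ \delta(g) = \delta(g_0^{-1}g) + \Ad(g^{-1}g_0)\delta(g_0). \]
The first summand is the composition of the smooth Lie group operation $g\mapsto g_0^{-1}g$ with $\delta$ evaluated in a neighborhood of $\1$, so it is smooth provided $\delta$ is smooth near $\1$. The second summand is smooth in $g$ because $\delta(g_0)$ is a fixed element of $\Omega^1_{\cC^{r-1}}(M,\fk)$ and the adjoint action
\[ \cC^r(M,K)\times\Omega^1_{\cC^{r-1}}(M,\fk)\to\Omega^1_{\cC^{r-1}}(M,\fk) \]
is smooth: this follows from Lemma~\ref{lem:smoothprop} applied at regularity $r-1$, composed with the smooth Lie group inclusion $\cC^r(M,K)\hookrightarrow\cC^{r-1}(M,K)$. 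For $r=\infty$, one applies this argument at each finite $s$ and uses that $\Omega^1(M,\fk)$ carries the projective limit topology.

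For smoothness near $\1$, choose a chart $\phi\colon U\to V\subseteq K$ of $K$ with $U\subseteq\fk$ open, $\phi(0)=\1$ and $T_0\phi=\id_\fk$. The corresponding chart of $\cC^r(M,K)$ near $\1$ is $\Phi\colon \cC^r(M,U)\to\cC^r(M,V)$, $\Phi(\tilde g):=\phi\circ\tilde g$, and in this chart
\[ (\delta\circ\Phi)(\tilde g) \;=\; (\phi\circ\tilde g)^*\kappa \;=\; \tilde g^*(\phi^*\kappa). \]
Since $\phi^*\kappa\in\Omega^1(U,\fk)$, Lemma~\ref{lem:smoothprop2} yields smoothness of $\tilde g\mapsto \tilde g^*(\phi^*\kappa)$ from $\cC^r(M,U)$ into $\Omega^1_{\cC^{r-1}}(M,\fk)$, completing the first assertion.

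To compute $T_\1\delta$, apply $\delta\circ\Phi$ along the curve $\tilde g_t:=t\xi\in\cC^r(M,U)$ for $\xi\in\cC^r(M,\fk)$ and small $t$. Writing $\omega:=\phi^*\kappa$, for $v\in T_xM$ we have
\[ (\tilde g_t^*\omega)(v) \;=\; \omega_{t\xi(x)}\bigl(t\,\dd\xi_x(v)\bigr) \;=\; t\,\omega_{t\xi(x)}\bigl(\dd\xi_x(v)\bigr). \]
Differentiating at $t=0$ and using $\omega_0=\kappa_\1\circ T_0\phi=\id_\fk$ gives $T_\1\delta(\xi)(v)=\dd\xi_x(v)$, which is precisely \eqref{eq:diff-delta}. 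The main technical obstacle is the bookkeeping of smoothness classes between domain ($\cC^r$) and codomain ($\cC^{r-1}$), together with the careful verification that the pointwise identity $\delta(g)=g^*\kappa$ and the product rule continue to hold for $\cC^r$-maps into a possibly infinite-dimensional Lie group; both are essentially routine once the pullback calculus of Lemmas~\ref{lem:a.2} and~\ref{lem:smoothprop2} is at one's disposal.
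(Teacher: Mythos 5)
Your proof is correct and follows essentially the same route as the paper: the product rule exhibits $\delta$ as a crossed homomorphism for the smooth adjoint action from Lemma~\ref{lem:smoothprop} (composed with the inclusion $\cC^r(M,K)\hookrightarrow\cC^{r-1}(M,K)$), reducing the problem to a neighborhood of $\1$, where a chart of $K$ together with Lemma~\ref{lem:smoothprop2} handles the pullback of the Maurer--Cartan form, and the derivative at $\1$ is computed pointwise exactly as in the paper. The only cosmetic differences are the direction of the chart convention and your (harmless, not strictly needed) extra remark about the $r=\infty$ case.
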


\begin{prf} From Lemma~\ref{lem:smoothprop}, we already know that the action 
of the Lie group $G := \cC^r(M,K)$ on $\Omega^1_{\cC^r}(M,\fk)$ by $f\cdot\alpha := \Ad(f)\alpha$ 
is smooth. Since the inclusion map 
$\cC^r(M,K) \to \cC^{r-1}(M,K)$ is a smooth morphism of Lie groups, the action of $G$ on 
on $\Omega^1_{\cC^{r-1}}(M,\fk)$ is also smooth. 

The product rule 
$\delta(\eta\gamma) 
= \delta(\gamma) + \Ad(\gamma)^{-1}\delta(\eta)$
means that $\delta$ is a right crossed homomorphism for the smooth action of 
$\cC^r(M,K)$ on $\Omega^1_{\cC^{r-1}}(M,\fk)$. 
It therefore suffices to verify its smoothness in a neighborhood 
of the identity. 

Let $(\phi_K, U_K)$ be a chart of an identity neighborhood of $K$ with 
$T_\1(\phi_K) = \id_\fk$, so 
that $(\phi_G,U_G)$ with 
$$ U_G := \lfloor M,U_K \rfloor := \{ \gamma \in G \: \gamma(M) \subeq U_K \}, 
\quad \phi_G(\gamma) := \phi_K \circ \gamma $$
is a chart of an identity neighborhood of the Lie group $G$. 
If $\kappa_K \in \Omega^1(K,\fk)$ denotes the left Maurer--Cartan form of $K$, then we have a map 
\begin{align*}
\cC^r(M, \phi_K(U_K)) &\to \Omega^1_{\cC^{r-1}}(M,\fk), \\ 
\xi &\mapsto \delta(\phi_K^{-1} \circ \xi) 
= (\phi_K^{-1} \circ \xi)^*\kappa_K 
= \xi^*(\phi_K^{-1})^*\kappa_K 
\end{align*}
whose smoothness follows from Lemma~\ref{lem:smoothprop2}. 

Set $\beta := (\phi_K^{-1})^*\kappa_K$. Then $\beta_0 = \id_\fk$
and (\ref{eq:diff-delta}) follows from the fact that 
for every $m \in M$ and $v \in T_m(M)$, we have 
\begin{align*}
\derat0 (t\xi)^*\beta v  
&= \derat0 \beta_{t\xi(m)}(t \dd\xi(m)v)
= \lim_{t \to 0} \beta_{t\xi(m)}\dd\xi(m)v
= \beta_0 \dd\xi(m)v 
= \dd\xi(m)v. 
\qedhere\end{align*}
\end{prf}

\section{$\cC^k$-regularity is an extension property} 
\mlabel{app:b}

In this appendix we generalize the result that regularity of 
Lie groups is an extension property to the stronger notion of
$\cC^k$-regularity for $k \in\N_0$. 

Throughout this section $I:=[0,1]$.
If $\gamma:(-\eps,\eps)\to G$ is a $\cC^1$ curve, then the left 
logarithmic derivative $\delta(\gamma):(-\eps,\eps)\to\g$ 
is defined by \[
\delta(\gamma)_t:=\dd\ell_{\gamma(t)^{-1}}(\gamma(t))\gamma'(t)
\] where $\ell_g:G\to G$ denotes the left translation $\ell_g(x):=gx$.

\begin{defn} \mlabel{def:regular0} 
Let $k \in \N_0 \cup \{\infty\}$. 
A Lie group $G$ with Lie algebra $\g$ is called {\it ${\cC^k}$-regular}, if for each
$\xi \in {\cC^k}(I,\g)$, the initial value problem 
\begin{eqnarray}
  \label{eq:lie-ivp}
\gamma(0) = \1, \quad \delta(\gamma) = \xi 
\end{eqnarray}
has a solution $\gamma_\xi$, which is then contained in 
$\cC^{k+1}(I,G)$, and the corresponding evolution map 
$$ \evol_G \: {\cC^k}(I,\g) \to G, \quad \xi \mapsto \gamma_\xi(1) $$
is smooth. The solutions of 
(\ref{eq:lie-ivp})  are unique whenever they exist (cf.\ \cite{Ne06}). 
If $G$ is ${\cC^k}$-regular, we write 
$$ \Evol_G \: {\cC^k}(I,\g) \to \cC^{k+1}(I,G), \quad \xi \mapsto \gamma_\xi $$
for the corresponding map on the level of Lie group-valued curves. 
This map is also smooth (cf.\ \cite[Thm. A]{Gl12}). 
The group $G$ is called {\it regular} if it is $C^\infty$-regular. 
\end{defn}

\begin{rem} \mlabel{rem:reg1a} (a) Any regular Lie group $G$ 
has a smooth  exponential function 
$$ \exp_G \: \g\to G \quad \hbox{ by } \quad \exp_G(x) := \gamma_x(1), $$
where $x \in \g$ is considered as a constant function $I \to \g$. 
As a restriction of the smooth function $\evol_G$ to the topological subspace 
$\g \subeq C^\infty(I,\g)$ of constant functions, 
the exponential function is smooth. 

(b) For $k \leq r$, the ${\cC^k}$-regularity of a Lie group $G$ implies its 
$\cC^r$-regularity because the inclusion map 
$\cC^r(I,\g) \to {\cC^k}(I,\g)$ is continuous linear, hence smooth. 
\end{rem}

\begin{lem}\mlabel{lem.e.2} 
Let $G$ be a Lie group with Lie algebra~$\g$. 
Then the prescription 
$$ \xi * \gamma := \delta(\gamma) + \Ad(\gamma)^{-1}\xi$$
defines a smooth affine right action of the group $\cC^{k+1}(I,G)$ on 
${\cC^k}(I,\g)$. 
\end{lem}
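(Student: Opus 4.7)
The plan is to verify the action axioms by direct computation and then to deduce smoothness from the preparatory material of Appendix~A. The identity axiom $\xi*\1 = \xi$ is immediate from $\delta(\1) = 0$ and $\Ad(\1) = \id$. Associativity $(\xi*\gamma_1)*\gamma_2 = \xi*(\gamma_1\gamma_2)$ follows from the product rule
\[
\delta(\gamma_1\gamma_2) = \delta(\gamma_2) + \Ad(\gamma_2)^{-1}\delta(\gamma_1),
\]
which was used already in the proof of Theorem~\ref{thm:3.15}: both sides expand to $\delta(\gamma_2) + \Ad(\gamma_2)^{-1}\delta(\gamma_1) + \Ad(\gamma_1\gamma_2)^{-1}\xi$. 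Affineness in $\xi$ is obvious from the defining formula, so only smoothness remains.

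For smoothness, I would split the map $(\xi,\gamma)\mapsto\xi*\gamma$ into the $\xi$-independent summand $\gamma\mapsto\delta(\gamma)$ and the $\gamma$-twisted linear summand $(\xi,\gamma)\mapsto\Ad(\gamma)^{-1}\xi$, and treat each separately. For the first summand, Proposition~\ref{prop:smooth-logder0} applied to $M = I$ yields smoothness of
\[
\delta \colon \cC^{k+1}(I,G)\to\Omega^1_{\cC^k}(I,\g);
\]
since $I$ is one-dimensional, evaluation against the canonical vector field $\partial_t$ is a topological isomorphism $\Omega^1_{\cC^k}(I,\g)\cong\cC^k(I,\g)$, so $\delta$ lands smoothly in $\cC^k(I,\g)$. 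For the second summand, I would first compose with the smooth group inversion on $\cC^{k+1}(I,G)$, then with the smooth Lie-group inclusion $\cC^{k+1}(I,G)\hookrightarrow\cC^k(I,G)$, and finally invoke the argument of Lemma~\ref{lem:smoothprop}: its proof, via a finite atlas of $M$, in fact establishes smoothness of the pointwise adjoint action of $\cC^k(M,K)$ on the function spaces $\cC^k(U_i,\fk)$, which is the version required here (the $1$-form statement of the lemma is deduced from this function-space version). Adding the two smooth pieces gives smoothness of $(\xi,\gamma)\mapsto\xi*\gamma$.

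The only mild obstacle is the regularity mismatch: $\gamma$ is at level $k+1$ while $\xi$ is only at level $k$. This is bridged by the smooth Lie-group inclusion $\cC^{k+1}(I,G)\hookrightarrow\cC^k(I,G)$, which allows one to reduce both factors of the adjoint action to the same regularity $k$, at which point the cited material applies directly.
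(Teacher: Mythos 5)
Your proposal is correct and follows essentially the same route as the paper: the action identity is checked with the same product-rule computation, and smoothness is obtained from Proposition~\ref{prop:smooth-logder0} together with Lemma~\ref{lem:smoothprop} after identifying $\Omega^1_{\cC^k}(I,\g)$ with $\cC^k(I,\g)$. Your extra care about the regularity mismatch (bridging $\cC^{k+1}(I,G)\hookrightarrow\cC^k(I,G)$ before applying the adjoint action) is a detail the paper leaves implicit, but it is the same argument.
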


\begin{prf} That we have an action follows from 
\[ (\xi * \gamma_1) * \gamma_2 
= \delta(\gamma_2) + \Ad(\gamma_2)^{-1}(\delta(\gamma_1) + \Ad(\gamma_1)^{-1}\xi) \\
= \delta(\gamma_1\gamma_2) + \Ad(\gamma_1 \gamma_2)^{-1}\xi = \xi * (\gamma_1 \gamma_2). 
\] 

Since $I$ is a compact manifold with boundary, 
the smoothness of the action follows from the smoothness of 
$\delta$ (see Proposition~\ref{prop:smooth-logder0}) and Lemma~\ref{lem:smoothprop} (note that we can identify $\Omega_{\cC^k}(I,\g)$ with
$\cC^k(I,\g)$). 
\end{prf} 

\begin{rem} \mlabel{rem:7.5.7} If $\xi = \delta(\eta)$ for some smooth function 
$h \: M \to G$, then the Product Rule implies that 
$\delta(\eta\gamma) = \xi * \gamma,$ 
so that the action from above corresponds to the right multiplication action 
on the level of group-valued functions. 
\end{rem}

\begin{lem} \mlabel{lem:reg-crit} 
{\rm(Local regularity criterion)} 
Let $G$ be a Lie group with 
Lie algebra $\g$ and $k \in \N_0 \cup \{\infty\}$. Suppose that 
{\rm(\ref{eq:lie-ivp})} has a solution for each $\xi$ in an 
open $0$-neighborhood $U \subeq {\cC^k}(I,\g)$. 
Then it has a solution for each $\xi \in {\cC^k}(I,\g)$. 
If the evolution map 
$$ \evol_G \: {\cC^k}(I,\g) \to G, \quad \xi \mapsto \gamma_\xi(1) $$
is smooth in $U$, then it is smooth on all of ${\cC^k}(I,\g)$. 
\end{lem}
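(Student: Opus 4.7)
The plan is to leverage the local hypothesis on $U$ through a subdivision argument: chop $I=[0,1]$ into $N$ equal subintervals, rescale $\xi$ on each piece so that it shrinks into $U$, solve there using the hypothesis, and paste the resulting pieces together by left translation.

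Concretely, for $\xi\in \cC^k(I,\g)$ and $N\in\N$ I would introduce for $j=0,\dots,N-1$ the rescaled translates
\[ \xi_j^N(s) := \tfrac{1}{N}\,\xi\!\Big(\tfrac{j+s}{N}\Big), \qquad s\in I, \]
for which $(\xi_j^N)^{(l)}(s)=N^{-(l+1)}\,\xi^{(l)}((j+s)/N)$. Hence every continuous seminorm of $\cC^k(I,\g)$ evaluated at $\xi_j^N$ is bounded by $N^{-1}$ times the corresponding seminorm of $\xi$, uniformly in $j$, so $\xi_j^N\to 0$ uniformly in $j$ as $N\to\infty$; by openness of $U$ one can pick $N=N(\xi)$ with $\xi_j^N\in U$ for all $j$. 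The hypothesis then furnishes $\gamma_j:=\Evol_G(\xi_j^N)\in \cC^{k+1}(I,G)$ with $\gamma_j(0)=\1$. Setting $\tilde\gamma_j(t):=\gamma_j(Nt-j)$ gives $\delta(\tilde\gamma_j)=\xi$ on $[j/N,(j+1)/N]$ with $\tilde\gamma_j(j/N)=\1$, and I would paste these by left translation,
\[ \gamma(t):=\tilde\gamma_0(1/N)\cdots \tilde\gamma_{j-1}(j/N)\cdot\tilde\gamma_j(t)\quad\text{for }t\in[j/N,(j+1)/N], \]
to obtain a continuous curve with $\gamma(0)=\1$ and $\delta(\gamma)=\xi$. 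The $\cC^{k+1}$-regularity of $\gamma$ at the paste points will then follow from $\delta(\gamma)=\xi\in \cC^k$ together with an ODE bootstrap based on $\gamma'(t)=\dd\ell_{\gamma(t)}(\1)\xi(t)$.

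For the smoothness of $\evol_G$, evaluating the pasted curve at $t=1$ produces the key product formula
\[ \evol_G(\xi)=\prod_{j=0}^{N-1}\evol_G(\xi_j^N). \]
Given $\xi_0\in \cC^k(I,\g)$, one chooses $N$ large enough that every $(\xi_0)_j^N$ lies in $U$; openness of $U$ together with continuity of $\xi\mapsto \xi_j^N$ yields a neighborhood $V$ of $\xi_0$ on which all $\xi_j^N$ remain in $U$ simultaneously. On $V$ each factor $\evol_G(\xi_j^N)$ is a smooth function of $\xi$ by the hypothesized smoothness of $\evol_G|_U$ precomposed with the continuous linear (hence smooth) maps $\xi\mapsto\xi_j^N$, and smoothness of multiplication in $G$ then yields smoothness of the whole product, i.e.\ of $\evol_G$ on $V$; since $\xi_0$ was arbitrary, $\evol_G$ is smooth on all of $\cC^k(I,\g)$.

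The only mildly delicate step is verifying the $\cC^{k+1}$-regularity of $\gamma$ across the paste points $t=j/N$, which will be handled via the ODE bootstrap above. The remaining ingredients — the seminorm estimates for the rescalings, the transformation rule for $\delta$ under affine reparametrization, and the derivation of the product formula — are purely computational.
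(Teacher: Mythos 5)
Your proposal is correct and follows essentially the same route as the paper: the same rescaled translates $\xi_j^N(s)=\frac1N\xi\big(\frac{j+s}{N}\big)$ with the uniform-in-$j$ seminorm estimate forcing them into $U$, the same pasting of the local solutions by left translation, and the same product formula $\evol_G(\xi)=\evol_G(\xi_0^N)\cdots\evol_G(\xi_{N-1}^N)$ combined with continuity and linearity of $\xi\mapsto\xi_j^N$ to get smoothness on a neighborhood of any given $\xi_0$. The only difference is cosmetic: you flag the $\cC^{k+1}$-smoothness of the pasted curve at the junction points as a point to bootstrap from $\delta(\gamma)=\xi$, which the paper simply records as an observation.
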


\begin{prf} Let $\xi \in \cC^k(I,\g)$. 
For $n \in \N$ and $i \in \{0,\ldots, n-1\}$ we define 
$$\xi_i^n \in \cC^k(I,\g) \quad \mbox{ by } \quad 
\xi_i^n(t) := \frac{1}{n}\xi\Big(\frac{i+t}{n}\Big). $$
Then for each $m \leq k$ we have  
$
\frac{d^m}{dt^m}\xi_i^n(t) = \frac{1}{n^{m+1}}\xi^{(m)}\Big(\frac{i+t}{n}\Big)
$.
We conclude that, for $n \to \infty$, the sequence 
$\frac{d^m}{dt^m}\xi_i^n$ tends to $0$ in $\cC^0(I,\g)$, uniformly in $i$, and hence that 
$\xi_i^n$ tends to $0$ in $\cC^k(I,\g)$, uniformly in $i$. 
In particular, there exists some $N \in \N$ for which 
$\xi_i^N \in U$ for $i = 0,1,\ldots, N-1$. 

We define a path $\gamma_\xi \: I \to G$ by 
$$ \gamma_\xi(t) := 
\gamma_{\xi_0^N}(1)\cdots \gamma_{\xi_{i-1}^N}(1)
\gamma_{\xi_i^N}(nt-i) 
\quad \mbox{ for } \quad \frac{i}{n} \leq t \leq \frac{i+1}{n} $$
and observe that $\delta(\gamma_\xi)$ exists on all of $I$ and 
equals $\xi$. We now put 
$$\Evol_G(\xi) := \gamma_\xi \quad \mbox{ and } \quad 
\evol_G(\xi) := \gamma_\xi(1). $$

Since, for each $i$, 
the assignment $\xi \mapsto \xi_i^N$ is linear and continuous, 
there exists an open neighborhood $V$ of $\xi$ such that 
$\eta_i^N \in U$ holds for each $\eta \in V$ and $i =0,1,\ldots, N-1$. 
It suffices to show that $\evol_G$ is smooth on $V$, 
but this follows from the fact that 
\[ \evol_G(\eta) = \evol_G(\eta_0^N) \cdots \evol_G(\eta_{N-1}^N)   \] 
is a product of $N$ smooth functions. 
\end{prf}

For the definition of an initial Lie subgroup see \cite[Def. II.6.1]{Ne06}.

\begin{prop} \mlabel{prop:reg-crit1} Let 
$G$ be a ${\cC^k}$-regular Lie group with Lie algebra $\g$ and 
$H \leq G$ an initial Lie subgroup with Lie algebra $\h \subeq \g$, for which 
there exists an open 
identity neighborhood $U \subeq G$ and a smooth function 
$f \: U \to E$ into some locally convex space $E$, such that 
$f$ is constant on $U \cap gH$ for each $g \in U$, and $H \cap U = f^{-1}(0) \cap U$. Then 
$H$ is $\cC^k$-regular.   
\end{prop}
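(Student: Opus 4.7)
My plan is to use the $\cC^k$-regularity of $G$ to solve the initial value problem in $G$ for $\xi\in\cC^k(I,\h)$, show that for sufficiently small $\xi$ the resulting curve $\gamma_\xi$ actually lies in $H$, and then combine the initial property of $H$ with the Local Regularity Criterion (Lemma~\ref{lem:reg-crit}) applied to the group $H$.

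The key geometric step is the observation that, for every $g\in U$ and every $v\in\h$,
\[ \dd f(g)\bigl(\dd\ell_g(\1)v\bigr)=0. \]
To prove this I would use that, since $H$ is an initial Lie subgroup with Lie algebra $\h$, there is a smooth curve $c\colon(-\eps,\eps)\to H$ with $c(0)=\1$ and $c'(0)=v$. For $t$ sufficiently small, $gc(t)\in U\cap gH$, so by hypothesis $f(gc(t))=f(g)$ is constant in $t$. Differentiating at $t=0$ yields the claim.

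Next I would pick an open $0$-neighborhood $W\subeq\cC^k(I,\g)$ such that $\Evol_G(\xi)(I)\subeq U$ for every $\xi\in W$. Such a $W$ exists because $\Evol_G\colon\cC^k(I,\g)\to\cC^{k+1}(I,G)$ is smooth (hence continuous), $\Evol_G(0)\equiv\1$, and the set $\{\gamma\in\cC^{k+1}(I,G)\colon\gamma(I)\subeq U\}$ is open already in the compact open topology. Set $W_\h:=W\cap\cC^k(I,\h)$, which is an open $0$-neighborhood in $\cC^k(I,\h)$. For $\xi\in W_\h$, the curve $\gamma_\xi:=\Evol_G(\xi)$ stays inside $U$ and satisfies $\gamma_\xi'(t)=\dd\ell_{\gamma_\xi(t)}(\1)\xi(t)$ with $\xi(t)\in\h$. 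The key observation then gives
\[ \frac{d}{dt}\,f(\gamma_\xi(t))=\dd f(\gamma_\xi(t))\bigl(\dd\ell_{\gamma_\xi(t)}(\1)\xi(t)\bigr)=0, \]
so $f\circ\gamma_\xi\equiv f(\1)=0$, whence $\gamma_\xi(I)\subeq f^{-1}(0)\cap U=H\cap U$.

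Finally, I would exploit that $H$ is an initial Lie subgroup: since the smooth map $\evol_G\res_{W_\h}\colon W_\h\to G$ takes values in $H$, it factors through a (unique) smooth map $\evol_H\colon W_\h\to H$. This solves the initial value problem in $H$ locally around $0$ and gives smoothness of $\evol_H$ on $W_\h$. Invoking Lemma~\ref{lem:reg-crit} applied to the Lie group $H$ then promotes local existence and smoothness to all of $\cC^k(I,\h)$, yielding $\cC^k$-regularity of $H$. The only step requiring any delicacy is the verification that $\gamma_\xi$ remains in the neighborhood $U$ long enough for the $f$-argument to apply throughout $I$; this is handled uniformly by shrinking $\xi$ to lie in $W_\h$, after which everything assembles from the initial property and Lemma~\ref{lem:reg-crit}.
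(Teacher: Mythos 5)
Your argument is correct, and its core coincides with the paper's: both proofs rest on the observation that for an $\h$-valued $\cC^k$-curve $\xi$ the $G$-evolution satisfies $(f\circ\gamma_\xi)'(t)=\dd f(\gamma_\xi(t))\bigl(\dd\ell_{\gamma_\xi(t)}(\1)\xi(t)\bigr)=0$, because $\dd\ell_g(\1)v$ for $v\in\h$ is the velocity of a curve in $U\cap gH$ on which $f$ is constant, so $\gamma_\xi$ stays in $f^{-1}(0)\cap U=H\cap U$; and both then invoke initiality of $H$ to corestrict the smooth evolution map to $H$. Where you diverge is the globalization step: the paper does not restrict to a neighborhood $W_\h$ at all, but shows directly that $\im(\gamma_\xi)\subeq H$ for \emph{every} $\xi\in\cC^k(I,\h)$ by applying the $f$-argument to the left-translated curves $t\mapsto\gamma_\xi(t_0)^{-1}\gamma_\xi(t)$ on small subintervals $[t_0,t_0+\eps]$, and then applies initiality once to the globally defined smooth map $\cC^k(I,\h)\to G$; you instead prove local existence and smoothness of $\evol_H$ on $W_\h$ and then call on Lemma~\ref{lem:reg-crit} applied to the Lie group $H$ to pass to all of $\cC^k(I,\h)$. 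Both routes are legitimate and of comparable length: your version reuses the already-proved local criterion instead of redoing a subdivision by hand (the subdivision is hidden inside the proof of Lemma~\ref{lem:reg-crit}), at the mild cost of having to check that $W_\h=W\cap\cC^k(I,\h)$ is open in $\cC^k(I,\h)$ (which holds since the inclusion $\h\into\g$ is continuous linear) and that the corestricted curves really solve the initial value problem in $H$ on the nose — a point which your write-up, like the paper's, delegates to initiality without further comment. The paper's subinterval trick keeps everything inside $G$ until the very last step, which is why it never needs the hypotheses of Lemma~\ref{lem:reg-crit} for $H$.
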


\begin{prf} 
The ${\cC^k}$-regularity of $G$ implies the existence of a smooth evolution map 
$$ \evol_H \: {\cC^k}(I,\fh) \to G, \quad \xi \mapsto 
\gamma_\xi(1), $$
and since $H$ is initial, it suffices to see that the range 
of this map lies in $H$. 

If $\xi \in {\cC^k}(I,\fh)$ such that 
$\im(\gamma_\xi) \subeq U$, then  for every $t\in I$, 
$$ (f \circ \gamma_\xi)'(t) 
= \dd f({\gamma_\xi(t)})\gamma_\xi'(t) = 0 $$
because $\gamma_\xi'(t) = \dd\ell_{\gamma_\xi(t)}(\1)\xi(t)$ is 
the derivative of a curve in the set $U \cap \gamma_\xi(t)H$, on which 
$f$ is constant. 
Therefore $f \circ \gamma_\xi$ is constant, which leads to 
$\im(\gamma_\xi) \subeq f^{-1}(f(\1)) = f^{-1}(0) = U \cap H$. 

If $\xi \in {\cC^k}(I,\fh)$ is arbitrary, we apply the 
preceding argument to the curves 
$t\mapsto \gamma_\xi(t_0)^{-1} \gamma_\xi(t)$
on sufficiently small intervals $[t_0, t_0 + \eps]$ and see that 
$\im(\gamma_\xi)$ is contained in~$H$. 
\end{prf}

\begin{thm} \mlabel{thm:reg-ext-prop} 
{\rm($\cC^k$-regularity is an extension property)} Let $q \: \hat G \to G$ 
be an extension of the Lie group $G$ by the Lie group $N$ and $k \in \N_0$. Then 
$\hat G$ is ${\cC^k}$-regular if and only if $N$ and $G$ are ${\cC^k}$-regular. 
\end{thm}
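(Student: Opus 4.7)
The plan is to prove both implications separately, and for both of them to reduce the problem to the local situation via Lemma~\ref{lem:reg-crit}. I will use throughout that the Lie group extension $q\:\hat G\to G$ is a locally trivial principal $N$-bundle, so it admits a local smooth section $\sigma\:V\to\hat G$ on some open identity neighborhood $V\subseteq G$ with $\sigma(\1)=\1$. Correspondingly the Lie algebra sequence $0\to\fn\to\hat\g\to\g\to 0$ is split as a sequence of topological vector spaces.

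For the ``only if'' direction, suppose $\hat G$ is $\cC^k$-regular. The subgroup $N$ is a normal Lie subgroup of $\hat G$ which is the preimage of $\{\1\}$ under $q$, so by Proposition~\ref{prop:reg-crit1} applied with $f=q$ on a suitable identity neighborhood, $N$ inherits $\cC^k$-regularity. For $G$, given $\xi\in\cC^k(I,\g)$, use the continuous linear splitting $\g\to\hat\g$ (obtained by differentiating $\sigma$ at $\1$) to lift $\xi$ to $\hat\xi\in\cC^k(I,\hat\g)$. Then $\gamma:=q\circ\Evol_{\hat G}(\hat\xi)$ is a $\cC^{k+1}$-curve satisfying $\gamma(0)=\1$ and $\delta(\gamma)=\L(q)\circ\delta(\Evol_{\hat G}(\hat\xi))=\L(q)\circ\hat\xi=\xi$. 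Smoothness of the resulting evolution $\evol_G$ follows because it is the composition of a continuous linear lift, the smooth map $\Evol_{\hat G}$, and postcomposition with $q$.

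The ``if'' direction is the main content. By Lemma~\ref{lem:reg-crit} it suffices to solve the evolution problem on an open $0$-neighborhood in $\cC^k(I,\hat\g)$ and to check smoothness of $\evol_{\hat G}$ there. Let
\[
U:=\{\hat\xi\in\cC^k(I,\hat\g)\:\Evol_G(\L(q)\circ\hat\xi)(I)\subseteq V\},
\]
which is open since $\Evol_G$ is smooth and $\hat\xi\mapsto\L(q)\circ\hat\xi$ is continuous linear. For $\hat\xi\in U$ set $\xi:=\L(q)\circ\hat\xi$, $\gamma:=\Evol_G(\xi)$, and $\eta:=\sigma\circ\gamma\in\cC^{k+1}(I,\hat G)$, which lifts $\gamma$. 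Then $\hat\xi-\delta(\eta)\in\cC^k(I,\hat\g)$ projects to $\xi-\delta(\gamma)=0$ under $\L(q)$, hence takes values in the ideal $\fn$. Since $\fn$ is $\Ad(\hat G)$-stable, the curve
\[
\zeta:=\Ad(\eta)\bigl(\hat\xi-\delta(\eta)\bigr)\in\cC^k(I,\fn)
\]
is well-defined. Using that $N$ is $\cC^k$-regular, put $\mu:=\Evol_N(\zeta)\in\cC^{k+1}(I,N)$ and define $\hat\gamma:=\mu\cdot\eta$. A direct computation with the product rule $\delta(\mu\eta)=\delta(\eta)+\Ad(\eta)^{-1}\delta(\mu)=\delta(\eta)+\Ad(\eta)^{-1}\zeta=\hat\xi$ shows that $\hat\gamma$ is the required evolution of $\hat\xi$ in $\hat G$.

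It remains to verify that the assignment $\hat\xi\mapsto\hat\gamma(1)$ is smooth on $U$. The chain $\hat\xi\mapsto\xi\mapsto\gamma\mapsto\eta$ is smooth by continuity-linearity of $\L(q)_*$, $\cC^k$-regularity of $G$, and smoothness of composition with $\sigma$ (cf.\ Lemma~\ref{lem:smooth-compo}). The passage $\eta\mapsto\delta(\eta)$ is smooth by Proposition~\ref{prop:smooth-logder0}, and the adjoint action of $\hat G$ on $\hat\g$ together with Lemma~\ref{lem:smoothprop} make $(\eta,\hat\xi)\mapsto\zeta$ smooth into $\cC^k(I,\fn)$. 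Finally $\cC^k$-regularity of $N$ makes $\zeta\mapsto\Evol_N(\zeta)$ smooth, and group multiplication in $\hat G$ is smooth, so $\hat\xi\mapsto\hat\gamma(1)=\mu(1)\cdot\eta(1)$ is smooth on $U$. The main obstacle I anticipate is bookkeeping: one must ensure that each operation preserves the correct differentiability class (losing at most one derivative on the curve but keeping the parameter dependence smooth), and that the smoothness results of Appendix~A, which are stated for mapping groups on compact manifolds, apply in the form needed for evaluating $\sigma$, $\delta$, and $\Ad$ pointwise along curves.
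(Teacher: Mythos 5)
Your argument is correct and follows essentially the same route as the paper: the same local section $\sigma$, the same $\fn$-valued curve $\Ad(\eta)\bigl(\hat\xi-\delta(\eta)\bigr)$ (the paper's $\xi*\Phi(\xi)^{-1}$), the product rule to check $\delta(\mu\eta)=\hat\xi$, the local criterion of Lemma~\ref{lem:reg-crit}, and Proposition~\ref{prop:reg-crit1} together with a continuous linear splitting of $\L(q)$ for the converse. The only cosmetic adjustment is that Proposition~\ref{prop:reg-crit1} requires a map into a locally convex space, so for the $\cC^k$-regularity of $N$ one should take $f=\phi_G\circ q$ for a chart $\phi_G$ of $G$ rather than $f=q$ itself, exactly as in the paper.
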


\begin{prf} 
\textbf{Step 1.}
We assume that $G$ and $N$ are $\cC^k$-regular and show that this implies 
the $\cC^k$-regularity of $\hat G$. 
Since $G$ is $\cC^k$-regular, the evolution map 
$$\Evol_G \: {\cC^k}(I,\g) \to \cC^{k+1}_*(I,G)$$ 
is smooth \cite[Thm A]{Gl12}. 

Let $U_G \subeq G$ be an open $\1$-neighborhood for which we have a smooth 
section $\sigma \: U_G \to \hat G$ with $\sigma(\1_G) = \1_{\hat G}$ and 
$$ p \: {\cC^k}(I,\hat\g) \to {\cC^k}(I,\g), \quad \xi \mapsto q_\g \circ \xi $$
be the projection map. Then 
$ V := p^{-1}(\Evol_G^{-1}(\cC^{k+1}(I,U_G))) $
is an open $0$-neighborhood in ${\cC^k}(I,\hat \g)$. 
Further, by Lemma~\ref{lem:a.2}
the map 
$$ \Phi \: V \to \cC^{k+1}_*(I,\hat G), \quad \xi \mapsto \sigma \circ \Evol_G(q_\g \circ \xi) $$
is smooth. For $\xi \in {\cC^k}(I,\hat\g)$, we find 
\begin{align*}
&\quad q_\g(\xi * \Phi(\xi)^{-1}) 
= q_\g\Big(\Ad(\Phi(\xi))(\xi - \delta(\Phi(\xi)))\Big) \\
&= \Ad(q_G(\Phi(\xi)))\Big( q_\g \circ \xi - \delta(q_G(\Phi(\xi)))\Big) 
= \Ad(q_G(\Phi(\xi)))\big( q_\g \circ \xi - q_\g \circ \xi\big) = 0.
\end{align*}
This means that $\xi * \Phi(\xi)^{-1} \in {\cC^k}(I,\fn)$. 
Now Lemma~\ref{lem.e.2}, applied to the action of $\cC^{k+1}(I,\hat G)$ 
on $\Omega^1_{{\cC^k}}(I,\hat \g) \cong {\cC^k}(I,\hat\g)$, 
 shows that it depends smoothly on $\xi$. 
We thus obtain a smooth map 
$$ \widehat E \: V \to \hat G, \quad \widehat E(\xi) := 
\evol_N(\xi * \Phi(\xi)^{-1}) \cdot \Phi(\xi)(1). $$
The curve $\gamma := \Evol_N(\xi * \Phi(\xi)^{-1})\Phi(\xi)$ 
satisfies 
$$ \delta(\gamma) 
= \delta(\Phi(\xi)) + \Ad(\Phi(\xi))^{-1} 
\big(\delta(\Phi(\xi)^{-1}) + \Ad(\Phi(\xi))\xi\big) 
= \xi. $$
Therefore $\widehat E$ is a smooth local evolution map for $\hat G$, 
and Lemma~\ref{lem:reg-crit} implies that $\hat G$ is 
$\cC^k$-regular.

\textbf{Step 2.} Conversely, we show that 
the ${\cC^k}$-regularity of $\hat G$ implies the ${\cC^k}$-regularity of $N$ and~$G$. 
To see that $N$ is ${\cC^k}$-regular, we choose a chart 
$(\phi_G,U_G)$ of $G$ and consider the map 
$$ f := \phi_G \circ q \: q^{-1}(U_G) \to \phi_G(U_G), $$
which is constant on the left cosets of $N$, lying in this set. 
Therefore Proposition~\ref{prop:reg-crit1} implies that $N$ is ${\cC^k}$-regular because 
it is a submanifold of $\hat G$, hence in particular an initial submanifold. 

To see that $G$ is ${\cC^k}$-regular, we first choose a continuous linear section 
$\sigma \: \g \to \hat \g$, which induces a continuous linear section 
$$ \sigma_* \: {\cC^k}(I,\g) \to {\cC^k}(I,\hat\g), \quad 
\xi \mapsto \sigma \circ \xi. $$
Then, for each $\xi \in {\cC^k}([0,1],\g)$, the curve 
$\gamma_\xi := q \circ \Evol_{\hat G}(\sigma\circ \xi)$ satisfies 
$$ \delta(\gamma_\xi) = \L(q) \circ \sigma \circ \xi = \xi, $$
so that 
$\evol_G = q \circ \evol_{\hat G} \circ \sigma$ 
is a composition of smooth maps, hence smooth. 
\end{prf}

\end{document}